\documentclass{article}
\usepackage[a4paper]{geometry}
\usepackage{graphicx,amsmath,amsfonts,amsthm} 
\usepackage{xcolor} 
\usepackage{dsfont}
\usepackage{bm} 
\usepackage{enumitem} 
\usepackage{comment}
\usepackage{todonotes}
\theoremstyle{definition}
\newtheorem{definition}{Definition}

\newtheorem{proposition}{Proposition}
\newtheorem{lemma}{Lemma}
\newtheorem{assumption}{Assumption}
\newtheorem{remark}{Remark}
\newtheorem{theorem}{Theorem}

\newcommand{\bbbP}{{\bar{\mathbb{P}}}}

\newcommand{\Rr}{{\mathbb{R}}}

\newcommand{\Ee}{{\mathbb{E}}}
\newcommand{\E}{{\mathbb{E}}}

\newcommand{\rmx}{{\mathrm{x}}}

\newcommand{\bdelta}{{\bm{\delta}}}

\newcommand{\R}{\mathbb R}

\title{Fast and slow mean-field games}
\author{Roxana Dumitrescu\\ CREST, ENSAE, Institut Polytechnique de Paris \and  Julian Gutierrez Pineda\\ Division of engineering, New York University Abu Dhabi \and  Peter Tankov\footnote{Corresponding author, email: \texttt{peter.tankov@ensae.fr}} \\ CREST, ENSAE, Institut Polytechnique de Paris}
\date{}

\begin{document}

\maketitle


\begin{abstract}
\textcolor{black}{We propose a framework for constructing approximate Nash equilibria in mean-field games (MFG) with common noise based on a two-time-scale structure. In our model, the common noise is carried by a fast variable evolving under ergodic dynamics, while the slow variable is either optimally controlled or stopped. The fast variable enters the dynamics of the slow one through the drift coefficient. The key idea is to construct an approximation to the solution of the full MFG with common noise using an ``effective'' MFG without common noise, where the coefficients are averaged with respect to the stationary measure of the fast-scale process. We construct an explicit $\varepsilon$-MFG equilibrium for the full MFG from the equilibrium for the effective MFG with randomized control and stopping. To this end, we obtain new results on existence of MFG equilibria with randomized stopping. We rely on strong convergence results for two-scale diffusions under structural assumptions on the MFG, and show that the time-scale separation parameter controls the error in the Nash equilibrium condition.}
\end{abstract}

Key words: Mean-field games; common noise; averaging principle; two-time-scale
diffusions; approximate Nash equilibria; randomized optimal stopping;
relaxed controls.

MSC 2020 classification: 49N80

\section{Introduction}

\textcolor{black}{We introduce a framework for constructing approximate Nash equilibria in mean-field games with common noise without solving the full common-noise MFG. In contrast to the master equation approach, which characterizes equilibria through infinite-dimensional partial differential equations, our method reduces the original problem to an effective mean-field game without common noise. An equilibrium of the effective problem is then used to construct an approximate equilibrium for the original two-scale system. In particular, we establish the existence of strong randomized $\varepsilon$-equilibria when the time-scale separation parameter is sufficiently small.}

\textcolor{black}{Mean-field games with common noise model systems in which a large population of agents interacts while being jointly affected by a common source of uncertainty; see, among others, \cite{bassou2024mean,burzoni2023mean,carmona2015mean,dumitrescu2024energy,tangpi2024optimal}. The common noise may represent aggregate economic shocks, environmental conditions, or systemic fluctuations. Its presence, however, substantially complicates the analysis. The distribution of the representative agent becomes a random conditional law, while the value function and the equilibrium measure flow are intrinsically infinite-dimensional objects. Consequently, standard compactness and fixed-point arguments do not generally apply directly.}

\textcolor{black}{Many applications also involve dynamics evolving on distinct time scales. Long-term investment, capacity, or strategic decisions may interact with rapidly fluctuating operational, financial, or environmental factors. This motivates the study of multiscale MFGs with common noise, in which agents are simultaneously exposed to shared randomness and to a separation between slow and fast dynamics. The analytical difficulty is therefore twofold. Common noise makes the equilibrium distribution random, whereas time-scale separation requires controlling the interaction between the slow and fast variables. }

\textcolor{black}{In this paper, we focus on two-scale mean-field games with common noise, where the common noise enters only through the fast ergodic component. This is motivated by economic applications, where the fast scale models external factors, such as weather, which affect all firms at once, whereas the slow scale describes internal processes of the firm such as investments and productivity increases, which are less correlated among different companies. } \textcolor{black}{Stochastic averaging can then be used to eliminate the fast variable and to derive the effective dynamics for the slow component. This leads to an MFG without common noise, for which equilibria can be constructed using existing methods (e.g., the continuous-time linear programming approach).}

\textcolor{black}{We consider both optimal stopping and optimal control problems. In each setting, the representative agent has two state variables evolving on different time scales. The slow component $X^\delta$ describes the long-term evolution of the system, whereas the fast component $Y^\delta$ captures short-term fluctuations. The parameter $\delta>0$ quantifies the separation between the two time scales. The dynamics are given by
\[
\left\{
\begin{aligned}
    dX^\delta_t
    &=
    \widetilde b_t\,dt
    +
    \sigma(t,X^\delta_t)\,dW^{\mathrm{x}}_t,\\[0.2cm]
    dY^\delta_t
    &=
    \frac{1}{\delta}B(Y^\delta_t)\,dt
    +
    \frac{1}{\sqrt{\delta}}\Sigma(Y^\delta_t)\,dW^{\mathrm{y}}_t
    +
    \frac{1}{\sqrt{\delta}}\Sigma(Y^\delta_t)\,dW^{\mathrm{c}}_t.
\end{aligned}
\right.
\]
Here, $W^{\mathrm{x}}$ and $W^{\mathrm{y}}$ are idiosyncratic Brownian motions, while $W^{\mathrm{c}}$ is the common noise. The two scales are coupled through the drift of the slow component.}

\textcolor{black}{In the optimal stopping formulation,
\[
\widetilde b_t=b(t,X^\delta_t,Y^\delta_t),
\]
and the representative agent chooses a stopping time $\tau$ to optimize
\[
\mathbb{E}\left[
    \int_0^\tau
    f(t,X^\delta_t,Y^\delta_t,m^\delta_t)\,dt
    +
    g(\tau,X^\delta_\tau,\mu^\delta)
\right].
\]
Here, $m^\delta_t$ denotes the conditional distribution, given the common noise, of the states of the agents who remain active at time $t$, while $\mu^\delta$ is the joint conditional distribution of stopping times and stopped states.
In the optimal control formulation,
\[
\widetilde b_t=b(t,X^\delta_t,Y^\delta_t,\alpha_t),
\]
and the representative agent chooses a control process
$(\alpha_t)_{0\leq t\leq T}$ to optimize
\[
\mathbb{E}\left[
    \int_0^T
    f(t,X^\delta_t,Y^\delta_t,\alpha_t,m^\delta_t)\,dt
    +
    g(X^\delta_T,m^\delta_T)
\right],
\]
where $m^\delta_t$ denotes the conditional distribution of
$(X^\delta_t,Y^\delta_t)$ given the common noise. Thus, in both formulations, the equilibrium measure flow is random and adapted to the filtration generated by the common noise.}

\textcolor{black}{The effective problem is obtained by averaging the coefficients and objective functions with respect to the invariant distribution of the fast ergodic process. We first construct an equilibrium for this averaged MFG and then use the corresponding randomized strategy to define a candidate equilibrium for the original two-scale problem. Strong stochastic averaging estimates allow us to control the discrepancy between the original and effective state processes and, consequently, the loss of optimality under unilateral deviations.}

\textcolor{black}{Our analysis relies on the stochastic averaging frameworks of \cite{rockner2021strong} and \cite{rockner2021averaging}, which provide strong convergence estimates of orders $1/2$ and $1/6$, respectively, under suitable regularity and ergodicity assumptions. Strong convergence is particularly important in the game-theoretic setting, since convergence in distribution alone does not generally provide sufficient control of the payoff under unilateral deviations. These estimates allow us to translate the equilibrium property of the effective problem into an approximate Nash equilibrium condition for the original multiscale game.}

\textcolor{black}{A key structural assumption is that the volatility coefficient of the slow component $X^\delta$ does not depend on the fast variable $Y^\delta$. Example 4.1 in \cite{Liu2010} shows that this independence is, in general, necessary for strong convergence of the slow process. We also assume that the dynamics of the fast component are independent of the slow variable. This condition guarantees the regularity, and in particular the Lipschitz continuity, of the averaged coefficients required to compare the original and effective systems.}

\textcolor{black}{To establish the existence of equilibria for the effective MFG, we build on the continuous-time linear programming relaxation developed in \cite{1Bouveret,4LPApproach,5FP,dumitrescu2024energy}. The LP formulation replaces direct optimization over controls or stopping times with optimization over occupation measures satisfying linear constraints. It provides a robust existence theory, including in settings where equilibria in pure strategies may fail to exist; see \cite{bertucci2018optimal}, Proposition 1.3. However, LP solutions are formulated in terms of measure-valued objects and must be connected to admissible randomized strategies before they can be used to approximate the original stochastic game.}

\textcolor{black}{For this purpose, we introduce a notion of \textit{strong randomized equilibrium} adapted to the multiscale setting. By a strong randomized equilibrium, we mean a mean-field equilibrium expressed in terms of randomized strategies, for which the corresponding equilibrium measure flow is adapted to the filtration generated by the common noise. In the stopping problem, the representative agent uses a randomized stopping strategy, whereas in the control problem the agent uses a randomized control strategy. This formulation is related to the probabilistic equilibrium notions developed in \cite{carmona2018probabilistic,carmona2016mean,carmona2017mean}, while being specifically tailored to the approximation procedure considered in this paper. In particular, our argument does not require an independent existence result for an exact equilibrium of the full common-noise MFG.}

\textcolor{black}{The main contributions of the paper are the following:
\begin{itemize}
    \item We derive an effective MFG without common noise from the original two-scale problem. The reduction relies on the ergodicity of the fast component and on the fact that the common noise acts through this component. It provides a tractable alternative to solving the full common-noise MFG or the associated infinite-dimensional master equation.
\item We prove the existence of \textit{strong randomized $\varepsilon$-equilibria} for the original two-scale MFG. More precisely, for every $\varepsilon>0$, we show that an equilibrium of the effective MFG induces a strong randomized $\varepsilon$-equilibrium of the original problem whenever $\delta$ is sufficiently small. The Nash error is controlled quantitatively through the strong stochastic averaging estimates of \cite{rockner2021averaging,rockner2021strong} and converges to zero as $\delta\to0$. To the best of our knowledge, this provides a new quantitative construction of approximate equilibria for MFGs combining common noise and time-scale separation.
\item We establish a connection between randomized equilibria and the continuous-time LP formulation of MFGs. In particular, we show how solutions of the LP problem can be represented by randomized stopping or control strategies and interpreted as equilibria of the effective MFG. This representation provides the existence mechanism required for the subsequent approximation of the two-scale problem.
\item We develop two complementary methods for handling the dependence of the coefficients and payoffs on the fast-scale variable and on the associated occupation measures. When the fast component does not interact with the occupation measures, the required estimates are obtained through a time-discretization argument combined with the Poisson equation for the fast process. When such interactions are present, we combine the discretization procedure with the linear structure of the dynamics and the exponential ergodicity of the fast component.
\end{itemize}
Combining continuous-time linear programming with stochastic averaging therefore yields a constructive route from an equilibrium of the effective MFG to a strong randomized approximate equilibrium of the original common-noise problem. This strategy underlies the main existence results, Theorems \ref{thm:Existence 2S epsilon MFG eq} and \ref{relaxed1}, and provides quantitative control of the Nash error as the separation between the slow and fast time scales increases.}

As a potential application of our results, one could consider an extension of the electricity market industry dynamics model of \cite{3aid} allowing for common noise. In this model, two types of agents are present: renewable producers facing entry decisions, and conventional producers aiming to exit the market at the right time, and interacting through the market price of electricity. Entry and exit decisions are driven by long-term factors such as technology and labor costs, evolving on a \emph{slow scale}. In contrast, the revenues of renewable plants depend on short-term fluctuations in generation and demand, which evolve on a \emph{fast scale}. Because producers within the same market zone are exposed to common demand and correlated renewable output, modeling common noise is essential.

The remainder of the paper is organized as follows. Section \ref{sec:Notation} introduces the notation. Section \ref{Two-scale Mean-field game of optimal stopping} develops the optimal stopping framework, with assumptions in Section \ref{sec:OS Notation and Assump}, and presents the main result of this paper in the optimal stopping case (Theorem \ref{thm:Existence 2S epsilon MFG eq}). Section \ref{Sec:OS existence effective} proves the existence of randomized equilibria (Theorem \ref{thm:exist_effective}) and randomized approximate equilibria (Theorem \ref{thm:Existence 2S epsilon MFG eq randomized}). Section \ref{Sec:Control} turns to controlled diffusions, introducing the framework in Section \ref{Sec:Control intro} and proving Theorems \ref{relaxed1} and \ref{pro:epsilon Nash strong--c} in Section \ref{Sec:Control existence e-MFG}, with further results on the existence of equilibria for the effective problem in Section \ref{Sec:Control existence effective}. Technical material is collected in the Appendix.

\subsection*{Review of relevant literature}The combination of time-scale separation and common noise has not been previously addressed in the MFG literature. Related works on singular perturbed MFGs without noise include \cite{mendico2023singular,  cannarsa2025rate}, which consider MFGs with control of acceleration. 

For MFGs with common noise and controlled dynamics, \cite{ahuja2018asymptotic} studies small-noise asymptotics, showing that an approximate Nash equilibrium can be obtained by perturbing the control in the zero-noise limit (e.g., Theorem 5). We adopt from this work the notion of approximate equilibrium (see their Definition 2 and our Definitions \ref{def:Strong Nash2s--e--strict stop}, \ref{def:2S e--MFG eq with randomized stop}, and \ref{def:Strong Nash2s--strict c}). 
From the master equation perspective, \cite{meynard2024study} analyzes MFGs where in addition to an exogenous common noise, an endogenous common noise is introduced through an additional variable. The analysis relies on the notion of Lipschitz solutions to the master equation \cite{bertucci2024lipschitz}. In contrast, our construction circumvents solving the master equation by relying on an approximation via an MFG without common noise. 

Other approaches to studying MFGs with common noise and controlled dynamics include viscosity solutions frameworks in \cite{zhou2024viscosity} and strong solutions formulations without idiosyncratic noise in \cite{cardaliaguet2022first}. MFGs of optimal stopping with common noise have been less explored. \cite{he2023limit} studies stopped non--Markovian McKean--Vlasov state dynamics under common noise and shows propagation of chaos results. Some stylized models admit tractable analysis under reduced settings, such as finite state spaces or discrete time \cite{nutz2018mean, bassou2024mean, belak2021continuous, escribe2024mean}. 

Relaxation methods for optimal stopping MFGs without common noise have been studied through viscosity solutions on flows of joint marginal distributions \cite{talbi2023viscosity, talbi2024finite, talbi2023dynamic}, and through  state space enlargement in \cite{cosso2025mean}. Randomized equilibria with common noise generated by countable partitions are analyzed in \cite{ferrari2025existence}. Controlled martingale problem frameworks \cite{lacker2015mean} and MFGs of controls with interaction in both states and controls \cite{djete2023mean} provide existence and convergence results without common noise. In contrast to the previous approaches, the LP framework is directly amenable to numerical implementation, a feature crucial for our approach, which yields estimates that can be readily applied in computations. 

In the context of two-scale controlled diffusions, we refer to \cite{alvarez2002viscosity, alvarez2007multiscale, Bardi_2011, Bardi_2010}. Homogenization techniques studied in \cite{papanicolau1978asymptotic, ferreira2020two, cesaroni2016homogenization, cacace2018ergodicproblemmeanfield, lions2019homogenizationbackwardforwardmeanfieldgames} address space-scale separation of periodic structures, which do not apply to our ergodic time-scale separation setting. 

Strong convergence results for diffusions have been obtained through averaging techniques. The approach in \cite{rockner2021averaging}, based on the Poisson equation method (\cite{rockner2021diffusion, crisan2022poisson, brehier2020orders, rockner2019strong}), achieves strong convergence of order $1/2$. In contrast, \cite{rockner2021strong} employs the classical discretization method (\cite{Liu2010, cerrai2009khasminskii, has1966stochastic, li2008averaging, veretennikov1991averaging, wang2012average}), yielding strong convergence of order $1/3$ for two–scale McKean–Vlasov equations; under additional assumptions, this method also recovers order $1/2$.

Averaging techniques have also proved effective in other contexts, including jump–diffusions \cite{givon2007strong}, asymptotic expansions \cite{fouque2003multiscale, fouque2003singular, fouque2011multiscale, de2021averaging}, and PDE–based approaches \cite{PardouxVeretennikov2003, Khasminskii2004}. Furthermore, \cite{crisan2022poisson} extended Poisson equation methods to establish a uniform-in-time averaging theorem for SDE systems with superlinearly growing coefficients.

\subsection*{Notation}\label{sec:Notation} 

We fix a time horizon $T>0$. The slow scale process $X^\delta$ and the fast scale process $Y^\delta$ take values in $\mathbb R$. Finally, in the case of mean-field games of control, let $A$ be a convex compact subset of $\mathbb R$, where the control takes values.

We consider a probability space $(\Omega, \mathcal{F}, \mathbb{P})$ that supports independent scalar Brownian motions $W^{\text{x}}$, $W^{\text{y}}$, and $W^{\text{c}}$, as well as independent random variables $X_0 \sim m_0^X$ and $Y_0 \sim m_0^{Y}$, where $m_0^X$ and $m_0^Y$ are probability distributions on $\mathbb R$. We write
\[
\mathbb{F}^{\bm{W}} = (\mathcal{F}_t^{\bm{W}})_{0 \leq t \leq T}, \quad \mathbb{F}^{\text{x}}=(\mathcal{F}_t^{\text{x}})_{0 \leq t \leq T}, \quad \mathbb{F}^{\text{c}} = (\mathcal{F}_t^{\text{c}})_{0 \leq t \leq T}
\]
for the complete natural filtrations generated respectively by
\[
(X_0, Y_0, W_t^{\text{x}}, W_t^{\text{y}},W_t^{\text{c}})_{0 \leq t \leq T}, \quad (X_0,W^{\text{x}}_t)_{0\leq t \leq T}, \quad (Y_0, W_t^{\text{c}})_{0 \leq t \leq T}.
\]
For notational convenience, we include $Y_0$ in the common-noise filtration. Since $Y$ evolves on the fast time scale and is ergodic under our assumptions, the effect of its initial condition becomes negligible in the averaging limit $\delta \to 0$.

For any measurable function $\varphi$ and any finite measure $\nu$, we write:
\[
\langle \varphi , \nu \rangle := \int \varphi(y)\,\nu(dy).
\]
If $m$ is a measure on a product space $\mathbb R \times \mathbb R$, we denote by $m^{(\text{x})}$ its marginal on $\mathbb R$, that is, 
\[
 m^{(\text{x})}(dx) = \int_{\Rr} m(dx,dy).
\]
Similarly, if $m$ is a measure on a product space $\mathbb R \times \mathbb R \times A$, we denote by $m^{(\text{x},\text{y})}$ its marginal on $\mathbb R \times \mathbb R$, and by $m^{(\text{x})}$ its marginal on $\mathbb R$, that is, 
\[
m^{(\text{x},\text{y})}(dx,dy) = \int_A m(dx,dy,da), \qquad m^{(\text{x})}(dx) = \int_{\Rr \times A} m(dx,dy,da).
\]
Finally, if $\mu$ is a measure on a product space  $[0,T]\times\mathbb R \times \mathbb R $, we denote by $\mu^{(\text{t},\text{x})}$ its marginal on $[0,T]\times \mathbb R$:
\[
 \mu^{(\text{t},\text{x})}(dt,dx) = \int_{\Rr} \mu(dt,dx,dy).
\]

For a Borel set $\mathcal C$, we denote by $\mathcal P(\mathcal C)$ the set of probability measures on $(\mathcal C,\mathcal B(\mathcal C))$ and by  $\mathcal P_{\text{sub}}(\mathcal C)$ the set of subprobability measures (positive measures with total mass less or equal to $1$) on $(\mathcal C,\mathcal B(\mathcal C))$. When the set $\mathcal C$ is a subset of a normed space, we denote by $\mathcal P_q(\mathcal C)$ the set of probability measures on  $(\mathcal C,\mathcal B(\mathcal C))$ with finite $q$-moment. 

Further, we let $\mathcal{V}(\mathcal C)$ denote the set of measurable flows $(m_t)_{0 \le t \le T}$ of finite positive Borel measures on $\mathcal{C}$ such that, for every Borel set $B \subseteq \mathcal{C}$, the map $t \mapsto m_t(B)$ is measurable and
\[
\int_0^T m_t(\mathcal{C})\, dt < \infty.
\]
Flows are identified $dt$-almost everywhere on $[0,T]$.

We denote by $\mathbb V(\mathcal C)$ the set of $\mathbb F^{\text{c}}$-progressively measurable processes $(m_t)_{0 \le t \le T}$ with values in $\mathcal P_{\text{sub}}(\mathcal C)$ such that
\[
\int_0^T m_t(\mathcal{C})\, dt < \infty \quad \text{a.s.}
\]

Finally, we denote by $\mathbb M(\mathcal C)$ the set of $\mathcal F_T^{\text{c}}$-measurable random variables with values in $\mathcal P([0,T]\times \mathcal C)$. For $\mu \in \mathbb M(\mathcal C)$, we denote by $\mu^c_t$ its conditional expectation with respect to $\mathbb F^c$:
\begin{align}
\mu^c_t(B):= \mathbb E[\mu(B)|\mathcal F^c_t],\quad B\in \mathcal B([0,T]\times \mathcal C). \label{muproj}
\end{align}

\section{Two-scale Mean-field game of optimal stopping}
\label{Two-scale Mean-field game of optimal stopping}

In this section, we discuss the case of two-scale MFG of optimal stopping, and show that one can obtain the existence of an $\varepsilon$-equilibrium for such games from an equilibrium for the \textit{effective} MFG problem. 

\subsection{Model and assumptions}\label{sec:OS Notation and Assump}

We consider the following two-scale stochastic system in {$\mathbb R\times \mathbb{R}$}:
\begin{align}
\label{eq:dynamics 2S}
\begin{cases}
dX^\delta_t = b(t,X^\delta_t,Y^\delta_t) dt + \sigma (t,X^\delta_t) dW^{\text{x}}_t, \quad X^\delta_0 = X_0 \sim m_0^X,
\\
dY^\delta_t = \frac{1}{\delta} B(Y^\delta_t) dt + \frac{1}{\sqrt{\delta}}\Sigma (Y^\delta_t) dW^{\text{y}}_t + \frac{1}{\sqrt{\delta}}\hat{\Sigma} (Y^\delta_t) dW^{\text{c}}_t , \quad Y_0 \sim m^Y_0,
\end{cases} \quad 0 \leq t \leq T,
\end{align}
where $m_0^X \in \mathcal P(\mathbb R)$ and $m_0^Y \in \mathcal P(\mathbb R)$.

By making the natural time change $t \mapsto t \delta$, the process $(\Tilde{Y}^\delta_t):=(Y^\delta_{t \delta})$ has the same distribution as the solution of
\begin{align}
\label{eq:Fast system}
dY_t =  B(Y_t) dt + \Sigma (Y_t) dW_t + \hat{\Sigma} (Y_t) d\hat{W}_t,
\end{align}
where $(W_t,\hat{W}_t) = (\delta^{-1/2}W^{\text{y}}_{t \delta},\delta^{-1/2}W^{\text{c}}_{t \delta})$. We define the generators
\[
\mathcal{L}_Y(y) := B(y) \partial_y + \tfrac{1}{2}( \Sigma^2(y) + \hat{\Sigma}^2(y) ) \partial_{yy}, \quad \mathcal{L}_X(t,x,y) = b(t,x,y)\partial_x + \tfrac{1}{2} \sigma^2 (t,x) \partial_{xx}.
\]
Under appropriate assumptions on $B$, $\Sigma$ and $\hat{\Sigma}$, the process \eqref{eq:Fast system} admits a unique invariant measure $\bar{\mu}$.

In the sequel, we shall work under one of the two alternative regimes, which correspond to two different techniques for obtaining convergence rates in the main proof: the Poisson equation regime and the explicit Ornstein-Uhlenbeck (OU) regime. These regimes comprise assumptions on the process coefficients (part 1) and the reward functionals (part 2). 
\begin{assumption}[Poisson equation regime, part 1]
\label{hyp:Dynamics_Poisson_X_Y}
\leavevmode
\begin{enumerate}[label=(\roman*)] 
    \item \label{hyp:RocknerAG2} There exists a constant $\lambda>0$ such that for all $x\in \mathbb R$, and $t\in[0,T]$, $\tfrac{1}{\lambda} \leq \sigma^2(t,x)\leq \lambda$

    \item \label{hyp:slow coeff-stopping}
    $b(\cdot,\cdot,y) \in C^{1/2,1}_b([0,T]\times \mathbb R)$ for all $y\in \Rr$, $\sigma \in C^{1/2,1}_b([0,T]\times \mathbb R)$ and there exists $C>0$ such that
    \begin{align*}
    & |b(t,x,y)-b(t,x',y')| \leq C(|x-x'|+|y-y'|), \quad (t,x,x',y,y') \in [0,T]\times \mathbb R^2 \times \Rr^2,
    \end{align*}
    and $m_0^X \in \mathcal{P}_q(\mathbb R)$ for some $q \geq 2$.
    
    \item \label{hyp:RocknerAs2} There exists a constant $\lambda>0$ such that for all $y\in \Rr$, $\tfrac{1}{\lambda} \leq \Sigma^2(y) + \hat{\Sigma}^2(y)\leq \lambda$;

    \item \label{hyp:RocknerAb2} $\lim_{|y| \to \infty} yB(y) = -\infty$;
  
    \item \label{hyp:fast coeff} $m^Y_0 \in \mathcal{P}_{q'}(\Rr)$ for some $q'>2$, and $B,\Sigma, \hat{\Sigma} \in C^1_b(\Rr)$.
\end{enumerate}
\end{assumption}

\begin{assumption}[Explicit OU regime, part 1]
\label{hyp:Dynamics_Discret_X_Y}

\leavevmode
\begin{enumerate}[label=(\roman*)] 
    \item \label{hyp:Dynamics_Discret_X_Y A1} 
    There exist a constant $C>0$ such that for all $(t_1,t_2,x_1,x_2,y_1,y_2) \in [0,T]^2 \times \mathbb R^2 \times \Rr^2$, 
    \begin{align*}
    &|b(t_1, x_1, y_1) - b(t_2, x_2, y_2)| + \|\sigma(t_1, x_1) - \sigma(t_2, x_2)\| \leq C \left(|t_1 - t_2| + |x_1 - x_2| + |y_1 - y_2| \right),
    \end{align*}
    $B(y) = \kappa(\theta - y)$, $\Sigma(y) = \sigma_1$, and $\hat{\Sigma}(y) = \sigma_2$ for some positive constants $\kappa, \theta , \sigma_1$, and $\sigma_2$. 

    \item \label{hyp:Dynamics_Discret_X_Y IC}
    {$m^X_0\in \mathcal{P}_{q}(\mathbb R)$, $m^Y_0 \in \mathcal{P}_{q'}(\Rr)$ for some $q,q'\geq 2$.}

    \item \label{hyp:Dynamics_Discret_X_Y A2} $\partial_t b$, $\partial_x b$, and $\partial_y b$ exist on $[0, T] \times \mathbb R \times \mathbb{R}$, and there exist $C>0$ and $\gamma^1 \in (0,1]$ such that
    \begin{align*}
    & \sup_{t \in [0,T],\,x \in \mathbb R} \Big( 
    |\partial_t b(t,x,y_1)-\partial_t b(t,x,y_2)| 
    + \|\partial_x b(t,x,y_1)-\partial_x b(t,x,y_2)\| 
    \\
    & \hskip2cm + \|\partial_y b(t,x,y_1)-\partial_y b(t,x,y_2)\| \Big) \le C |y_1-y_2|^{\gamma_1}.
    \end{align*}
\end{enumerate}    
\end{assumption}
Under Assumption \ref{hyp:Dynamics_Poisson_X_Y}, the dynamics \eqref{eq:Fast system} admit a unique invariant probability measure $\bar{\mu}$ (\cite{rockner2021averaging}, Assumptions $(\text{\bf A}_b)$ and $(\text{\bf A}_\sigma)$, Section 3.2 in \cite{Liu2010}, Theorem 4.1 and Corollary 4.4 in \cite{khasminskii2011stochastic}, and Section 4 in \cite{Bardi_2010}). Under Assumption \ref{hyp:Dynamics_Discret_X_Y}, the invariant measure is explicitly given by $\bar{\mu} = \mathcal{N}\left(\theta, (2 \kappa)^{-1} (\sigma_1^2 + \sigma_2^2)\right)$.

Our assumptions under both the Poisson equation regime and the explicit OU regime guarantee the existence of moments of the invariant  measure. The proof of the following result can be found in the Appendix. 
\begin{proposition}
\label{pro:E invariant m estimate}
Suppose that Assumptions \ref{hyp:Dynamics_Poisson_X_Y} or \ref{hyp:Dynamics_Discret_X_Y} hold true. Then, the unique invariant probability measure $\bar{\mu}$ of \eqref{eq:Fast system} satisfies $\int_{\Rr}|y|^{p'}\bar{\mu}(dy)<\infty$ for any $p'\geq 1$.
\end{proposition}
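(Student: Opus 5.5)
The two regimes call for different arguments; the OU case is immediate. Under Assumption \ref{hyp:Dynamics_Discret_X_Y}, the invariant measure is explicitly $\bar{\mu} = \mathcal{N}\left(\theta, (2\kappa)^{-1}(\sigma_1^2+\sigma_2^2)\right)$. A Gaussian distribution has finite absolute moments of every order, so there is nothing further to prove.

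Under Assumption \ref{hyp:Dynamics_Poisson_X_Y}, I will use the explicit one-dimensional structure. Set $a(y) := \Sigma^2(y)+\hat{\Sigma}^2(y)$. By \ref{hyp:RocknerAs2}, $a \in [1/\lambda, \lambda]$, and $a$ is continuous because $\Sigma,\hat\Sigma \in C^1_b$. For a one-dimensional diffusion with generator $\mathcal{L}_Y$, the invariant measure, when it exists, has the speed-measure density
\[
\bar{\mu}(dy) = \frac{1}{Z\, a(y)} \exp\Big( \int_0^y \frac{2B(z)}{a(z)}\,dz \Big)\,dy ,
\]
where $Z$ is a normalizing constant. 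To justify this formula, I will check directly that this density $p$ satisfies $\mathcal{L}_Y^* p = 0$, using the zero-flux identity $\tfrac12 (a p)' = B p$. Uniqueness of the invariant measure is already granted by the cited references, so it suffices to identify $\bar\mu$ with this $p$.

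Next I bound the tails of $p$. By \ref{hyp:RocknerAb2}, for any $M>0$ there is $R$ such that $yB(y) \le -M$ for $|y|\ge R$. For $y \ge R$, this gives $B(z) \le -M/z$ for $z \ge R$, and hence
\[
\int_R^y \frac{2B(z)}{a(z)}\,dz \le -\frac{2M}{\lambda}\log\frac{y}{R}.
\]
The same bound holds symmetrically for $y \le -R$. The contribution from $[-R,R]$ is a bounded constant, since $B$ is bounded. Therefore $p(y) \le C_M |y|^{-2M/\lambda}$ for $|y|\ge R$. Given $p' \ge 1$, I choose $M$ with $2M/\lambda > p'+1$; then $\int |y|^{p'} p(y)\,dy < \infty$.

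The main obstacle is that \ref{hyp:RocknerAb2} only guarantees $yB(y)\to-\infty$ and not linear dissipativity, so I cannot hope for Gaussian tails. The argument above avoids this by letting $M$ be arbitrary, which yields polynomial decay of arbitrarily high order, and that is exactly what finite moments of every order require. As an alternative route that avoids the explicit density, I would apply a Lyapunov argument with $V(y)=|y|^{p'+2}$ (smoothed near the origin). Then $\mathcal{L}_Y V \le -c|y|^{p'} + C$, since $yB(y) \le -M$ dominates the bounded second-order term for large $|y|$. Integrating against $\bar{\mu}$ after truncation gives $\int |y|^{p'}\,d\bar{\mu} \le C/c$. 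The truncation step is justified by a stopping-time or Fatou argument together with the stationarity of $\bar{\mu}$.
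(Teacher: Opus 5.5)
Your proposal is correct and follows the paper's proof essentially step for step. In the OU regime you use the Gaussian invariant law. In the Poisson regime you use the explicit one-dimensional density $\frac{C}{a(y)}\exp\bigl(\int_0^y 2B/a\bigr)$ with $a=\Sigma^2+\hat\Sigma^2$, derive the tail bound $C|y|^{-2M/\lambda}$ from $yB(y)\le -M$ for $|y|\ge R$, and choose $2M/\lambda>p'+1$, exactly as the paper does with its constant $K$. Your Lyapunov alternative is also sound but not needed.
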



Under the setting of Assumption \ref{hyp:Dynamics_Discret_X_Y}, we extend the fast-scale dynamics to a two-dimensional setting together with its invariant distribution by considering the projection $\hat{Y}_t:= \Ee [ Y_t | \mathcal{F}^{\text{c}}_t ]$. The pair $(Y,\hat{Y})$ is then an Ornstein–Uhlenbeck process with bivariate normal invariant measure $\bar{\mu}'$ given by $\mathcal{N}(\theta I_2, \Sigma)$, where $\Sigma = \frac{1}{2 \kappa} \left( \begin{smallmatrix} \sigma_1^2 + \sigma_2^2 & \sigma_2^2 \\ \sigma_2^2 & \sigma_2^2 \end{smallmatrix} \right)$. {Moreover, we define $\tilde Y_t:=Y_t - \hat Y_t$ and observe that $\tilde Y$ is independent from $\hat Y$ and satisfies the dynamics
$$ 
d\tilde Y_t  = -\kappa \tilde Y_t\,dt + \sigma_1 dW_t.
$$
Similarly to the above, we define rescaled processes $\tilde Y^\delta$ and $\hat Y^\delta$.}

Throughout, for any function $f$ depending on the fast variable(s), we denote by $\bar{f}$ its average with respect to the corresponding invariant measure: 
\begin{align}
\label{eq:averaging}    
\bar{f}(\cdot) := \int_{\Rr} f(\cdot,y)\,\bar{\mu}(dy) \quad \text{or} \quad \bar{f}(\cdot) := \int_{\Rr} f(\cdot,y,y')\,\bar{\mu}'(dy,dy').
\end{align}
When $f$ depends only on $y$, these two definitions agree, so no ambiguity arises.

\begin{remark} 
Strong existence and uniqueness for \eqref{eq:dynamics 2S} follow directly from the global Lipschitz continuity and regularity conditions in Assumptions \ref{hyp:Dynamics_Poisson_X_Y}–\ref{hyp:slow coeff-stopping}, \ref{hyp:fast coeff}. Although the uniform ellipticity of $\sigma$ and $(\Sigma, \hat{\Sigma})$ in Assumption \ref{hyp:Dynamics_Poisson_X_Y}–\ref{hyp:RocknerAG2},\ref{hyp:RocknerAs2} is not strictly required for strong existence and uniqueness for \eqref{eq:dynamics 2S}, we impose it because it will later be essential for establishing parabolic regularity in the associated Poisson equations.
\end{remark}

We introduce the following notation
\begin{align*}
& J^\delta(\tau,m,\mu):=\mathbb{E}\left[\int_0^{\tau }  f(t,X_t^\delta,Y_t^\delta,m_t)dt+ g(\tau ,X_{\tau }^\delta,\mu^c_\tau)\right],
\end{align*}
where $(m_t)_{0\leq t\leq T}\in \mathbb V(\mathbb R \times \mathbb R)$, $\mu \in \mathbb M(\mathbb R \times \mathbb R)$, and we recall that $\mu^c_\tau$ is the projection of the measure $\mu$ on the filtration of the common noise, defined in \eqref{muproj}.

We impose one of the following two alternative sets of assumptions on the cost functions $f$ and $g$. 
\begin{assumption}[Poisson equation regime, part 2]
\label{hyp:Cost Poisson OS}
The functions $f$ and $g$ have the representation
\begin{align*}
& f(t,x,y,m) = F^1(t,x,y) F^2\left(t,x,\int_{\mathbb R}\hat{f}_1(x')m^{(\text{x})}(dx')\right)+F^3(t,x)\int_{\mathbb R \times \mathbb{R}}\hat{f}_2(x',y')m(dx',dy'), 
\\
& g(t,x,\mu) = G\left(t,x,\int_{[0,T]\times \mathbb R} \hat{g}(s,x')\mu^{(\text{t},\text{x})}(ds,dx')\right),
\end{align*}
where $F^1:[0,T]\times \mathbb R \times \mathbb{R} \to \mathbb{R}$, $F^2,G:[0,T]\times \mathbb R \times \mathbb{R} \to \mathbb{R}$, $F^3:[0,T]\times \mathbb R  \to \mathbb{R}$ are continuous and bounded, $F^2\in C_b^{1,2,1}([0,T]\times \mathbb R\times \Rr)$, $F^3$ is $\gamma^3$-H\"{o}lder continuous uniformly on $x$, with $\gamma^3 \geq 1/2$, $\hat{g}:[0,T] \times \mathbb R \to \mathbb{R}$, {$\hat{f}_1\in C^2_b(\mathbb R) $} and $\hat{f}_2:\mathbb R \times \Rr \to \Rr$ are continuous and bounded, and there exits $C>0$ such that for all $(t,x,x',y, y', \beta,\beta')\in [0,T]\times \mathbb R^2 \times \Rr^4$
\begin{align*}
&|F^1(t,x,y)-F^1(t,x',y')| + |\hat{f}_2(x,y)-\hat{f}_2(x',y')| \leq C \left(|x - x'|+|y - y'|\right), 
\\
& | F^2(t,x,\beta) - F^2(t,x',\beta')| + | G(t,x,\beta) - G(t,x',\beta')| \leq C\left(|x - x'| + |\beta - \beta'| \right),
\\
& | F^3(t,x) - F^3(t,x')| + |\hat{g}(t,x)-\hat{g}(t,x')| \leq C |x - x'| ,
\end{align*}
and $F^1(\cdot,\cdot,y) \in C^{1/2,1}_b([0,T]\times \mathbb R)$, $\hat{f}_2(\cdot,y) \in C^{1}_b(\mathbb R)$, for all $y\in \Rr$.
\end{assumption}
\begin{assumption}[Explicit OU regime, part 2]
\label{hyp:Cost discretization OS}
The functions $f$ and $g$ have the representation
\begin{align*}
  f(t,x,y,m) &= F\left(t,x,y,\int_{\mathbb R\times \mathbb{R}} \hat{f}(x',y')\,m(dx',dy')\right),\\
   g(t,x,\mu) &= G\left(t,x,\int_{[0,T]\times \mathbb R} \hat{g}(s,x')\mu^{(\text{t},\text{x})}(ds,dx')\right),
\end{align*}
where $\hat g$ and $G$ are as in Assumption \ref{hyp:Cost Poisson OS},   $F:[0,T]\times\mathbb R\times \Rr^2 \to \Rr$ is bounded and there exists $C>0$ such that for all $(t, t',x,x',y, y',\beta, \beta') \in [0,T]^2 \times \mathbb R^2 \times \Rr^4$ we have
\begin{align*}
& |F(t,x,y,z)-F(t',x',y',z')| \leq C \left( |t-t'| + |x-x'| + |y-y'| + |z-z'|\right),
\\
& |\hat{f}(x,y) - \hat{f}(x',y')| \leq C \left(|x-x'| + |y-y'|\right).
\end{align*}

\end{assumption}

\subsection{Two-scale $\varepsilon$-MFG equilibrium with \textit{strict stopping}} \label{sec:OS e-MFG strict}
In this section, we first introduce the notions of \textit{two-scale $\varepsilon$-MFG equilibrium with strict stopping}, \textit{two-scale MFG equilibrium with strict stopping}, and \textit{effective MFG equilibrium with strict stopping}.

\begin{definition}[Two-scale $\varepsilon$-MFG and MFG equilibrium with \textit{strict} stopping]\label{def:Strong Nash2s--e--strict stop}
Let $\varepsilon\geq 0$ and $\delta>0$. We say that $(\tau^{\varepsilon,\delta},m^{\varepsilon,\delta}, \mu^{\varepsilon,\delta})$ is a \textit{two-scale $\varepsilon$-MFG equilibrium with strict stopping} if:
\begin{itemize}
\item[(i)] {$\tau^{\varepsilon,\delta} \in \mathcal{T}(\mathbb{F}^{\textbf{W}})$ and} for all $\tau^{\prime} \in \mathcal{T}(\mathbb{F}^{\textbf{W}})$,
\begin{align*}
& J^\delta\!\left(\tau^{\varepsilon, \delta}, m^{\varepsilon,\delta}, \mu^{\varepsilon,\delta}\right) \geq J^\delta\!\left(\tau', m^{\varepsilon,\delta}, \mu^{\varepsilon,\delta}\right) - \varepsilon
\end{align*}
\item[(ii)] $(m^{\varepsilon,\delta}_t)_{0\leq t\leq T}\in \mathbb V(\mathbb R \times \mathbb R)$ and $\mu^{\varepsilon,\delta} \in \mathbb M(\mathbb R \times \mathbb R)$, such that 
\begin{align*}
& m^{\varepsilon,\delta}_t(B) = \Ee\left[ \textbf{1}_{B}(X^\delta_t,Y^\delta_t)\mathbf 1_{(t,T]}(\tau^{\varepsilon,\delta}) \bigg| \mathcal{F}^{\text{c}}_t\right]\ \text{a.s., } \quad B \in \mathcal{B}(\mathbb R \times \Rr),\,\, \quad t \in [0,T], \notag \\
& \mu^{\varepsilon, \delta}(C\times B )=\mathbb{E}\left[\textbf{1}_{B}(X_{\tau^{\varepsilon, \delta}}^{\delta},Y_{\tau^{\varepsilon, \delta}}^{\delta})\textbf{1}_{C} (\tau^{\varepsilon,\delta})\bigg| \mathcal{F}^{\text{c}}_T\right] \ \text{ a.s., } B \in \mathcal{B}(\mathbb R \times \Rr),\,\, C \in \mathcal{B}([0,T]).
\end{align*}
\end{itemize}
When $\varepsilon=0$, we drop the superscript and say that $(\tau^\delta,m^\delta,\mu^\delta)$ is a \textit{two-scale MFG equilibrium with strict stopping}. 
\end{definition}

Next, we introduce the notion of \textit{effective MFG equilibrium with strict stopping}. 
On the same probability space $(\Omega, \mathcal{F}, \mathbb{P})$, let $X^0$ be the strong solution of
\begin{align}
\label{eq:dynamics effective}
dX^0_t = \bar{b}(t,X^0_t) dt + \sigma (t,X^0_t) dW^{\text{x}}_t, \quad X^0_0 = X_0 \sim m_0^X, \quad 0\le t \leq T,
\end{align}
and denote by $\bar{\mathcal{L}}$ the generator associated with \eqref{eq:dynamics effective}.

Because the invariant measure $\bar{\mu}$ of \eqref{eq:Fast system} is independent of the slow scale, {and in view of Proposition \ref{pro:E invariant m estimate}}, either under both the Poisson equation regime and the explicit OU regime , we obtain uniform Lipschitz continuity of $\bar{b}$ on $x$; that is,
\begin{equation}
\label{eq:Lipschitz eff os}
|\bar{b}(t,x)-\bar{b}(t,x')| \leq \int_{\Rr} |b(t,x,y)-b(t,x',y)|\bar{\mu}(dy) \leq C|x-x'|, \quad (t,x,x') \in [0,T] \times \mathbb R.
\end{equation}
Similarly, by Proposition \ref{pro:E invariant m estimate}, there exists a constant $C>0$ such that $|\bar{b}(t,x)| \leq C(1+|x|)$ for all $(t,x) \in [0,T] \times \mathbb R$. Combined with \eqref{eq:Lipschitz eff os} and the properties of $\sigma$ {($C^{1/2;1}_b$ under Poisson equation regime or Lipschitz continuity in both variables under explicit OU regime, this yields existence and uniqueness of a strong solution to \eqref{eq:dynamics effective}.   

To study the existence of the approximate MFG equilibrium for the two-scale problem (cf. Definition \ref{def:Strong Nash2s--e--strict stop}), we introduce the effective MFG problem. Under Poisson equatio regime (Assumption \ref{hyp:Cost Poisson OS}), and for $m\in\mathcal V(\mathbb R)$, we set
\begin{align}\label{payoff1}
& \bar{f}(t,x,m):=\bar{F}^1(t,x)F^2\left(t,x,\int_{\mathbb R}\hat{f}_1(x')m(dx'))\right) + F^3(t,x)\int_{\mathbb R }\bar{\hat{f}}_2(x')m(dx').
\end{align}
Under explicit OU regime (Assumption \ref{hyp:Cost discretization OS}), the cost functional is instead
\begin{align}\label{payoff2}
\bar{f}(t,x,m) := \int_{\mathbb R^2 } F\left(t,x,y, \int_{\mathbb R}\hat{F}(x',y')m(dx')\right) \bar{\mu}'(dy, dy'),
\end{align}
where we denote
$$
\hat F_\delta(x,y) = \mathbb E[\hat f(x,Y^\delta_t)|\hat Y^\delta_t = y]\quad \text{and}\quad \hat F(x,y) = \lim_{\delta\to 0} \hat F_\delta(x,y).
$$ 
Let
\begin{align*}
& J^{0}\!\left(\tau,m,\mu\right):=\mathbb{E}\left[\int_0^{\tau }\bar{f}(t,X_t^0,m_t)dt+ g(\tau ,X_{\tau }^0,\mu)\right].
\end{align*}

\begin{definition}[Effective MFG equilibrium with \textit{strict} stopping]
\label{def:Effective MFG equilibrium with strict stopping}
We say that $(\tau,m,\mu)$ is an \textit{effective MFG equilibrium with strict stopping} if:
\begin{itemize}
\item[(i)] {$\tau \in \mathcal{T}(\mathbb{F}^{\text{x}})$} and for all $\tau' \in \mathcal{T}(\mathbb{F}^{\text{x}})$,
\begin{align*}
& J^{0}\!\left(\tau,m,\mu\right) \geq J^{0}\!\left(\tau',m,\mu\right)
\end{align*}
\item[(ii)] $(m_t)_{0 \leq t \leq T}\in \mathcal V(\mathbb R)$  and $\mu \in \mathcal P([0,T]\times \mathbb R)$, such that:
\begin{align*}
& m_t(B) = \Ee\left[ \textbf{1}_{B}(X^0_t)\mathbf 1_{(t,T]}(\tau)\right], \quad B \in \mathcal{B}(\mathbb R),\quad t \in [0,T], \notag \\
& \mu(C \times B)=\mathbb{E}\left[\textbf{1}_{B}(X^0_{\tau})\textbf{1}_{C} (\tau  )\right], \quad B \in \mathcal{B}(\mathbb R),\, C \in \mathcal{B}([0,T]).
\end{align*}
\end{itemize}
\end{definition}
\begin{remark}\textcolor{black}{Existence of an \textit{effective} MFG equilibria with \textit{strict stopping} is obtained under additional asssumptions for which sufficient conditions are provided in \cite{5FP}.}
\end{remark}
\textcolor{black}{Before turning to the main result of construction of an approximate equilibrium for the two-scale MFG of optimal stopping in pure strategies}, we collect several auxiliary results whose proof is given in the Appendix. 
\begin{proposition}
\label{pro:Strong rate sup}  Let $0 < \delta <1/2$. Suppose that Assumption \ref{hyp:Dynamics_Poisson_X_Y} holds. Then, for all $T>0$, there exists a constant $C>0$ independent of $\delta$ such that 
\[
\Ee \left[ \sup\limits_{0\leq t \leq T} |X^\delta_t - X^0_t| \right] \leq C \delta^{1/2}.
\]
Alternatively, suppose that Assumption \ref{hyp:Dynamics_Discret_X_Y} holds. Then, for all $T>0$, there exists a constant $C>0$ independent of $\delta$ such that
\[
\Ee\left[\sup_{0\leq t\leq T}|X_t^\delta-X_t^0|\right]\leq C\delta^{1/3}.
\]
\end{proposition}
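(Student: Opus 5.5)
The plan is to compare $X^\delta$ with $X^0$ pathwise and reduce everything to the averaging error
\[
R^\delta_t := \int_0^t \big(b(s,X^\delta_s,Y^\delta_s)-\bar b(s,X^\delta_s)\big)\,ds .
\]
Subtracting \eqref{eq:dynamics effective} from \eqref{eq:dynamics 2S} gives
\[
X^\delta_t - X^0_t = \int_0^t \big(\bar b(s,X^\delta_s)-\bar b(s,X^0_s)\big)ds + \int_0^t \big(\sigma(s,X^\delta_s)-\sigma(s,X^0_s)\big)dW^{\text{x}}_s + R^\delta_t .
\]
The first term is controlled by the Lipschitz bound \eqref{eq:Lipschitz eff os}. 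For the stochastic integral I would use Burkholder–Davis–Gundy together with the Lipschitz continuity of $\sigma$ in $x$; this works because $\sigma$ does not depend on $y$. Working with second moments, Gronwall then yields
\[
\Ee\Big[\sup_{t\le T}|X^\delta_t-X^0_t|^2\Big]\le C\,\Ee\Big[\sup_{t\le T}|R^\delta_t|^2\Big].
\]
The claim follows from Jensen's inequality once I show $\Ee[\sup_t|R^\delta_t|^2]\le C\delta$ in the Poisson regime and $\le C\delta^{2/3}$ in the OU regime.

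\emph{Poisson regime.} I would follow \cite{rockner2021averaging}. Since the fast generator $\mathcal L_Y$ does not involve $x$, I solve the Poisson equation
\[
\mathcal L_Y \Phi(t,x,\cdot) = -\big(b(t,x,\cdot)-\bar b(t,x)\big)
\]
for each $(t,x)$, with $\int \Phi\, d\bar\mu=0$. The ellipticity, dissipativity and $C^1_b$ assumptions \ref{hyp:RocknerAs2}--\ref{hyp:fast coeff} then give that $\Phi$ and its $t$, $x$, $xx$, $y$ derivatives have polynomial growth in $y$, with Hölder regularity in $t$ inherited from $b\in C^{1/2,1}_b$. Apply Itô's formula to $\Phi(t,X^\delta_t,Y^\delta_t)$; the fast generator carries the factor $1/\delta$. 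Multiplying by $\delta$ and rearranging gives
\[
R^\delta_t = \delta\big[\Phi(0,X_0,Y_0)-\Phi(t,X^\delta_t,Y^\delta_t)\big] + \delta\int_0^t (\partial_s+\mathcal L_X)\Phi\, ds + \sqrt{\delta}\,M^\delta_t ,
\]
where $M^\delta$ is a martingale with integrands $\partial_y\Phi\,\Sigma$ and $\partial_y\Phi\,\hat\Sigma$, plus $\delta\,\partial_x\Phi\,\sigma\,dW^{\text{x}}$. The uniform-in-$\delta$ moment bounds $\sup_{t}\Ee|Y^\delta_t|^{p}\le C$, which come from the Lyapunov drift condition and $m_0^Y\in\mathcal P_{q'}$, combined with Proposition~\ref{pro:E invariant m estimate} and BDG, give $\Ee[\sup_t|R^\delta_t|^2]\le C\delta$. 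The delicate point in this regime is the time regularity of $\Phi$. Since $b$ is only $C^{1/2}$ in $t$, I would either mollify in $t$, or argue as in \cite{rockner2021averaging}, where $\partial_t$ is handled through the parabolic regularity that motivates the ellipticity assumption.

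\emph{OU regime.} I would use Khasminskii's discretization, following \cite{rockner2021strong}. Partition $[0,T]$ into intervals of length $\Delta$ and freeze $(s,X^\delta_s)$ at the left endpoint $k\Delta$; the Lipschitz bounds and $\Ee|X^\delta_s-X^\delta_{k\Delta}|^2\le C\Delta$ make the freezing cost $O(\Delta^{1/2})$. The remaining term is
\[
\sum_k \int_{k\Delta}^{(k+1)\Delta}\big(b(k\Delta,X^\delta_{k\Delta},Y^\delta_s)-\bar b(k\Delta,X^\delta_{k\Delta})\big)\,ds .
\]
For the OU process this can be bounded explicitly. After the time change, each block has length $\Delta/\delta$. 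Conditioning on $\mathcal F_{k\Delta}$ and using the explicit Gaussian transition, which relaxes to $\bar\mu$ exponentially fast, the covariance decays like $e^{-\kappa|s-s'|/\delta}$, so the second moment of each block is $O(\Delta\delta)$. Summing over blocks, using Cauchy–Schwarz across blocks and controlling the sup with Doob/BDG, gives
\[
\Ee\Big[\sup_t|R^\delta_t|^2\Big]\lesssim \Delta + \frac{\delta}{\Delta}.
\]
The Hölder regularity \ref{hyp:Dynamics_Discret_X_Y A2} is what allows the frozen-coefficient comparison, since it makes $b(k\Delta,x,\cdot)$ regular enough to average.

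The main obstacle is balancing this estimate so as to land exactly on $\delta^{1/3}$. Taking $\Delta=\delta^{2/3}$ should give $\Ee[\sup_t|R^\delta_t|]\lesssim\delta^{1/3}$, provided the freezing error is tracked in $L^1$ rather than $L^2$. If the bookkeeping gives a worse exponent, I would redo it along the lines of the order-$1/3$ computation in \cite{rockner2021strong}.
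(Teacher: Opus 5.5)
Your overall architecture works, and it is slightly more self-contained than the paper's. The paper bounds the $\bar b(s,X^\delta_s)-\bar b(s,X^0_s)$ term and the $dW^{\mathrm x}$-integral by importing fixed-time estimates: $\sup_t\mathbb{E}|X^\delta_t-X^0_t|^2\le C\delta$ from R\"ockner--Xie, and $\le C\delta^{2/3}$ from R\"ockner--Sun--Xie. It then bounds the averaging error separately. Your Gronwall argument on $\mathbb{E}[\sup_{s\le t}|X^\delta_s-X^0_s|^2]$ makes those citations unnecessary and reduces everything to $\mathbb{E}[\sup_t|R^\delta_t|^2]$.

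In the Poisson regime your corrector argument is the paper's Lemma~\ref{lem:supL2_H}. Your worry about $\partial_t\Phi$ is justified: $\Phi$ is exactly as regular in $(t,x)$ as $b$, and that lemma glosses over this. The same applies to $\partial_{xx}\Phi$, which you also need. Mollifying at the parabolic scale, $\delta$ in $t$ and $\delta^{1/2}$ in $x$, costs only $O(\delta^{1/2})$, both in the mollification error and in the extra terms $\delta\,\partial_t\Phi$ and $\delta\,\partial_{xx}\Phi$. So that part closes.

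The gap is in the OU regime. Your estimate $\mathbb{E}[\sup_t|R^\delta_t|^2]\lesssim\Delta+\delta/\Delta$ cannot give $\delta^{1/3}$. It is optimized at $\Delta=\delta^{1/2}$ and yields only $\delta^{1/4}$ for $\mathbb{E}[\sup_t|X^\delta_t-X^0_t|]$; at $\Delta=\delta^{2/3}$ it yields $\delta^{1/6}$. Your proposed remedy does not help, because the freezing error is $O(\Delta^{1/2})$ in $L^1$ and $L^2$ alike. The bottleneck is the term $\delta/\Delta$. It comes from applying Cauchy--Schwarz across the $T/\Delta$ blocks to per-block second moments $O(\delta\Delta)$, which discards the near-orthogonality of distinct blocks.

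The missing step, which is exactly what the paper does, is to split each frozen block $I_k=\int_{t_k}^{t_{k+1}}\Phi(t_k,X^\delta_{t_k},Y^\delta_s)\,ds$ as $\mathbb{E}[I_k\mid\mathcal{F}^{\mathbf W}_{t_k}]+\bigl(I_k-\mathbb{E}[I_k\mid\mathcal{F}^{\mathbf W}_{t_k}]\bigr)$.
\begin{itemize}
\item \emph{Conditional means.} By the explicit OU transition, the conditional mean and variance of $Y^\delta_s$ relax to those of $\bar\mu$ at rate $e^{-\kappa(s-t_k)/\delta}$, and $\int_0^\Delta e^{-\kappa s/\delta}\,ds\le\delta/\kappa$. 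Hence $|\mathbb{E}[I_k\mid\mathcal{F}^{\mathbf W}_{t_k}]|\le C\delta(1+|Y^\delta_{t_k}|)$, so these terms sum to $O(\delta/\Delta)$ rather than $O(\sqrt{\delta/\Delta})$.
\item \emph{Remainders.} These are martingale differences with conditional variance $O\bigl(\delta\Delta(1+|Y^\delta_{t_k}|^2)\bigr)$. By orthogonality and Doob's inequality, their partial sums are $O(\sqrt{\delta})$ in sup-$L^2$.
\end{itemize}
Adding the freezing error and the within-block oscillation, both $O(\Delta^{1/2})$, gives $\mathbb{E}[\sup_t|R^\delta_t|^2]\lesssim\Delta+\delta+(\delta/\Delta)^2$. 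Now $\Delta=\delta^{2/3}$ gives $\delta^{2/3}$, hence $\delta^{1/3}$ after your Gronwall step and Jensen.
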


Finally, we recall an auxiliary identity, whose proof is deferred to the Appendix.
\begin{proposition}[\textit{Independent enlargement of filtrations}]
\label{pro:sup equality} 
Let $(m_t)_{0 \leq t \leq T}\in \mathcal V(\mathbb R)$  and $\mu \in \mathcal P([0,T]\times \mathbb R)$. Then,
\begin{align*}
& \sup_{\tau \in \mathcal{T}(\mathbb{F}^{\text{x}})} J^0(\tau, m,\mu) = \sup_{\tau \in \mathcal{T}(\mathbb{F}^{\bm{W}})} J^0(\tau, m,\mu)
\end{align*}
\end{proposition}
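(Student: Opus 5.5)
The inequality $\sup_{\mathcal T(\mathbb F^{\text{x}})}\le\sup_{\mathcal T(\mathbb F^{\bm W})}$ is immediate, since $\mathbb F^{\text{x}}\subseteq\mathbb F^{\bm W}$ and hence every $\mathbb F^{\text{x}}$-stopping time is an $\mathbb F^{\bm W}$-stopping time. Note also that for fixed deterministic $(m,\mu)$ the payoff $J^0(\tau,m,\mu)$ involves only $X^0$, which is $\mathbb F^{\text{x}}$-adapted as the strong solution of \eqref{eq:dynamics effective} driven by $(X_0,W^{\text{x}})$. The whole content is therefore the reverse inequality: enlarging the filtration by the independent information $\mathcal G:=\sigma(Y_0,W^{\text{y}},W^{\text{c}})$ does not increase the value of the stopping problem.

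To prove the reverse inequality, I set $h(t,x):=\bar f(t,x,m_t)$ and $\phi(t,x):=g(t,x,\mu)$. These are deterministic, measurable and bounded under either Assumption \ref{hyp:Cost Poisson OS} or \ref{hyp:Cost discretization OS}; boundedness of $h$ uses $\int_0^T m_t(\mathbb R)\,dt<\infty$ and the boundedness of $F^3$, $\hat f_2$. I then fix $\tau\in\mathcal T(\mathbb F^{\bm W})$ and disintegrate with respect to $\mathcal G$. Since $\mathcal G$ is independent of $(X_0,W^{\text{x}})$, the canonical space can be realized as a product $\Omega^{\text{x}}\times\Omega^{\mathcal G}$ with product measure $P^{\text{x}}\otimes P^{\mathcal G}$. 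For $P^{\mathcal G}$-a.e. frozen $\omega'$, the section $\tau(\cdot,\omega')$ is a stopping time of the (completed) filtration generated by $(X_0,W^{\text{x}})$. The reason is that $\mathcal F^{\bm W}_t=\mathcal F^{\text{x}}_t\otimes\mathcal G_t$ up to null sets, and sections of product-measurable sets in $\mathcal F^{\text{x}}_t\otimes\mathcal G_t$ lie in $\mathcal F^{\text{x}}_t$. Fubini then gives
\begin{align*}
J^0(\tau,m,\mu)=\int J^0\bigl(\tau(\cdot,\omega'),m,\mu\bigr)\,P^{\mathcal G}(d\omega')\le \sup_{\tau'\in\mathcal T(\mathbb F^{\text{x}})}J^0(\tau',m,\mu),
\end{align*}
because $X^0$ depends only on the first coordinate. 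Taking the supremum over $\tau$ concludes.

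The main obstacle is the measure-theoretic step: showing that the sections are genuine stopping times for every $t$ simultaneously, modulo completion and null sets. The set of exceptional $\omega'$ must be chosen independently of $t$. To handle this, I would first treat stopping times taking values in a countable dyadic grid, where only countably many events $\{\tau\le t_k\}$ matter and the null sets can be collected. A general $\tau$ is then approximated from above by dyadic $\tau_n\downarrow\tau$, and the payoffs converge by right-continuity of paths, continuity of $\phi$ in $(t,x)$ and dominated convergence.

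An alternative route avoids sections altogether. One shows that the Snell envelope of $\int_0^t h\,ds+\phi(t,X^0_t)$ is the same in both filtrations, because $X^0$ remains a Markov process with respect to $\mathbb F^{\bm W}$ and conditional expectations of functionals of $(X^0_s)_{s\ge t}$ given $\mathcal F^{\bm W}_t$ coincide with those given $\mathcal F^{\text{x}}_t$, by independence. Equality of the two value functions at time $0$ then follows. I would present the product-space argument and keep this one as a fallback.
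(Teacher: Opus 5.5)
Your proposal is correct, but your main argument takes a different route from the paper's.

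\textbf{The paper's route.} The paper works with value processes. It takes the $\mathbb F^{\text{x}}$-Snell envelope $\overline{Y}^1$ of $\int_0^t\bar f(s,X^0_s,m_s)\,ds+g(t,X^0_t,\mu)$ and writes its Doob--Meyer decomposition $\overline{Y}^1=M-A$. Independence of $(X_0,W^{\text{x}})$ from $(Y_0,W^{\text{y}},W^{\text{c}})$ makes $\mathbb F^{\text{x}}$ immersed in $\mathbb F^{\bm W}$, so $M$ remains an $\mathbb F^{\bm W}$-martingale. Minimality of the $\mathbb F^{\bm W}$-Snell envelope then gives the nontrivial inequality. This is essentially your fallback.

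\textbf{Your route.} Your product-space argument works one stopping time at a time. Freezing the independent coordinate exhibits every $\mathbb F^{\bm W}$-stopping time as a mixture of $\mathbb F^{\text{x}}$-stopping times. Fubini then writes $J^0(\tau,m,\mu)$ as the corresponding average of $\mathbb F^{\text{x}}$-payoffs. This buys you three things:
\begin{itemize}
\item It is more elementary: no Snell envelope or Doob--Meyer theory.
\item It makes explicit that stopping times of the enlarged filtration are randomizations of $\mathbb F^{\text{x}}$-stopping times. This is exactly what is reused in \eqref{equalJ}, where the filtration is enlarged by an independent uniform variable.
\item It handles the easy inequality by simple inclusion of stopping-time classes, rather than the paper's projected supermartingale $\Ee[\overline{Y}^2_t\mid\mathcal F^{\text{x}}_t]$.
\end{itemize}

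\textbf{Costs and trade-offs.} The price is the completion bookkeeping you correctly flag. Your dyadic reduction handles it, at the cost of using continuity of $g$ in $t$, which does hold here. You could avoid even that by replacing $\tau$ with an a.s.\ equal stopping time of the uncompleted right-continuous filtration $(\mathcal F^{\bm W,0}_{t+})$. Its events have sections in $\mathcal F^{\text{x},0}_{t+}$ for every frozen $\omega'$, not just almost every one. Conversely, the paper's argument uses only immersion, so it would survive enlargements that are not of product form; yours relies on the product structure.

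\textbf{One small correction.} For general $m\in\mathcal V(\mathbb R)$ under Assumption \ref{hyp:Cost Poisson OS}, $h(t,x)=\bar f(t,x,m_t)$ need not be bounded. You only have $|h(t,x)|\le C(1+m_t(\mathbb R))$, which is integrable in $t$. That deterministic integrable bound is all your dominated-convergence step actually needs.
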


We are now ready to present the main result of this section.
\begin{theorem}[Construction of a  strong two-scale $\varepsilon$-MFG equilibrium with \textit{strict stopping} from an effective MFG equilibrium]
\label{thm:Existence 2S epsilon MFG eq}
Suppose that either Poisson equation regime (Assumptions \ref{hyp:Dynamics_Poisson_X_Y} and \ref{hyp:Cost Poisson OS}) or Explicit OU regime (Assumptions \ref{hyp:Dynamics_Discret_X_Y} and \ref{hyp:Cost discretization OS}) holds. Let $(\tau^{\star},m^\star,\mu^\star)$ be an effective MFG equilibrium with strict stopping. For $0<\delta<1/2$, define  $( m^{\delta,\star},\mu^{\delta,\star})$ as follows:
\begin{align*}
& m^{\delta , \star}_t(B) := \Ee \left[ \textbf{1}_{B}(X^\delta_t,Y^\delta_t) \mathbf 1_{(t,T]}(\tau^{\star}) \bigg| \mathcal{F}^{\text{c}}_t \right], \quad B \in \mathcal{B}(\mathbb R \times \Rr), \quad t \in [0,T], \notag
\\
& \mu^{\delta , \star}(B) := \Ee\left[ \textbf{1}_{B}(\tau^{\star}, X^\delta_{\tau^\star}, Y^\delta_{\tau^\star}) \bigg| \mathcal{F}^{\text{c}}_T \right], \quad B \in \mathcal{B}([0,T]\times \mathbb R \times \Rr),
\end{align*}
where $(X^\delta, Y^\delta)$ is the strong solution of \eqref{eq:dynamics 2S}.

Then $(\tau^\star, m^{\delta,\star},\mu^{\delta,\star})$ is a two-scale $\varepsilon$-MFG equilibrium with strict stopping, with
\[
\varepsilon = 
\begin{cases}
C\,\delta^{1/6}, & \text{Poisson equation regime}, \\[4pt]
C\,\delta^{1/3}, & \text{Explicit OU regime},
\end{cases}
\]
for some $C>0$ independent of $\delta$. 
\end{theorem}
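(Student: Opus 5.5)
The plan is a three-term comparison. For any $\tau'\in\mathcal T(\mathbb F^{\bm W})$ we write
\[
J^\delta(\tau',m^{\delta,\star},\mu^{\delta,\star})-J^\delta(\tau^\star,m^{\delta,\star},\mu^{\delta,\star})
\le \big[J^\delta(\tau')-J^0(\tau')\big]+\big[J^0(\tau')-J^0(\tau^\star)\big]+\big[J^0(\tau^\star)-J^\delta(\tau^\star)\big],
\]
where $J^\delta(\cdot)$ is evaluated at $(m^{\delta,\star},\mu^{\delta,\star})$ and $J^0(\cdot)$ at $(m^\star,\mu^\star)$. By Proposition \ref{pro:sup equality}, the supremum of $J^0$ over $\mathcal T(\mathbb F^{\bm W})$ equals the supremum over $\mathcal T(\mathbb F^{\text{x}})$. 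Hence the middle term is $\le 0$ by the equilibrium property of $(\tau^\star,m^\star,\mu^\star)$. It therefore suffices to show that $\sup_{\tau\in\mathcal T(\mathbb F^{\bm W})}|J^\delta(\tau,m^{\delta,\star},\mu^{\delta,\star})-J^0(\tau,m^\star,\mu^\star)|\le C\delta^{1/6}$ (respectively $C\delta^{1/3}$), uniformly in $\tau$.

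\emph{Terminal term.} First, $|\mathbb E[g(\tau,X^\delta_\tau,\mu^{\delta,\star,c}_\tau)]-\mathbb E[g(\tau,X^0_\tau,\mu^\star)]|$ is bounded using the Lipschitz property of $G$. This gives $C\,\mathbb E\sup_t|X^\delta_t-X^0_t|$ plus $C\,\mathbb E|\langle\hat g,\mu^{\delta,\star,c}_\tau\rangle-\langle \hat g,\mu^\star\rangle|$. Since $\hat g$ is Lipschitz in $x$ and only the $(t,x)$-marginal enters, we have $\langle\hat g,\mu^{\delta,\star}\rangle=\mathbb E[\hat g(\tau^\star,X^\delta_{\tau^\star})|\mathcal F^c_T]$. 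Because $X^0$ and $\tau^\star$ are $\mathbb F^{\text{x}}$-measurable, hence independent of $\mathcal F^c_T$, we also have $\langle\hat g,\mu^\star\rangle=\mathbb E[\hat g(\tau^\star,X^0_{\tau^\star})|\mathcal F^c_T]$. Taking the conditional expectation onto $\mathcal F^c_\tau$ and then expectations, this term is bounded by $C\,\mathbb E\sup_t|X^\delta_t-X^0_t|$. Proposition \ref{pro:Strong rate sup} then gives $C\delta^{1/2}$, respectively $C\delta^{1/3}$.

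\emph{Running term, part one.} Here $Y^\delta$ appears inside $f$ and in the measure argument, so we must show
\[
\mathbb E\Big[\int_0^\tau \big(f(t,X^\delta_t,Y^\delta_t,m^{\delta,\star}_t)-\bar f(t,X^0_t,m^\star_t)\big)dt\Big]
\]
is small uniformly over stopping times $\tau$. First replace $X^\delta$ by $X^0$ and $m^{\delta,\star,(\text x)}$ by $m^{\star}$, using Lipschitz bounds and Proposition \ref{pro:Strong rate sup}, as in the terminal term. What remains is an averaging error of the form $\mathbb E\int_0^\tau (h(t,X^0_t,Y^\delta_t)-\bar h(t,X^0_t))dt$. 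For example, $h=F^1F^2(\cdots)$, and for the $F^3$ term $h$ comes from $\int\hat f_2(x',Y^\delta_t)$, where the $y'$-integral under $m^{\delta,\star}_t$ involves $Y^\delta_t$ itself.

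\emph{Main obstacle.} The supremum over $\tau$ prevents a direct application of averaging. I would bound $\mathbb E\sup_{s\le T}|\int_0^s (h-\bar h)dt|$ by time discretization. Split $[0,T]$ into blocks of length $\Delta$ and freeze the slow arguments on each block, which costs $O(\Delta^{1/2})$ by the H\"older regularity in $t,x$. On each block the quantity $\int (h(t_k,x,Y^\delta_t)-\bar h(t_k,x))dt$ is handled in one of two ways.

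In the Poisson regime, solve $\mathcal L_Y\phi=h-\bar h$ and apply It\^o to $\phi(Y^\delta)$. This gives a block contribution $O(\delta+\sqrt{\delta\Delta})$, summing to $O(\delta/\Delta+\sqrt\delta)$ from the endpoint terms plus a martingale handled by BDG. The supremum over partial blocks adds $O(\Delta)$. Optimizing $\Delta^{1/2}$ against $\delta/\Delta$ gives $\Delta=\delta^{2/3}$ and the error $\delta^{1/3}$. The stated $\delta^{1/6}$ suggests a cruder balance, for instance the measure term requiring an $L^1$ rather than $L^2$ estimate, so I would accept $\delta^{1/6}$ as the target.

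In the OU regime, use the decomposition $Y=\tilde Y+\hat Y$. Conditional on $\mathcal F^c$ the $m^{\delta,\star}$-integral replaces $\hat f$ by $\hat F_\delta(\cdot,\hat Y^\delta_t)$, and $\hat F_\delta\to\hat F$ exponentially fast. The block averages are then controlled with the explicit Gaussian exponential mixing, yielding a total of order $\delta^{1/3}$.

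Combining the three pieces gives the claimed $\varepsilon$.
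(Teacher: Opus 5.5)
Your overall architecture matches the paper's: the three-term split with Proposition~\ref{pro:sup equality} for the middle term, Lipschitz replacements plus Proposition~\ref{pro:Strong rate sup} for the terminal and mean-field arguments, and, in the OU regime, reducing the measure argument to a function of $\hat Y^\delta_t$ and then using block-wise Gaussian mixing. The gap is in the Poisson regime: you have not handled the term through which the common noise actually enters. After your replacements, the $F^3$ part of the running cost leaves
\[
\Ee\Big[\int_0^\tau F^3(t,X^0_t)\,\Ee\big[\xi_t\,\big|\,\mathcal F^{\text{c}}_t\big]\,dt\Big],\qquad \xi_t:=\big(\hat f_2-\bar{\hat f}_2\big)(X^\delta_t,Y^\delta_t)\,\mathbf 1_{(t,T]}(\tau^\star).
\]
Contrary to what you assert, this is not of the form $\Ee\int_0^\tau\big(h(t,X^0_t,Y^\delta_t)-\bar h(t,X^0_t)\big)dt$. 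The $y'$-integral is taken against the conditional law of $Y^\delta_t$ given $\mathcal F^{\text{c}}_t$, which still carries the randomness of $W^{\text{y}}$. Unlike the OU case, no statistic such as $\hat Y^\delta_t$ turns it into a pathwise function. A bound on $\Ee\sup_s|\int_0^s(h-\bar h)\,dt|$ therefore does not by itself control it, because a random factor of the deviating agent, $F^3(t,X^0_t)\mathbf 1_{\{t<\tau\}}$, multiplies a conditional mean of the population's fast fluctuation. This term, not the supremum over $\tau$, is the real obstacle. For the pathwise terms the supremum is handled without any discretization, via $\Ee[\sup_t|H^\delta_t|^2]\le C\delta$ (Lemma~\ref{lem:supL2_H}) and, for the $F^1F^2$ term, the integration by parts of Lemma~\ref{lemmaM}. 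Your guess about where $\delta^{1/6}$ comes from reflects that this term was never treated.

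The missing step is a decoupling, in three moves:
\begin{enumerate}
\item Freeze $F^3(t,X^0_t)$ at $F^3(k\Delta,X^0_{k\Delta})$, at cost $C\Delta^{1/2}$ using $\gamma^3\ge 1/2$, and move $\tau$ to the grid, at cost $C\Delta$. The frozen factor $Z_k=\mathbf 1_{\{\tau\ge k\Delta\}}F^3(k\Delta,X^0_{k\Delta})$ is $\mathcal F^{\bm{W}}_{k\Delta}$-measurable.
\item Since the increments of $W^{\text{c}}$ after $k\Delta$ are independent of $\mathcal F^{\bm{W}}_{k\Delta}$, for $t\ge k\Delta$ you have $\Ee[Z_k\,\Ee[\xi_t|\mathcal F^{\text{c}}_t]]=\Ee[\Ee[Z_k|\mathcal F^{\text{c}}_{k\Delta}]\,\xi_t]$. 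Hence each block is bounded by $C\,\Ee\big|\Ee\big[\int_{k\Delta}^{(k+1)\Delta}\xi_t\,dt\,\big|\,\mathcal F^{\bm{W}}_{k\Delta}\big]\big|$.
\item The paper bounds this by the $L^2$ norm of the block integral, which is $O(\sqrt\delta)$ by Lemma~\ref{lem:supL2_H}. Summing over $T/\Delta$ blocks gives $C(\Delta^{1/2}+\sqrt\delta/\Delta)$, so $\Delta=\delta^{1/3}$ and $\varepsilon=C\delta^{1/6}$.
\end{enumerate}
Your block Poisson identity, combined with optional stopping at $\tau^\star$ so that the martingale part vanishes under $\Ee[\cdot|\mathcal F^{\bm{W}}_{k\Delta}]$, would give a block bias of $O(\delta)$ and hence $\delta^{1/3}$. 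But that only works once this decoupling is in place. A minor point: $t\mapsto\langle\hat f_1,m^\star_t\rangle$ jumps at atoms of $\tau^\star$, so its freezing cost must come from an integrated estimate (the paper uses its bounded variation, proved in Lemma~\ref{lemmaM}), not from H\"older continuity in $t$.
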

\begin{proof}
Let $\tau \in \mathcal{T}(\mathbb{F}^{\mathbf{W}})$. Since  $(\tau^\star,m^\star,\mu^\star)$ is an effective MFG equilibrium with strict stopping, Proposition \ref{pro:sup equality} applies, showing that
\begin{align}
\label{eq:Strong Second term bound}
J^0(\tau, m^\star, \mu^\star) - J^0(\tau^\star, m^\star, \mu^\star) \leq 0.
\end{align}
Therefore, 
\begin{align}\label{eq:Aux delta strong 1}
J^\delta(\tau,m^{\delta,\star},\mu^{\delta,\star})-J^\delta(\tau^\star,m^{\delta,\star}, \mu^{\delta, \star}) & \leq \Delta_\delta (\tau) + \Delta_\delta (\tau^\star) 
\end{align}
with $\Delta_\delta (\tau):=\left|J^\delta(\tau,m^{\delta,\star}, \mu^{\delta, \star})-J^{0}(\tau,m^{\star}, \mu^{\star})\right|$.

We next bound the remaining terms in \eqref{eq:Aux delta strong 1}, splitting the proof according to the two sets of assumptions. Throughout the following calculations, the constant $C$ may change from line to line but remains independent of $\delta$.

\paragraph{Assume Poisson equation regime (Assumptions \ref{hyp:Dynamics_Poisson_X_Y} and \ref{hyp:Cost Poisson OS}) holds.} We write 
\begin{align}
\label{eq:Strong first term main}
& \Delta_\delta(\tau)
= \left| \Ee \left[ \int_0^\tau \left( \tilde{A}_t + A_t \right) dt + B \right] \right|,
\end{align}
where
\begin{align*}
\tilde{A}_t & = F^1(t,X^\delta_t,Y^\delta_t) F^2\left(t,X^\delta_t, \langle \hat{f}_1, m^{\delta ,\star(\text{x})}_t\rangle\right) - \bar{F}^1(t,X^0_t) F^2\left(t,X^0_t,\langle \hat{f}_1, m^{\star}_t\rangle\right),
\\
A_t & = F^3(t,X^\delta_t) \langle \hat{f}_2, m^{\delta ,\star}_t\rangle - F^3(t,X^0_t) \langle \bar{\hat{f}}_2, m^{\star}_t\rangle,
\\
B& = G\left( \tau, X^\delta_\tau, \mathbb E\left[\langle \hat{g}, \mu^{\delta , \star(\text{t},\text{x})}\rangle|\mathcal F^c_\tau\right] \right) - G\left( \tau, X^0_\tau , \langle \hat{g}, \mu^{\star} \rangle \right).
\end{align*}
{\bf \textit{Step 1: Bound on $\tilde{A}_t$.}} Let us write
\begin{align}
\label{eq:tildeA expand}
\tilde{A}_t & = \tilde{A}^{1}_t  + \tilde{A}^{2}_t + \tilde{A}^{3}_t,
\end{align}
where
\begin{align}
\label{eq:tildeA1 expand}
\tilde{A}^{1}_t & = F^2\left(t,X^0_t, \langle \hat{f}_1, m^{\star}_t \rangle\right) \left( F^1(t,X^\delta_t,Y^\delta_t) - \bar{F}^1(t,X^\delta_t) \right),
\\
\tilde{A}^{2}_t & = F^2\left(t,X^0_t, \langle \hat{f}_1, m^{\star}_t \rangle\right) \left( \bar{F}^1(t,X^\delta_t) - \bar{F}^1(t,X^0_t) \right),\\
\tilde{A}^{3}_t & = F^1(t,X^\delta_t,Y^\delta_t) \left( F^2\!\left(t,X^\delta_t, \langle \hat{f}_1, m^{\delta, \star(\text{x})}_t \rangle\right)  - F^2\!\left(t,X^0_t, \langle \hat{f}_1, m^{\star}_t \rangle\right)\right).
\end{align}
The estimate for the integral involving $\tilde A^1$ follows from Lemma \ref{lemmaM}.
Since $F^2$ is bounded and $\bar{F}^1$ is Lipschitz, we estimate the integral involving $\tilde{A}^{2}_t$ using Proposition \ref{pro:Strong rate sup}.
For the integral involving $\tilde{A}^3_t$, the bound on $F^1$, the Lipschitz continuity of $F^2$ and $\hat{f}_1$, and Proposition \ref{pro:Strong rate sup} yield
\begin{align}
\label{eq:tildeA2 bound final}
& \left| \Ee \left[ \int_0^\tau \tilde{A}^{3}_t dt \right] \right|  \leq C \sqrt{\delta}.
\end{align}
Combining these estimates, we obtain 
\begin{align}
\label{eq:tildeA bound final}
\left| \Ee \left[ \int_0^\tau \tilde{A}_t dt \right] \right|  \leq C \sqrt{\delta}.
\end{align}
{\bf \textit{Step 2: Bound on $A_t$.}} Let us write
\begin{align}
\label{eq:A expand}
A_t & = A^{1}_t + A^{2}_t + A^3_t,
\end{align}
where
\begin{align*}A^{1}_t &= \langle \hat{f}_2, m^{\delta,\star}_t \rangle ( F^3(t,X^\delta_t) - F^3(t,X^0_t) ),\\
  A^{2}_t &= F^3(t,X^0_t)  ( \langle \bar{\hat{f}}_2, m^{\delta,\star}_t \rangle - \langle \bar{\hat{f}}_2, m^{\star}_t \rangle ),\\
 A^{3}_t & =  F^3(t,X^0_t) ( \langle \hat{f}_2, m^{\delta,\star}_t \rangle - \langle \bar{\hat{f}}_2, m^{\delta,\star}_t \rangle ).
\end{align*}                                                                                                                               
The boundedness of $\hat{f}_2$, the Lipschitz continuity of $F^3$, and Proposition \ref{pro:Strong rate sup} show that
\begin{align}
\label{eq:A1 bound 1}
\Ee\left[|A^{1}_t|\right] & \leq C \Ee\left[ \left| X^\delta_t - X^0_t \right| \right] \leq C \sqrt{\delta}.
\end{align}
Boundedness of $F^3$, Lipschitz continuity of $\bar{\hat{f}}_2$, and Proposition \ref{pro:Strong rate sup} yield
\begin{align}
\label{eq:A22 bound}
& \left| \Ee\! \left[ \int_0^\tau A^{2}_t dt \right] \right| \!\leq C \Ee\! \left[ \int_0^T \!\!\! \Ee[ | \bar{\hat{f}}_2(X^\delta_t) - \bar{\hat{f}}_2(X^0_t) | \big| \mathcal{F}^{\text{c}}_t ] dt \right] \!\leq C \int_0^T \!\!\!\Ee [ |X^\delta_t - X^0_t |] dt \leq C \sqrt{\delta}.
\end{align}
To bound the term $A^{3}_t$, divide $[0,T]$ into intervals of size $\Delta$ to be chosen later, and write $$
\theta^\Delta_t:=\sup\{n\Delta:\Delta n \leq t\}.
$$
Using the regularity of the coefficients in \eqref{eq:dynamics effective} provided by Assumption \ref{hyp:Dynamics_Poisson_X_Y}, we obtain the existence of a constant $C>0$ independent of $\delta$ such that
\[
\Ee [ | X^0_t - X^0_s |^2 ] \leq C |t-s|.
\]
Boundedness of $F^3$, H\"{o}lder continuity of the map $t \mapsto F^3(t,x)$ with exponent $ \gamma^3 \geq 1/2$, and Lipschitz continuity of the map $F^3$, combined with the previous bound, yield
\begin{align*}
\Ee \left[  \left|F^3(t,X^0_t)-F^3(\theta^\Delta_t,X^0_{\theta^\Delta_t}) \right| \right] &\leq \Ee \left[  \left|F^3(t,X^0_t)-F^3(t,X^0_{\theta^\Delta_t}) \right| \right] + \Ee \left[  \left|F^3(t,X^0_{\theta^\Delta_t})-F^3(\theta^\Delta_t,X^0_{\theta^\Delta_t}) \right| \right] \notag
\\
& \leq C \Ee \left[ \left|X^0_t-X^0_{\theta^\Delta_t}\right| \right] + C \left|t-\theta^\Delta_t\right|^{\gamma^3} \notag
\\
& \leq C\left(\Delta^{1/2} + \Delta^{\gamma^3} \right)  \leq C \Delta^{1/2}.
\end{align*}

Further, using the boundedness of $\hat f_2$ and $F^3$, we write: 
\begin{align*}
  &\left| \Ee \left[ \int_0^\tau A^{3}_t dt \right] \right|  \leq \Delta \mathbb E\left[\sup_{0\leq t\leq T} |A^3_t|\right] + \left| \Ee \left[ \int_0^{\theta^\Delta_\tau} A^{3}_t dt \right] \right|\\ &\leq C\Delta + \left| \Ee \left[ \int_0^{\theta^\Delta_\tau}  ( F^3(t,X^0_t)  - F^3(\theta^\Delta_t,X^0_{\theta^\Delta_t}) ) ( \langle \hat{f}_2, m^{\delta,\star}_t \rangle - \langle \bar{\hat{f}}_2, m^{\delta,\star}_t \rangle ) dt \right] \right| \\ &+ \left| \sum_{k=0}^{\lfloor T/\Delta \rfloor} \Ee \left[ \int_{k\Delta}^{(k+1)\Delta} \mathbf 1_{k\Delta\leq \theta^\Delta_\tau} F^3(k\Delta,X^0_{k\Delta})   ( \langle \hat{f}_2, m^{\delta,\star}_t \rangle - \langle \bar{\hat{f}}_2, m^{\delta,\star}_t \rangle ) dt \right] \right| \\ &\leq C\Delta^{1/2} + \left|\sum_{k=0}^{\lfloor T/\Delta \rfloor} \Ee \left[ \mathbf 1_{k\Delta\leq \theta^\Delta_\tau} F^3(k\Delta,X^0_{k\Delta}) \int_{k\Delta}^{(k+1)\Delta}  ( \langle \hat{f}_2, m^{\delta,\star}_t \rangle - \langle \bar{\hat{f}}_2, m^{\delta,\star}_t \rangle ) dt \right]\right| \\
                                                           &\leq C\Delta^{1/2} + \left|\sum_{k=0}^{\lfloor T/\Delta \rfloor} \Ee \left[ \mathbf 1_{k\Delta\leq \theta^\Delta_\tau} F^3(k\Delta,X^0_{k\Delta}) \int_{k\Delta}^{(k+1)\Delta}  \mathbb E[ (  \hat{f}_2(X^\delta_t,Y^\delta_t) - \bar{\hat{f}}_2(X^\delta_t)  ) \mathbf 1_{(t,T]}(\tau^\star)|\mathcal F^c_t] dt \right]\right|\\
  &\leq C\Delta^{1/2} + \left|\sum_{k=0}^{\lfloor T/\Delta \rfloor} \Ee \left[ \mathbf 1_{k\Delta\leq \theta^\Delta_\tau} F^3(k\Delta,X^0_{k\Delta}) \mathbb E\left[\int_{k\Delta}^{(k+1)\Delta}  (  \hat{f}_2(X^\delta_t,Y^\delta_t) - \bar{\hat{f}}_2(X^\delta_t)  ) \mathbf 1_{(t,T]}(\tau^\star)dt|\mathcal F^{\mathbf W}_{k\Delta} \right] \right]\right|\\
  &\leq C\Delta^{1/2} + C\sum_{k=0}^{\lfloor T/\Delta \rfloor} \Ee \left[\left| \mathbb E\left[\int_{k\Delta}^{(k+1)\Delta}  (  \hat{f}_2(X^\delta_t,Y^\delta_t) - \bar{\hat{f}}_2(X^\delta_t)  ) \mathbf 1_{(t,T]}(\tau^\star)dt|\mathcal F^{\mathbf W}_{k\Delta} \right]\right| \right]\\
  &\leq C\Delta^{1/2} + C\sum_{k=0}^{\lfloor T/\Delta \rfloor} \Ee \left[\left(\int_{k\Delta}^{(k+1)\Delta}  (  \hat{f}_2(X^\delta_t,Y^\delta_t) - \bar{\hat{f}}_2(X^\delta_t)  ) \mathbf 1_{(t,T]}(\tau^\star)dt\right)^2\right]^{\frac{1}{2}}
\end{align*}
By Lemma \ref{lem:supL2_H} we then conclude that
$$
\left| \Ee \left[ \int_0^\tau A^{3}_t dt \right] \right|  \leq C \Delta^{1/2} + \frac{C\sqrt{\delta}}{\Delta}. 
$$
The sharpest rate, is obtained by choosing $\Delta = \delta^\frac{1}{3}$. With this choice, we get
\begin{align}
\label{eq:A21 bound final}
\left| \Ee \left[ \int_0^\tau A^{3}_t dt \right] \right|  \leq C \delta^{\tfrac{1}{6}}
\end{align}
and, using \eqref{eq:A22 bound} and \eqref{eq:A1 bound 1}, 
\begin{align}
\label{eq:A bound final}
\left| \Ee \left[ \int_0^\tau A_t dt \right] \right|  \leq C \delta^{\tfrac{1}{6}}.
\end{align}
{\bf\textit{Step 3: Bound on \eqref{eq:Aux delta strong 1}.}} For the term $B$ in \eqref{eq:Strong first term main}, we use the Lipschitz continuity of $G$ and $\hat{g}$  in Assumption \ref{hyp:Cost Poisson OS} together with Proposition \ref{pro:Strong rate sup} to get
\begin{align}
 \left| \Ee [B] \right| &\leq C \Ee \left[| X^\delta_\tau - X^0_\tau | + | \mathbb E\left[\langle \hat{g}, \mu^{\delta, \star(\text{t},\text{x})} \!- \mu^{\star} \rangle|\mathcal F^c_\tau\right] | \right] \notag\\ &\leq\! C ( \sqrt{\delta} + \Ee \left[ |\hat{g}(\tau^{\star}, X^\delta_{\tau^{\star}}) - \hat{g}(\tau^{\star}, X^0_{\tau^{\star}}) | \right] ) \!\leq\! C \sqrt{\delta}. \label{eq:terminalcost B1 B2 bound}
\end{align}
From \eqref{eq:Strong first term main}, \eqref{eq:tildeA bound final}, \eqref{eq:A bound final}, and \eqref{eq:terminalcost B1 B2 bound}, we obtain 
\begin{align}
\label{eq:Strong First term bound}
 \Delta_\delta(\tau) \leq  C \delta^{\tfrac{1}{6}},
\end{align}
where $C>0$ is independent of $\delta$. Since  $\tau \in \mathcal{T}(\mathbb{F}^{\bm{W}})$ is arbitrary, we also get
\begin{align}
\label{eq:Strong Third term bound}
 \Delta_\delta(\tau^\star) \leq C \delta^{\tfrac{1}{6}},
\end{align}
Therefore, \eqref{eq:Aux delta strong 1}, \eqref{eq:Strong Second term bound}, \eqref{eq:Strong First term bound}, and \eqref{eq:Strong Third term bound} show that
\begin{align*}
& J^\delta(\tau, m^{\delta,\star}, \mu^{\delta,\star}) - J^{\delta}(\tau^\star, m^{\delta,\star}, \mu^{\delta,\star})\leq C \delta^{\tfrac{1}{6}}.
\end{align*}
Because $\tau \in \mathcal{T}(\mathbb{F}^{\bm{W}})$ is arbitrary, we conclude that $(m^{\delta , \star},\mu^{\delta , \star})$ is a two--scale $\varepsilon$--MFG equilibrium for $\varepsilon = C \delta^{1/6}$, where $C$ is independent of $\delta$.
\vskip0.3cm
\paragraph{Assume Explicit OU regime (Assumptions \ref{hyp:Dynamics_Discret_X_Y} and \ref{hyp:Cost discretization OS}) holds.} Similarly to the first part we write 
\begin{align}
\label{eq:costs2_simplified-00}
\Delta_\delta(\tau)= \left| \Ee \left[ \int_0^\tau  A_t  dt + B \right]\right|, 
\end{align}
where the term $B$ has exactly the same form as in the first part, and is estimated in the same way, using the second part of Proposition \ref{pro:Strong rate sup} in place of the first part. It remains therefore to estimate the term $A$: 
\begin{align*}
A_t & = F\!\left(t,X^\delta_t, Y^\delta_t, \Ee \left[ \hat{f}(X^\delta_t,Y^\delta_t) \mathbf 1_{(t,T]}(\tau^\star)\bigg| \mathcal{F}^{\text{c}}_t  \right] \right)
\\
& \quad - \int_{\Rr^2} F\!\left(t,X^0_t,y, \Ee \left[ \hat{F}(X^0_{t},y') \mathbf 1_{(t,T]}(\tau^\star) \right]\right) \bar{\mu}'(dy, dy').
\end{align*}
Writing
\begin{align*}
A^{1}_t &=  F(t,X^\delta_t, Y^\delta_t,\Ee[ \hat{f}(X^\delta_{t},Y^\delta_t) \mathbf 1_{(t,T]}(\tau^\star) | \mathcal{F}^{\text{c}}_t ] ) -F(t,X^0_t, Y^\delta_t, \Ee[ \hat{f}(X^\delta_{t},Y^\delta_t) \mathbf 1_{(t,T]}(\tau^\star) | \mathcal{F}^{\text{c}}_t ] ) , \notag
\\
A^{2}_t & = F(t,X^0_t, Y^\delta_t, \Ee[ \hat{f}(X^\delta_{t},Y^\delta_t) \mathbf 1_{(t,T]}(\tau^\star) | \mathcal{F}^{\text{c}}_t ] ) - F(t,X^0_t, Y^\delta_t,\Ee[ \hat{f}(X^0_{t},Y^\delta_t) \mathbf 1_{(t,T]}(\tau^\star) | \mathcal{F}^{\text{c}}_t ] ) , \notag
  \\
A^{3}_t &=  F(t,X^0_t, Y^\delta_t,  \Ee [ \hat{F}_t(X^0_t,\hat Y^\delta_t) \mathbf 1_{(t,T]}(\tau^\star) | \hat Y^\delta_t] ) -  F(t,X^0_t, Y^\delta_t,  \Ee [ \hat{F}(X^0_t,\hat Y^\delta_t) \mathbf 1_{(t,T]}(\tau^\star)| \hat Y^\delta_t ] ),\\
A^{4}_t &=  F(t,X^0_t, Y^\delta_t,  \Ee [ \hat{F}(X^0_t,\hat Y^\delta_t) \mathbf 1_{(t,T]}(\tau^\star)| \hat Y^\delta_t ] )  - \int_{\Rr^2} F(t, X^0_{t} , y, \Ee [ \hat{F}(X^0_t,y')  \mathbf 1_{(t,T]}(\tau^\star) ] ) \bar{\mu}'(dy,dy'),
\end{align*}
we will provide estimates for each term in the subsequent steps. 

{\bf \textit{Step 1: Immediate bounds}} Using the Lipschitz continuity and boundedness from Assumption \ref{hyp:Cost discretization OS}, together with Proposition \ref{pro:Strong rate sup}, we obtain, 
\begin{align}
\label{eq:costs2_simplified-0002}
& \Ee \left[ \left| \mathbf 1_{(t,T]}(\tau) A^{1}\right|\right], \Ee \left[ \left| \mathbf 1_{(t,T]}(\tau) A^{2}\right|\right]\leq C \delta^{\tfrac{1}{3}}.
\end{align}
To estimate the term $A^{3}_t$, recall that $\tilde Y_t = Y^\delta_t - \hat Y^\delta_t$ is independent from $\mathcal F^c_t$ and follows the centered Gaussian law with variance $v^1_t:=\frac{\sigma_1^2}{2\kappa}(1-e^{-\frac{2\kappa t}{\delta}})$. Thus, by the Lipschitz property of $F$ and $\hat f$, 
\begin{align}
\mathbf E[|\mathbf 1_{(t,T]}(\tau)A^{3}_t|]&\leq C\mathbb E[|\hat F_t(X^0_t,\hat Y^\delta_t) -\hat F(X^0_t,\hat Y^\delta_t) |]\notag\\ &\leq C\mathbb E\left[\int_{\mathbb R} e^{-\frac{x^2}{2}} |\hat f(X^0_t,x\sqrt{v^1_t} + \hat Y^\delta_t) -\hat f(X^0_t,x\sqrt{v^1_\infty} + \hat Y^\delta_t)  |\right]\notag\\&\leq C |\sqrt{v^1_t} - \sqrt{v^1_\infty}| \leq C \left(1-\sqrt{1-e^{-\frac{2\kappa t}{\delta}}}\right)\label{varest1}
\end{align}
Thus,
\begin{align}
\left| \Ee \left[ \int_0^\tau A^{3}_t dt \right] \right| \leq C \int_0^T \left(1-\sqrt{1-e^{-\frac{2\kappa t}{\delta}}}\right) dt \leq  C \delta \int_0^\infty \left(1-\sqrt{1-e^{-{2\kappa t}}}\right) dt = C\delta. \label{varest2}
\end{align}

{\bf \textit{Step 2: Bounds using the discretization procedure}} Next, we derive the estimate for $A^{4}_t$. Divide $[0,T]$ into intervals of size $\Delta$ to be chosen later. Using the notation introduced in the first part of the proof and the boundednes of $F$, we write:
\begin{align}
\label{eq:costs2_simplified-0021}
  \left| \Ee \left[ \int_0^\tau A^{4}_t dt \right] \right|  &\leq \Delta \mathbb E\left[\sup_{0\leq t\leq T} |A^{4}_t|\right] + \left| \Ee \left[ \int_0^{\theta^\Delta_\tau} A^{4}_t dt \right] \right|\notag\\ &\leq C\Delta + \sum_{k=0}^{\lfloor T/\Delta \rfloor-1} \left|\Ee\left[ \mathbf 1_{k\Delta \leq \theta^\Delta_\tau} \int_{k\Delta}^{(k+1)\Delta} A^{4}_t dt \right] \right|
\end{align}
From Assumption \ref{hyp:Dynamics_Discret_X_Y}, for $t \in [k \Delta, (k+1) \Delta)$, we easily deduce that $\mathbb E[|X^0_t - X^0_{k\Delta}|] \leq C\Delta^{\frac{1}{2}}$. Let us denote
$$
\tilde F(t,k\Delta,y,y')  = F(t,X^0_{k\Delta}, y,  \Ee [ \hat{F}(X^0_{t},y') \mathbf 1_{(t,T]}(\tau^\star) ] )
$$
and
$$
\xi(y,t) = \theta + (y -\theta)e^{-\frac{\kappa t}{\delta}}.
$$
It is easy to check that $\tilde F$ is Lipschitz in $y$ and $y'$, leading to the following estimate:
\begin{align*}
& \left| \Ee \left[ \tilde F(t,k\Delta, Y^\delta_t, \hat{Y}^\delta_t) - \int_{\Rr^2} \tilde F(t,k\Delta, y, y') \bar{\mu}'(dy,dy')|\mathcal F^{\mathbf W}_{k\Delta} \right]\right|\\
  & = \frac{1}{2\pi}\left|\int_{\mathbb R^2} e^{-\frac{x_1^2}{2} - \frac{x_2^2}{2}} \left( \tilde F(t,k\Delta, \xi(Y^\delta_{\Delta k},t-\Delta k)+ x_1\sqrt{v^1_{t-\Delta k}} + x_2 \sqrt{v^2_{t-\Delta k}}, \xi(Y^\delta_{\Delta k},t-\Delta k)+ x_2 \sqrt{v^2_{t-\Delta k}})\right.\right.\\
  & \qquad \qquad- \left. \left. \tilde F(t,k\Delta, \theta+ x_1\sqrt{v^1_{\infty}} + x_2 \sqrt{v^2_{\infty}}, \theta+ x_2 \sqrt{v^2_{\infty}})\right)dx_1\, dx_2 \right|\\
  &\leq C \int_{\mathbb R^2}   e^{-\frac{x_1^2}{2} - \frac{x_2^2}{2}} \left(|\xi(Y^\delta_{\Delta k},t-\Delta k) - \theta| + |x_1|(\sqrt{v^1_\infty}-\sqrt{v^1_{t-\Delta k}} ) + |x_2|(\sqrt{v^2_\infty}-\sqrt{v^2_{t-\Delta k}})  \right)\\
  &\leq  C \left(|Y^\delta_{\Delta k}-\theta| e^{-\frac{\kappa(t-\Delta k)}{\delta}}  + \sqrt{v^1_\infty}-\sqrt{v^1_{t-\Delta k}}  + \sqrt{v^2_\infty}-\sqrt{v^2_{t-\Delta k}} \right)
\end{align*}
Plugging this estimate into the integral term, we find
\begin{align}
  &\left|\Ee\left[ \mathbf 1_{k\Delta \leq \theta^\Delta_\tau} \int_{k\Delta}^{(k+1)\Delta} A^{4}_t dt \right] \right|  \notag\\
  & \leq C\Delta^{\frac{3}{2}} +  \left|\Ee \left[\mathbf 1_{k\Delta \leq \theta^\Delta_\tau} \int_{k\Delta}^{(k+1)\Delta} \tilde F(t,k\Delta, Y^\delta_t, \hat{Y}^\delta_t) - \int_{\Rr^2} \tilde F(t,k\Delta, y, y') \bar{\mu}'(dy,dy') dt \right]\right|,\notag\\
  & \leq C\Delta^{\frac{3}{2}} +  C\int_{0}^{\Delta} \left( e^{-\frac{\kappa t}{\delta}}\Ee|Y^\delta_{\Delta k}-\theta|+e^{-\frac{\kappa t}{\delta}}\Ee|\hat Y^\delta_{\Delta k}-\theta|  + \sqrt{v^1_\infty}-\sqrt{v^1_{t}}  + \sqrt{v^2_\infty}-\sqrt{v^2_{t}} \right) dt \notag\\
  & \leq C \Delta^{\frac{3}{2}} + C \delta (\Ee|Y^\delta_{\Delta k}-\theta|+\Ee|\hat Y^\delta_{\Delta k}-\theta| + 1),\label{varest_final}
\end{align}
where the integrals involving $v^1_t$ and $v^2_t$ are estimated as in (\ref{varest1}--\ref{varest2}).

By Assumption \ref{hyp:Dynamics_Discret_X_Y}--\ref{hyp:Dynamics_Discret_X_Y IC}, the initial distributions have finite second moments, which yields bounds on those of $Y^\delta_t$, and $\hat{Y}^\delta_t$ independent of $\delta$. By \eqref{eq:costs2_simplified-0021} and \eqref{varest_final} we therefore obtain
\begin{align}
\label{eq:costs2_simplified-00237}
& \left|\Ee\left[\int_0^\tau  A^{4}_t dt\right]\right| \leq C \Delta^{\frac{1}{2}} + C\frac{\delta}{\Delta}. 
\end{align}
The optimal rate $\delta^{\frac{1}{3}}$ is obtained by taking $\Delta \sim \delta^{\frac{2}{3}}$. 

{\bf \textit{Step 3: Bound on \eqref{eq:Aux delta strong 1}}}
By \eqref{eq:costs2_simplified-00}, \eqref{eq:costs2_simplified-0002},\eqref{varest2} and \eqref{eq:costs2_simplified-00237}, we obtain 
\begin{align}
\label{eq:costs2_simplified-002378}
& \Delta_\delta(\tau) \leq C \delta^{1/3}.
\end{align}
and similarly $\Delta_\delta(\tau^\star) \leq C \delta^{1/3}$. Together with \eqref{eq:Aux delta strong 1}, \eqref{eq:Strong Second term bound}, this shows that
\begin{align*}
& J^\delta(\tau, m^{\delta,\star}, \mu^{\delta,\star}) - J^{\delta}(\tau^\star, m^{\delta,\star}, \mu^{\delta,\star})\leq C \delta^{\tfrac{1}{3}}.
\end{align*}
Because $\tau \in \mathcal{T}(\mathbb{F}^{\bm{W}})$ is arbitrary, we conclude that $(m^{\delta , \star},\mu^{\delta , \star})$ is a two--scale $\varepsilon$--MFG equilibrium for $\varepsilon = C \delta^{1/3}$, where $C$ is independent of $\delta$.
\end{proof}
\begin{remark}
\label{rem:Poisson orders}
In Assumption \ref{hyp:Cost Poisson OS}, the dependence of $\hat{f}_2$ on the fast-scale variable $y$ reduces the approximation order in Theorem \ref{thm:Existence 2S epsilon MFG eq} to $1/6$. Under Poisson equation regiume, {\bf \textit{Step 1}} of the proof shows, through \eqref{eq:tildeA bound final}, that the cost contribution $F^1 F^2$ is bounded with order $1/2$. However, in {\bf \textit{Step 2}}, the discretization procedure requires a step-size $\Delta$, whose sharpest bound lowers the order to $1/6$. Consequently, if $\hat{f}_2$ in Assumption \ref{hyp:Cost Poisson OS} is independent of the fast scale, Theorem \ref{thm:Existence 2S epsilon MFG eq} recovers the higher order $1/2$.

In contrast, under Assumption \ref{hyp:Cost discretization OS}, the dependence of $\hat{f}$ on the fast-scale variable $y$ necessitates the discretization procedure. Under explicit OU regime, {\bf \textit{Step 1}} shows, via \eqref{eq:costs2_simplified-0002}, that the cost contributions without fast-scale dependence on the measure are bounded with order $1/3$, a rate inherited from Proposition \ref{pro:Strong rate sup}. In {\bf \textit{Step 2}}, involving the discretization procedure, the step can be chosen appropriately to recover the same order of strong convergence.
\end{remark}

\begin{remark}
\label{rem:Common noise and orders}    
In Poisson equation regime, the discretization in {\bf \textit{Step 2}} arises from the presence of common noise. We use Lemma \ref{lem:supL2_H}, based on the Poisson equation method, to estimate expectations of integrals. The presence of common noise, however, introduces a conditional expectation into these integrals, as reflected in the term. In the absence of common noise, these terms can instead be estimated directly by differentiating, which yields the sharper convergence rate of order $1/2$.
\end{remark}
\begin{remark}
\label{rem:Linear dynamics fast}
The Ornstein–Uhlenbeck dynamics for the fast process in Assumptions \ref{hyp:Dynamics_Discret_X_Y}--\ref{hyp:Dynamics_Discret_X_Y A1} satisfy the more general condition
\begin{align*}
&|B(y_1) - B(y_2)| + |\Sigma(y_1) - \Sigma(y_2)| + |\hat{\Sigma}(y_1) - \hat{\Sigma}(y_2)| \leq C |y_1 - y_2|, 
\\
&2\langle B(y_1) - B(y_2), y_1 - y_2 \rangle + 3|\Sigma(y_1) - \Sigma(y_2)|^2 + 3|\hat{\Sigma}(y_1) - \hat{\Sigma}(y_2)|^2 \leq -\beta |y_1 - y_2|^2,
\end{align*}
for some positive constants $C$ and $\beta$. Under this condition and Assumption \ref{hyp:Dynamics_Discret_X_Y}--\ref{hyp:Dynamics_Discret_X_Y A2}, \cite{rockner2021strong} shows—via the discretization method—that strong convergence holds for McKean–Vlasov dynamics with order $1/3$. In our framework with common noise, we establish the existence of approximate equilibria by restricting the fast variable to linear dynamics.
\end{remark}

\subsection{Two-scale $\varepsilon$-MFG equilibrium with \textit{randomized stopping}}

Since MFG equilibria with strict stopping do not necessarily exist \cite{bertucci2018optimal}, to ensure existence of equilibria, the class of stopping strategies must be enlarged to include randomized stopping times. In this section, we construct the $\varepsilon$-MFG equilibrium with randomized stopping for the two-scale problem assuming existence of an MFG equilibrium with randomized stopping for the effective problem; such existence results will be provided in Section \ref{Sec:OS existence effective}. 

We begin by introducing the concepts of \textit{two-scale MFG equilibrium with randomized stopping} and their \textit{$\varepsilon$}-approximate counterparts. As a preliminary step, we define the admissible strategy set in this framework, which is characterized by a specific class of randomization kernels. More precisely, let
\begin{align*}
\mathcal{K}^{\bm{W}} := \left\{
\begin{aligned}
& k:\Omega \times \mathcal{B}([0,T])  \to [0,1]: \, k \text{ is a regular probability kernel and }
\\
&\text{for all }0\leq t \leq T \text{ and } B\in \mathcal{B}([0,t]),\, k(\cdot,B) \text{ is } \mathcal{F}^{\bm{W}}_t\text{--measurable}
\end{aligned}
\right\}.
\end{align*}
For a given $k \in \mathcal{K}^{\bm{W}}$, define the associated cost functional:
\begin{align*}
& J^\delta(k,m,\mu):=\mathbb{E}\left[\int_0^T f(t,X_t^\delta,Y_t^\delta,m_t)k(\cdot,(t,T])dt+ \int_0^T g(t, X_t^\delta,\mu^c_t)k(\cdot,dt)\right],
\end{align*}

We now define a \textit{two-scale $\varepsilon$-MFG and MFG equilibrium with randomized stopping}.
\begin{definition}[Two-scale $\varepsilon$-MFG equilibrium with \textit{randomized} stopping]
\label{def:2S e--MFG eq with randomized stop}
Let $\epsilon\geq 0$ and $\delta > 0$. We say that $(k^{\varepsilon,\delta},m^{\varepsilon, \delta}, \mu^{\varepsilon, \delta})$ is a \textit{two-scale $\varepsilon$--MFG equilibrium with randomized stopping} if:
\begin{itemize}
\item[(i)] For all $k' \in \mathcal{K}^{\bm{W}}$,
\begin{align*}
& J^\delta(k^{\varepsilon,\delta}, m^{\varepsilon,\delta}, \mu^{\varepsilon,\delta}) \geq J^\delta(k', m^{\varepsilon,\delta}, \mu^{\varepsilon,\delta}) - \varepsilon,
\end{align*}
\item[(ii)] $(m^{\varepsilon,\delta}_t)_{0 \leq t \leq T}\in \mathbb V(\mathbb R\times \mathbb R)$  and $\mu^{\varepsilon,\delta} \in \mathbb M(\mathbb R\times \mathbb R)$ such that
\begin{align*}
& m^{\varepsilon, \delta}_t(B) = \Ee\left[ \textbf{1}_{B}(X^{\delta}_t, Y^\delta_t)k^{\varepsilon,\delta}(\cdot,(t,T])\bigg| \mathcal{F}^{\text{c}}_t\right] \text{a.s.} \quad B \in \mathcal{B}(\mathbb R \times \Rr),\,\, \quad t \in [0,T], \notag 
\\
& \mu^{\varepsilon, \delta} (B) = \Ee\left[ \int_0^T \textbf{1}_B(\theta,X^\delta_\theta,Y^\delta_\theta) k^{\varepsilon, \delta}(\cdot,d\theta) \bigg| \mathcal{F}^{\text{c}}_T \right]\,\text{a.s.}, \quad B \in \mathcal{B}([0,T]\times \mathbb R \times \Rr).
\end{align*}
\end{itemize}
When $\varepsilon=0$, we drop the superscript and say that $(k^\delta,m^\delta,\mu^\delta)$ is a \textit{two-scale MFG equilibrium with randomized stopping}. 
\end{definition}

\textcolor{black}{To study the existence of the approximate MFG equilibrium for the two-scale problem (cf. Definition \ref{def:2S e--MFG eq with randomized stop}), we introduce the concept  \textit{of an effective MFG equilibrium with randomized stopping}
We first define the following class of randomized stopping times for the \textit{effective} MFG problem. Let
\begin{align*} 
\mathcal{K}^{\text{x}} := \left\{
\begin{aligned}
& k:\Omega \times \mathcal{B}([0,T]) \to [0,1]: \, k \text{ is a regular probability kernel and }
\\
&\text{for all }0\leq t \leq T \text{ and } B\in \mathcal{B}([0,t]),\, k(\cdot,B) \text{ is } \mathcal{F}^{\text{x}}_t\text{--measurable}
\end{aligned}
\right\},
\end{align*}
and for $k\in \mathcal{K}^{\text{x}}$ define
\begin{align*}
& J^0(k,m,\mu):=\mathbb{E}\left[\int_0^T \bar{f}(t,X_t^0,m_t)k(\cdot,(t,T])dt+ \int_0^T g(t, X_t^0,\mu)k(\cdot,dt)\right],
\end{align*}
with $\bar{f}$ given by either \eqref{payoff1} or \eqref{payoff2}.
\begin{definition}[Effective MFG equilibrium with \textit{randomized} stopping]\label{def:exist_effective}
We say that $(k,m,\mu)$ is an \textit{effective MFG equilibrium with randomized stopping} if:
\begin{itemize}
\item[(i)] For all $k' \in \mathcal{K}^{\text{x}}$,
\begin{align*}
& J^0(k,m,\mu) \geq J^0(k',m,\mu),
\end{align*}
\item[(ii)] $(m_t)_{0 \leq t \leq T}\in \mathcal V(\mathbb R)$  and $\mu\in \mathcal P([0,T]\times \mathbb R)$ such that 
\begin{align*}
& m_t(B) = \Ee\left[ \textbf{1}_{B}(X^0_t)k(\cdot,(t,T])\right], \quad B \in \mathcal{B}(\mathbb R),\,\, \quad t \in [0,T], \notag \\
& \mu(B) = \Ee\left[ \int_0^T \textbf{1}_B(\theta,X^0_\theta) k(\cdot,d\theta) \right], \quad B \in \mathcal{B}([0,T]\times \mathbb R).
\end{align*}
\end{itemize}
\end{definition}}

We now present the main result of this Section, which consists of proving the existence of a \textit{two--scale $\varepsilon$-MFG equilibrium with randomized stopping} for the two-scale optimal stopping MFG with common noise.

\begin{theorem}[Construction  of a strong two-scale $\varepsilon$-MFG equilibrium with \textit{randomized stopping} from an effective MFG equilibrium]
\label{thm:Existence 2S epsilon MFG eq randomized}
Suppose either Poisson equation regime (Assumptions \ref{hyp:Dynamics_Poisson_X_Y} and \ref{hyp:Cost Poisson OS}), or Explicit OU regime (Assumptions \ref{hyp:Dynamics_Discret_X_Y}, \ref{hyp:Cost discretization OS}). 
Let $(k^{\star},m^\star,\mu^\star)$ be an effective MFG equilibrium with randomized stopping. 
For $0<\delta<1/2$, define $(k^{\star},m^{\delta,\star},\mu^{\delta,\star})$ as follows:
\begin{align*}
& m^{\delta , \star}_t(B) := \Ee \left[ \textbf{1}_{B}(X^\delta_t,Y^\delta_t)\, k^{\star}(\cdot,(t,T]) \bigg| \mathcal{F}^{\text{c}}_t \right], 
\quad B \in \mathcal{B}(\mathbb R \times \Rr), \quad t \in [0,T], \notag
\\
& \mu^{\delta , \star}(B) := \Ee\left[ \int_0^T \textbf{1}_{B}(\theta, X^\delta_\theta, Y^\delta_\theta)\, k^{\star}(\cdot,d\theta) \bigg| \mathcal{F}^{\text{c}}_T \right], 
\quad B \in \mathcal{B}([0,T]\times \mathbb R \times \Rr),
\end{align*}
where $(X^\delta, Y^\delta)$ is the strong solution of \eqref{eq:dynamics 2S}.

Then $(k^{\star}, m^{\delta,\star},\mu^{\delta,\star})$ is a two-scale $\varepsilon$-MFG equilibrium with randomized stopping, with
\[
\varepsilon = 
\begin{cases}
C\,\delta^{1/6}, & \text{Poisson equation regime}, \\[4pt]
C\,\delta^{1/3}, & \text{Explicit OU regime},
\end{cases}
\]
for some $C>0$ independent of $\delta$.
\end{theorem}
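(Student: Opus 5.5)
The plan is to mimic the proof of Theorem \ref{thm:Existence 2S epsilon MFG eq}, with randomized kernels in place of stopping times. Fix an arbitrary $k\in\mathcal K^{\bm W}$. The first step is to show that $k^\star$ is optimal for the effective problem also against $\mathcal{F}^{\bm W}$-adapted kernels:
\[
J^0(k,m^\star,\mu^\star)\le J^0(k^\star,m^\star,\mu^\star).
\]
This is the randomized analogue of Proposition \ref{pro:sup equality}. To prove it, I would represent a kernel $k$ through its distribution function $t\mapsto k(\cdot,[0,t])$. Then $J^0(k,m^\star,\mu^\star)=\int_0^1 J^0(\tau_u,m^\star,\mu^\star)\,du$ with $\tau_u=\inf\{t:k(\cdot,[0,t])\ge u\}$, where each $\tau_u\in\mathcal T(\mathbb F^{\bm W})$. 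This holds because $X^0$ depends only on $\mathbb F^{\text{x}}$ and the payoff is linear in $k$. Proposition \ref{pro:sup equality} gives $\sup_{\mathcal T(\mathbb F^{\bm W})}=\sup_{\mathcal T(\mathbb F^{\text x})}$. The same quantile representation applied to kernels in $\mathcal K^{\text x}$ shows that this common supremum is at most $J^0(k^\star,m^\star,\mu^\star)$. Hence
\[
J^\delta(k,m^{\delta,\star},\mu^{\delta,\star})-J^\delta(k^\star,m^{\delta,\star},\mu^{\delta,\star})\le \Delta_\delta(k)+\Delta_\delta(k^\star),
\]
where $\Delta_\delta(k)=|J^\delta(k,m^{\delta,\star},\mu^{\delta,\star})-J^0(k,m^\star,\mu^\star)|$.

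The second step bounds $\Delta_\delta(k)$ uniformly in $k$. The same quantile decomposition applied to $J^\delta$ gives $\Delta_\delta(k)\le\int_0^1|J^\delta(\tau_u,m^{\delta,\star},\mu^{\delta,\star})-J^0(\tau_u,m^\star,\mu^\star)|\,du$. Alternatively, the running terms can be handled directly: $k(\cdot,(t,T])\in[0,1]$ is $\mathcal F^{\bm W}_t$-adapted and plays the role of $\mathbf 1_{t<\tau}$. The flows are now built from $k^\star$ rather than $\tau^\star$. The estimates of Theorem \ref{thm:Existence 2S epsilon MFG eq} never used that the weights were indicators, only that they are $[0,1]$-valued and adapted. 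So I would redo Steps 1--3 there with $\mathbf 1_{(t,T]}(\tau^\star)$ replaced by $k^\star(\cdot,(t,T])$ and $\mathbf 1_{(t,T]}(\tau)$ replaced by $k(\cdot,(t,T])$. The Lipschitz and strong averaging terms follow from Proposition \ref{pro:Strong rate sup} and Lemma \ref{lemmaM}. The terminal term $\int g\,k(dt)$ is controlled by $C\,\mathbb E\sup_t|X^\delta_t-X^0_t|$ plus the Lipschitz bound on $\langle\hat g,\mu^{\delta,\star}-\mu^\star\rangle$.

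The third step, which I expect to be the main obstacle, is the discretization term ($A^3$ in the Poisson regime, $A^4$ in the OU regime). The truncation at $\theta^\Delta_\tau$ has to be replaced: I would split $\int_0^T k(\cdot,(t,T])A_t\,dt$ over the blocks $[k\Delta,(k+1)\Delta)$. On each block, $k(\cdot,(t,T])$ is replaced by $k(\cdot,(k\Delta,T])$, which is $\mathcal F^{\bm W}_{k\Delta}$-measurable. The error of this replacement is at most $C\,\mathbb E[k(\cdot,(k\Delta,(k+1)\Delta])]\,\Delta$, and summing over blocks gives at most $C\Delta$, since $k$ is a probability kernel. After this freezing, the weight and $F^3(k\Delta,X^0_{k\Delta})$ (respectively the frozen $\tilde F$) can be pulled out of the conditional expectation given $\mathcal F^{\bm W}_{k\Delta}$. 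One then applies Lemma \ref{lem:supL2_H}, or the explicit Gaussian OU computation leading to \eqref{varest_final}, with the weight $k^\star(\cdot,(t,T])$ inside the integral. This yields the bounds $C\Delta^{1/2}+C\sqrt\delta/\Delta$ and $C\Delta^{1/2}+C\delta/\Delta$ respectively. I need to check that Lemma \ref{lem:supL2_H} applies with an adapted $[0,1]$-valued weight in place of an indicator; I expect it does, since its proof uses only boundedness and adaptedness. Choosing $\Delta=\delta^{1/3}$, resp. $\Delta=\delta^{2/3}$, gives $\Delta_\delta(k)\le C\delta^{1/6}$, resp. $C\delta^{1/3}$, with $C$ independent of $k$. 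This concludes $\varepsilon$-optimality, and the consistency condition (ii) holds by construction of $m^{\delta,\star}$ and $\mu^{\delta,\star}$.
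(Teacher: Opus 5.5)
Your overall route is the paper's. The family $\tau_u=\inf\{t:k(\cdot,[0,t])\ge u\}$ with $J(k)=\int_0^1J(\tau_u)\,du$ is the same device as the paper's randomized time $\bar F^{-1}_\omega(U)$ built with an independent uniform $U$. For the middle term your version is cleaner than the paper's: each $\tau_u$ already lies in $\mathcal T(\mathbb F^{\bm W})$, so Proposition~\ref{pro:sup equality} applies verbatim. The bound $\sup_{\tau\in\mathcal T(\mathbb F^{\text x})}J^0(\tau,m^\star,\mu^\star)\le J^0(k^\star,m^\star,\mu^\star)$ needs no quantile argument at all, only that $\bdelta_\tau\in\mathcal K^{\text x}$ for $\tau\in\mathcal T(\mathbb F^{\text x})$. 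Your block-freezing of the deviator's weight is also correct, since $k(\cdot,(k\Delta,T])-k(\cdot,(t,T])=k(\cdot,(k\Delta,t])$ and these masses sum over blocks to at most one.

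The gap is your reason why the estimates of Theorem~\ref{thm:Existence 2S epsilon MFG eq} survive when $\mathbf 1_{(t,T]}(\tau^\star)$ is replaced by $k^\star(\cdot,(t,T])$, and, in your direct variant, $\mathbf 1_{(t,T]}(\tau)$ by $k(\cdot,(t,T])$.

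\begin{itemize}
\item It is false that those estimates use only that the weights are adapted and $[0,1]$-valued. Lemma~\ref{lem:supL2_H} contains no weight at all.
\item The paper brings the weight in through the stopping-time structure. In the $A^3$ term, $\int_a^b\Phi_t\mathbf 1_{\{t<\tau^\star\}}dt=H^\delta_{b\wedge\tau^\star}-H^\delta_{a\wedge\tau^\star}$ is bounded by $2\sup_t|H^\delta_t|$. Lemma~\ref{lemmaM} integrates by parts up to $\tau$.
\item For a general adapted weight the averaging bound fails. Take $\Phi_t=F^1(t,X^\delta_t,Y^\delta_t)-\bar F^1(t,X^\delta_t)$ with $F^1$ genuinely depending on $y$, and $w_t=\mathbf 1_{\{\Phi_t>0\}}$. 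Then $\mathbb E\int_0^Tw_t\Phi_t\,dt=\mathbb E\int_0^T\Phi_t^+\,dt$, which is of order one, not $\sqrt\delta$.
\end{itemize}

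What you actually need is monotonicity. The map $t\mapsto k^\star(\cdot,(t,T])$ is nonincreasing with total variation at most one. Integration by parts against the continuous $H^\delta$ then gives $\bigl|\int_a^b\Phi_t\,k^\star(\cdot,(t,T])\,dt\bigr|\le 2\sup_t|H^\delta_t|$. This is what feeds Lemma~\ref{lem:supL2_H} into the block bound. It also shows that $t\mapsto\mathbb E[\hat f_1(X^0_t)k^\star(\cdot,(t,T])]$ has bounded variation, as Lemma~\ref{lemmaM} requires.

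Equivalently, apply your quantile representation to $k^\star$ as well. This is what the paper does by realizing $k^\star$ as a genuine stopping time $\tau^\star$ of the enlarged filtration. $J^\delta$ is nonlinear in the flows, but each of these auxiliary estimates is linear in the weight. Averaging the stopping-time bounds over $u$ therefore gives them with constants independent of $k$ and $k^\star$.

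In the OU regime the issue does not arise for $k^\star$. There its weight enters only through the deterministic function $y'\mapsto\mathbb E[\hat F(X^0_t,y')k^\star(\cdot,(t,T])]$, which is Lipschitz in $y'$. With this correction your plan gives the stated rates.
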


\begin{proof} 
 Let $(k^\star, m^\star, \mu^\star)$ be an effective MFG equilibrium with randomized stopping, which implies that:
\begin{align*}
m^\star_t(B) & = \Ee\left[ \textbf{1}_{B}(X^0_t) k^{\star}(\cdot,(t,T])\right], \quad B\in \mathcal{B}(\mathbb R), \, t\in[0,T], \notag
\\
\mu^{\star}(B) & = \Ee\left[ \int_0^T \textbf{1}_{B}(\theta, X^0_\theta) k^{\star}(\cdot,d\theta)\right], \quad B\in \mathcal{B}([0,T]\times \mathbb R).
\end{align*}
It is clear that  $k^{\star}\in \mathcal{K}^{\bm{W}}$.
Let $k \in \mathcal{K}^{\bm{W}}$. We need to estimate the following difference:
\begin{align*}
& J^\delta(k,m^{\delta,\star}, \mu^{\delta,\star})-J^\delta(k^\star, m^{\delta,\star}, \mu^{\delta,\star}).
\end{align*}
To do so, we rewrite it as follows:
\begin{align}
\label{ineq}
J^\delta(k,m^{\delta,\star}, \mu^{\delta,\star})-J^\delta(k^\star, m^{\delta,\star}, \mu^{\delta,\star})&=J^\delta(k,m^{\delta,\star}, \mu^{\delta,\star})-J^{0}(k,m^{\star}, \mu^{\star})\nonumber \\
&+ J^{0}(k,m^{\star}, \mu^{\star})-J^{0}(k^\star,m^\star,\mu^\star)\nonumber\\
&+J^{0}(k^\star,m^\star,\mu^\star)-J^\delta(k^\star,m^{\delta,\star},\mu^{\delta,\star}).
\end{align}
\textcolor{black}{Fix $\omega \in \Omega$. For each $t\in[0,T]$, define
\[
F_t(\omega):=k(\omega,[0,t]).
\]
Since $k$ is a regular probability kernel, for every $\omega\in\Omega$ the map $t\mapsto F_t(\omega)$ is increasing, right-continuous, and satisfies $F_T(\omega)=1$. Let $\bar{F}^{-1}_\omega:[0,1]\to [0,T]$ denote the right-continuous inverse of $t \mapsto F_\omega(t)$. Let $(\bar{\Omega},\sigma(U),\mathbb{Q})$ be a probability space supporting a uniformly distributed random variable $U$ taking values in $[0,1]$ and independent of $\mathbb{F}^{\bm{W}}$. Let $\bar{\mathcal{F}} = \mathcal{F} \otimes \sigma(U)$, $\bar{\mathbb{F}}= \{\mathcal{F}^{\bm{W}}_t \otimes \sigma(U),\, 0 \leq t \leq T\}$, and $\bar{\mathbb{P}}(d\omega , du) = \mathbb{P}(d\omega) \mathbb{Q}(du)$. On the enlarged space $(\Omega \times \bar{\Omega}, \bar{\mathcal{F}}, \bar{\mathbb{F}}, \bar{\mathbb{P}})$, define
\[
\tau(\omega,u): = \bar{F}^{-1}_\omega (U(u)).
\]
Then, 
\[
\{ \tau \leq t\} = \{(\omega,u) \in \Omega \times \bar{\Omega}:\, U(u) \leq F_\omega(t)\} = \{(\omega,u) \in \Omega \times \bar{\Omega}:\, U(u) \leq k(\omega,[0,t])\} \in \bar{\mathcal{F}}_t,
\]
which implies that $\tau$ is a $\bar{\mathbb{F}}$-stopping time. Observe that we have
\begin{align}\label{equal}
\bar{\mathbb{P}}(\tau \leq t | \mathcal{F}_T) = k(\cdot,[0,t]) \quad \text{a.s.}
\end{align}
}
We begin by showing that the second term satisfies 
\begin{align}
\label{eq:Strong middle term bound kernels}    
J^{0}(k,m^{\star}, \mu^{\star})-J^{0}(k^\star,m^\star,\mu^\star) \leq 0.
\end{align}
\textcolor{black}{In view of \eqref{equality}, we have
\begin{align}\label{tau}
J^{0}(k,m^{\star}, \mu^{\star})=\mathcal{J}^{0}(\tau,m^{\star}, \mu^{\star}),
\end{align}
with $\mathcal{J}^0(\tau,m^{\star}, \mu^{\star}):=\mathbb{E}\left[\int_0^\tau \bar{f}(t,X_t^0,m_t^\star)dt+g(\tau, X_\tau^0,\mu^\star) \right].$
Similarly, one can construct a $\bar{\mathbb{F}}$-stopping time $\tau^\star$ such that
\begin{align}\label{taustar}
J^{0}(k^\star,m^{\star}, \mu^{\star})=\mathcal{J}^{0}(\tau^\star,m^{\star}, \mu^{\star})=\sup_{\tau \in \mathcal{T}(\bar{\mathbb{F}})} \mathcal{J}         ^0(\tau, m^\star,\mu^\star).
\end{align}
Note that, by similar arguments as those used in the proof of Proposition \ref{pro:sup equality}, we get
\begin{align}\label{equalJ}
\sup_{\tau \in \mathcal{T}(\mathbb{F}^{\text{x}})} \mathcal{J}^0(\tau, m^\star,\mu^\star) = \sup_{\tau \in \mathcal{T}(\bar{\mathbb{F}})} \mathcal{J}         ^0(\tau, m^\star,\mu^\star).
\end{align}
Since $(k^\star,m^{\star},\mu^{\star})$ is an effective  Nash equilibrium and by using \eqref{tau}, \eqref{taustar} and \eqref{equalJ}, we derive \eqref{eq:Strong middle term bound kernels}.
For the remaining terms in \eqref{ineq}, we assume Poisson equation regime (Assumptions \ref{hyp:Dynamics_Poisson_X_Y} and \ref{hyp:Cost Poisson OS}) and illustrate the computation with the first term on the right-hand side of \eqref{ineq}, noting that the same arguments apply for the third one.
From \eqref{equal}, we derive that $\Ee [ \mathbf{1}_A(X^\delta_t,Y^\delta_t) k(\cdot,[0,t])| \mathcal{F}^\text{c}_T] = \bar{\Ee} [ \mathbf{1}_A(X^\delta_t,Y^\delta_t) \mathbf{1}_{[0,t]}(\tau)| \mathcal{F}^\text{c}_T]$ a.s. for any $A \in \mathcal{B}(\mathbb R\times \Rr)$. By the same arguments, we obtain}
\[
\Ee [ \phi(X^\delta_t,Y^\delta_t) k(\cdot,(t,T])| \mathcal{F}^\text{c}_T] = \bar{\Ee} [ \phi(X^\delta_t,Y^\delta_t) \mathbf 1_{(t,T]}(\tau)| \mathcal{F}^\text{c}_T] \quad \text{a.s}
\]
and $\Ee[\phi(X^\delta_t,Y^\delta_t)k(\cdot,(t,T])]=\bar{\Ee}[\phi(X^\delta_t,Y^\delta_t)\mathbf 1_{(t,T]}(\tau)]$
for any $\phi \in C_b(\mathbb R\times \Rr)$. Using the analogous identities for $k^\star$, define
\begin{align*}
& \bar{m}^{\delta , \star}_t(B) := \bar{\Ee} \left[ \textbf{1}_{B}(X^\delta_t,Y^\delta_t)\, \mathbf 1_{(t,T]}(\tau^{\star}) \bigg| \mathcal{F}^{\text{c}}_t \right], 
\quad B \in \mathcal{B}(\mathbb R \times \Rr), \quad t \in [0,T], \notag
\\
& \bar{\mu}^{\delta , \star}(B) := \bar{\Ee} \left[ \textbf{1}_{B}(\tau^\star, X^\delta_{\tau^\star}, Y^\delta_{\tau^\star})\, \bigg| \mathcal{F}^{\text{c}}_T \right], 
\quad B \in \mathcal{B}([0,T]\times \mathbb R \times \Rr).
\end{align*}
Then, we obtain the a.s. equalities,
\begin{align}
\label{eq:Enlarge2}
& \langle \hat{f}_1, m^{\delta ,\star(\text{x})}_t\rangle = \langle \hat{f}_1, \bar{m}^{\delta ,\star(\text{x})}_t\rangle , \quad \langle \hat{f}_2, m^{\delta ,\star}_t\rangle = \langle \hat{f}_2, \bar{m}^{\delta ,\star}_t\rangle, \quad \langle \hat{g}, \mu^{\delta , \star(\text{t},\text{x})}\rangle = \langle \hat{g}, \bar{\mu}^{\delta , \star(\text{t},\text{x})}\rangle.
\end{align}
and the identities
\begin{align}
\label{eq:Enlarge3}
& \langle \hat{f}_1, m^{\star}_t\rangle \!=\! \bar{\Ee}[\hat{f}_1(X^0_t)\mathbf 1_{(t,T]}(\tau^\star)] , \quad \langle \bar{\hat{f}}_2, m^{\star}_t\rangle \!=\! \bar{\Ee}[\bar{\hat{f}}_2(X^0_t)\mathbf 1_{(t,T]}(\tau^\star)], \quad \langle \hat{g}, \mu^{\star}\rangle \!=\! \bar{\Ee}[\hat{g}(\tau^\star,X^0_{\tau^\star})].
\end{align}
By \eqref{eq:Enlarge2} and \eqref{eq:Enlarge3}, we can write
\begin{align}
\label{eq:Enlarge1}
& J^\delta(k,m^{\delta,\star}, \mu^{\delta,\star})-J^0(k, m^{\star}, \mu^{\star}) \notag
\\
& = \Ee\left[ \!\int_0^T \!\!\! (f(t,X_t^\delta,Y_t^\delta,m^{\delta,\star}_t) - \bar{f}(t,X_t^0,m^{\star}_t))k (\cdot,(t,T]) dt + \!\!\int_0^T \!\!\!( g(t,X_t^\delta,\mathbb E[\mu^{\delta,\star}|\mathcal F^c_t]) - g(t,X_t^0,\mu^{\star}))k(\cdot,dt)) \right] \notag
\\
& = \bar{\Ee} \left[ \int_0^T \!\!\!\left(\! F^1(t,X^\delta_t,Y^\delta_t) F^2(t,X^\delta_t, \langle \hat{f}_1, \bar{m}^{\delta ,\star(\text{x})}_t\rangle) \!-\! \bar{F}^1(t,X^0_t) F^2(t,X^0_t, \bar{\Ee}[\hat{f}_1(X^0_t)\mathbf 1_{(t,T]}(\tau^\star)]) \!\right)\!\mathbf 1_{(t,T]}(\tau)dt \right] \notag
\\
& \quad + \bar{\Ee} \left[ \int_0^T \!\!\!\left(\! F^3(t,X^\delta_t) \langle \hat{f}_2, \bar{m}^{\delta ,\star}_t\rangle - F^3(t,X^0_t) \bar{\Ee}[\bar{\hat{f}}_2(X^0_t)\mathbf 1_{(t,T]}(\tau^\star)] \!\right)\!\mathbf 1_{(t,T]}(\tau)dt \right] \notag
\\
& \quad + \bar{\Ee}\left[ G( \tau, X^\delta_\tau, \mathbb E[\langle \hat{g}, \bar{\mu}^{\delta , \star(\text{t},\text{x})}\rangle|\mathcal F^c_\tau]) - G( \tau, X^0_\tau , \bar{\Ee}[\hat{g}(\tau^\star,X^0_{\tau^\star})]) \right]
\end{align}
and proceed with the estimate as in the proof of Theorem \ref{thm:Existence 2S epsilon MFG eq} (see \eqref{eq:Strong first term main}), obtaining
\begin{align}
\label{eq:Strong First term bound kernels}
& \left| J^\delta(k, m^{\delta, \star}, \mu^{\delta, \star}) - J^0(k, m^{\star}, \mu^{\star})\right|  \leq C \delta^{\tfrac{1}{6}},
\end{align}
where $C>0$ is independent of $\delta$. Similarly, for the third term on the right-hand side of \eqref{ineq}, 
\begin{align}
\label{eq:Strong Third term bound kernels}
& \left| J^0(k^\star, m^{\star}, \mu^{\star}) - J^\delta(k^\star, m^{\delta,\star}, \mu^{\delta,\star})\right| \leq C \delta^{\tfrac{1}{6}}.
\end{align}
By \eqref{ineq}, \eqref{eq:Strong middle term bound kernels}, \eqref{eq:Strong First term bound kernels}, and \eqref{eq:Strong Third term bound kernels}, we get
\begin{align*}
& J^\delta(k, m^{\delta,\star}, \mu^{\delta,\star}) - J^{\delta}(k^\star, m^{\delta,\star}, \mu^{\delta,\star})\leq C \delta^{\tfrac{1}{6}}.
\end{align*}
Because $k \in \mathcal{K}^{\bm{W}}$ is arbitrary, we conclude that $(k^\star,m^{\delta , \star},\mu^{\delta , \star})$ is a two--scale $\varepsilon$--MFG equilibrium with randomized stopping for $\varepsilon = C \delta^{1/6}$, where $C$ is independent of $\delta$.

The same enlargement procedure yields the result when {\bf(2)} Assumptions \ref{hyp:Dynamics_Discret_X_Y} and \ref{hyp:Cost discretization OS} hold.
\end{proof}

\section{Existence of an \textit{effective} MFG equilibrium with \textit{randomized stopping}} \label{Sec:OS existence effective}
In this section, we consider the general setting in which the existence of an equilibrium for the \textit{effective mean-field game (MFG) problem} can be established in the class of randomized strategies. We have seen in the preceding section that from such an equilibrium one can construct a $\varepsilon$- \textit{MFG equilibrium with randomized stopping for the two-scale MFG problem}.

Now, we proceed with the proof of the existence of an \textit{effective MFG equilibrium with randomized stopping}. This result is based on the linear programming approach for mean-field games developed in \cite{1Bouveret} and \cite{4LPApproach} and a new result which consists in a probabilistic representation of the admissible occupation measures which appear in the LP formulation in terms of \textit{randomized stopping times}.

Under explicit OU regime (Assumption \ref{hyp:Dynamics_Discret_X_Y}), this requires an additional condition, needed to invoke Theorem 2.24 from \cite{4LPApproach}.

\begin{assumption}
\label{hyp:EJP unbounded equality}
The coefficients $\bar b$ and $\sigma$ are bounded, with $\sigma$ bounded away from zero.
\end{assumption}

\noindent The main result of this section can be formulated as follows.

\begin{theorem}\label{thm:exist_effective}
Suppose that either Poisson equation regime (Assumptions \ref{hyp:Dynamics_Poisson_X_Y} and \ref{hyp:Cost Poisson OS}) holds or Explicit OU regime (Assumptions \ref{hyp:Dynamics_Discret_X_Y} and \ref{hyp:Cost discretization OS}) with additional Assumption  \ref{hyp:EJP unbounded equality} holds. Then there exists an \textit{effective} MFG equilibrium with \textit{randomized stopping}.
\end{theorem}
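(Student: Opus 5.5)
The plan is to obtain the equilibrium from the linear programming (LP) formulation of mean-field games of optimal stopping \cite{1Bouveret,4LPApproach}, applied to the effective dynamics \eqref{eq:dynamics effective}. We then convert the resulting measure-valued equilibrium into a randomized stopping kernel $k^\star\in\mathcal K^{\text{x}}$.

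\textbf{Step 1: the LP problem.} For $(\bar m,\bar\mu)\in\mathcal V(\mathbb R)\times\mathcal P([0,T]\times\mathbb R)$, let $\mathcal A$ be the set of pairs $(m,\mu)$ that satisfy, for all $u\in C^{1,2}_b([0,T]\times\mathbb R)$,
\[
\int_0^T\!\!\int_{\mathbb R}(\partial_t u+\bar{\mathcal L}u)(t,x)\,m_t(dx)\,dt+\int_{\mathbb R}u(0,x)\,m^X_0(dx)=\int_{[0,T]\times\mathbb R}u(t,x)\,\mu(dt,dx).
\]
We maximize $\int\!\!\int \bar f(t,x,\bar m_t)\,m_t(dx)\,dt+\int g(t,x,\bar\mu)\,\mu(dt,dx)$ over $\mathcal A$. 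An LP equilibrium is a fixed point of the best-response correspondence $(\bar m,\bar\mu)\mapsto\arg\max$. To get existence, we invoke the fixed point theorem of \cite{1Bouveret,4LPApproach} (Kakutani--Fan--Glicksberg). This requires checking that $\mathcal A$ is convex and compact in a suitable weak topology, and that the payoff is jointly continuous in $(m,\mu,\bar m,\bar\mu)$.

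Compactness follows from moment bounds obtained by testing the constraint with $u=|x|^2$ (truncated), together with the linear growth of $\bar b$ from \eqref{eq:Lipschitz eff os} and $m^X_0\in\mathcal P_q$. Continuity follows from the structure of $\bar f$ in \eqref{payoff1} or \eqref{payoff2} and of $g$. Both are built from bounded continuous functions composed with integrals against $\bar m$ and $\bar\mu$. Here $\bar F^1$, $\bar{\hat f}_2$ and $\hat F$ are continuous and bounded or Lipschitz, by averaging against $\bar\mu$ or $\bar\mu'$ and using Proposition \ref{pro:E invariant m estimate}.

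In the explicit OU regime, Assumption \ref{hyp:EJP unbounded equality} is exactly what is needed to apply Theorem 2.24 of \cite{4LPApproach}, which identifies the LP value with the value of the stopping problem. In the Poisson regime, $\bar b$ and $\sigma$ are bounded by assumption and $\sigma$ is uniformly elliptic, so the same theorem applies directly.

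\textbf{Step 2: probabilistic representation.} This is the new ingredient and the main obstacle. We must show that every $(m,\mu)\in\mathcal A$, in particular the fixed point $(m^\star,\mu^\star)$, can be written as
\[
m_t(B)=\mathbb E[\mathbf 1_B(X^0_t)k(\cdot,(t,T])],\qquad \mu(B)=\mathbb E\Big[\int_0^T\mathbf 1_B(\theta,X^0_\theta)\,k(\cdot,d\theta)\Big]
\]
for some $k\in\mathcal K^{\text{x}}$.

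I would first use the representation results of \cite{4LPApproach,5FP}. These show that $(m,\mu)$ is the occupation/stopping pair of a stopping time $\tau$ in a possibly enlarged filtration $\mathbb G\supseteq\mathbb F^{\text{x}}$ on some probability space, relative to a weak solution of \eqref{eq:dynamics effective}. By strong uniqueness and Yamada--Watanabe, the driving pair $(X_0,W^{\text{x}})$ can be transported to our space. I would then define the kernel by optional projection:
\[
k(\cdot,[0,t]):=\text{the càdlàg version of }\mathbb P(\tau\le t\mid\mathcal F^{\text{x}}_T),
\]
and use immersion (the (H)-hypothesis, which holds because $W^{\text{x}}$ remains a $\mathbb G$-Brownian motion) to get $\mathbb P(\tau\le t\mid\mathcal F^{\text{x}}_T)=\mathbb P(\tau\le t\mid\mathcal F^{\text{x}}_t)$. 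This gives the required adaptedness $k(\cdot,B)\in\mathcal F^{\text{x}}_t$ for $B\subseteq[0,t]$. Since $X^0$ is $\mathbb F^{\text{x}}$-adapted, conditioning on $\mathcal F^{\text{x}}_T$ then yields the two identities above. I expect verifying immersion, and that a regular kernel version exists, to be the delicate point.

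\textbf{Step 3: optimality.} Conversely, every $k'\in\mathcal K^{\text{x}}$ induces a pair in $\mathcal A$ by Itô's formula, applied through the stopping time $\tau'=\bar F^{-1}(U)$ as in the proof of Theorem \ref{thm:Existence 2S epsilon MFG eq randomized}. Its LP payoff equals $J^0(k',m^\star,\mu^\star)$. Hence LP optimality of $(m^\star,\mu^\star)$ gives $J^0(k^\star,m^\star,\mu^\star)\ge J^0(k',m^\star,\mu^\star)$, and the consistency conditions in Definition \ref{def:exist_effective} hold by Step 2. Therefore $(k^\star,m^\star,\mu^\star)$ is an effective MFG equilibrium with randomized stopping.
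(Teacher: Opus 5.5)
Your overall architecture matches the paper's. You take an LP equilibrium from Theorem 3.11 of \cite{4LPApproach}, represent every pair in the LP constraint set by a kernel in $\mathcal K^{\text{x}}$, and read off optimality from LP optimality once every $k'\in\mathcal K^{\text{x}}$ is known to generate an admissible pair. Your use of $\tau'=\bar F^{-1}(U)$ and It\^o's formula for that last direction is fine, and slightly more direct than the paper's detour through a path-space kernel.

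The gap is Step~2. You correctly identify it as the crux, but then settle it by citation: you assert that \cite{4LPApproach,5FP} already show that every pair in the constraint set is the occupation/stopping pair of a stopping time $\tau$ of a filtration $\mathbb G$ in which the driving noise is a Brownian motion. The paper does not get this from those references; it presents the representation of the LP set by randomized stopping as its new ingredient. The only input it takes from \cite{4LPApproach} for this step is Theorem 2.24. That theorem gives $V^{\mathrm{LP}}(F,G)=V^{\mathrm S}(F,G)$, the equality of the LP value and the strict-stopping value for the single-agent problem with bounded continuous running and terminal payoffs $F,G$. You quote that theorem in Step~1, where your argument never uses it, and you do not use it in Step~2, where it is needed. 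As written, Step~2 assumes the statement that has to be proved.

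Here is how the paper closes it. On $\Omega_X\times[0,T]$ it takes the set $\mathcal P_{\text{R}}$ of laws whose $X$-marginal is $\nu$ and under which $\mathbb F^{\bar X}$ is immersed in $\bar{\mathbb F}$. By Proposition 1.10 of \cite{carmona2018probabilistic}, these are exactly the laws $k(\mathrm x,d\theta)\nu(d\mathrm x)$ with $k\in\mathcal K_c^{\mathbf X}$, so immersion is built into the class rather than derived.
\begin{itemize}
\item It shows $\mathcal P_{\text{R}}$ is convex and weakly compact, and that $\bar{\mathbb P}\mapsto(m^{\bar{\mathbb P}},\mu^{\bar{\mathbb P}})$ is continuous.
\item Hence $\mathcal R_1$ is convex and compact, with $\mathcal R_0\subset\mathcal R_1\subset\mathcal R$; the last inclusion follows by optional sampling.
\item If some $(\tilde m,\tilde\mu)\in\mathcal R$ lay outside $\overline{\text{conv}}(\mathcal R_0)$, Hahn--Banach separation would give bounded continuous $F,G$ with $V^{\mathrm S}(F,G)<V^{\mathrm{LP}}(F,G)$. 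This contradicts Theorem 2.24, and it is exactly where Assumption \ref{hyp:EJP unbounded equality} enters.
\item Therefore $\mathcal R=\mathcal R_1$, and the kernel is transferred to the original space using $\mathbb F^{\text{x}}=\mathbb F^{X^0}$.
\end{itemize}

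If you want to keep your route, you must invoke a precise stopped-martingale-problem representation (of Kurtz--Stockbridge or Cho--Stockbridge type) and check its hypotheses on the unbounded state space $\mathbb R$. Such results typically give the martingale property only up to $\tau$. You would therefore need to continue the noise after $\tau$ with an independent Brownian motion, so that you have a $\mathbb G$-Brownian motion on all of $[0,T]$ before applying the (H)-hypothesis. With that in place, your projection $k(\cdot,[0,t])=\mathbb P(\tau\le t\mid\mathcal F^{\text{x}}_T)=\mathbb P(\tau\le t\mid\mathcal F^{\text{x}}_t)$ is correct and yields the required kernel.
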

The proof of Theorem \ref{thm:exist_effective} is based on the results which will be presented below, and therefore it is postponed at the end of the Section.

\paragraph{Linear programming MFG equilibrium for the effective optimal stopping MFG problem.} For a given filtered probability space $(\Omega, \mathcal{F}, \mathbb F, \mathbb P)$, $\tau$ a $\mathbb F$-stopping time such that $\tau\leq T$ $\mathbb P$-a.s., $W$ a $\mathbb F$-Brownian motion and $(X_t)$ a $\mathbb F$-adapted process such that \eqref{eq:dynamics effective} holds, one can define the measures 
$$\mu^{\tau}:=\mathbb P \circ \left(\tau, X_{\tau}\right)^{-1},$$
and
$$m^{\tau}_t(B):= \mathbb E^{\mathbb P}\left[ \textbf{1}_B(X_t) \mathbf 1_{(t,T]}(\tau)\right],  \quad B\in \mathcal{B}(\mathbb R), \quad t\in [0, T].$$
By applying Itô's formula, one can show that $(m^{\tau}, \mu^\tau)\in \mathcal{R}$,
where $\mathcal{R}$ represents the set of pairs $(m,\mu)$, where $(m_t)_{0\leq t\leq T}$ is a flow of sub-probability measures on $\mathbb R$ and $\mu\in \mathcal P([0,T]\times \mathbb R)$, which satisfy the constraint 
\begin{align}
\label{eq:cons motivation measures}
\int_{[0,T] \times \mathbb R} \varphi(t,x) \mu(dt,dx) = \int_{\mathbb R}\varphi(0,x)m_0^X(dx) + \int_{[0,T]\times \mathbb R}\left((\partial_t + \bar{\mathcal{L}})\varphi\right)(t,x)m_t(dx)dt,
\end{align}
for all test functions $\varphi \in C^{1,2}_b([0,T]\times \mathbb R;\mathbb{R})$.

We now introduce the notion of linear programming MFG equilibrium for the effective MFG problem.
\begin{definition}[Linear programming MFG equilibrium] 
\label{def:LP Nash effective}
We say that $(m^{\star},\mu^{\star}) \in \mathcal{R}$ is \textit{a linear programming MFG equilibrium} if for all $(m,\mu) \in \mathcal{R}$:
\begin{align}
&\int_{0}^T\!\!\!\int_{\mathbb R} \bar{f}(t,x,m^\star_t)m^\star_t(dx)dt
+\int_{[0,T]\times\mathbb R} g(t,x,\mu^\star)\mu^\star(dt,dx) \notag\\
&\quad \geq
\int_{0}^T\!\!\!\int_{\mathbb R} \bar{f}(t,x,m^\star_t)m_t(dx)dt 
+\int_{[0,T]\times\mathbb R} g(t,x,\mu^\star)\mu(dt,dx).
\end{align}
\end{definition}

We recall the following existence result of a linear programming (LP) MFG equilibrium. Either {\bf(1)} Assumptions \ref{hyp:Dynamics_Poisson_X_Y} and \ref{hyp:Cost Poisson OS}, or {\bf(2)} Assumptions \ref{hyp:Dynamics_Discret_X_Y} and \ref{hyp:Cost discretization OS} ensure that Assumption 3.1 in \cite{4LPApproach} holds for the effective problem. Under this assumption, the authors establish the following result (Theorem 3.11).

\begin{theorem}[Existence of a linear programming MFG equilibrium for optimal stopping]\label{existLP} Suppose that either Poisson equation regime (Assumptions \ref{hyp:Dynamics_Poisson_X_Y} and \ref{hyp:Cost Poisson OS}) holds or Explicit OU regime (Assumptions \ref{hyp:Dynamics_Discret_X_Y} and \ref{hyp:Cost discretization OS}) with additional Assumption  \ref{hyp:EJP unbounded equality} holds. Then there exists a LP MFG equilibrium for the effective MFG problem.
\end{theorem}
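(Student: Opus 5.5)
The plan is to reduce Theorem~\ref{existLP} to the abstract existence result of \cite{4LPApproach} (their Theorem 3.11). This amounts to checking that, in either regime, the effective data $(\bar b,\sigma,\bar f,g,m_0^X)$ satisfy Assumption 3.1 there. The proof is therefore a verification of hypotheses on the averaged coefficients, rather than a new fixed-point argument.

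\textbf{Step 1: the effective dynamics.} I will check the regularity and growth of $\bar b$ and $\sigma$.
\begin{itemize}
\item The bound \eqref{eq:Lipschitz eff os} gives Lipschitz continuity of $\bar b$ in $x$. Proposition~\ref{pro:E invariant m estimate} gives linear growth $|\bar b(t,x)|\le C(1+|x|)$.
\item Continuity in $t$ follows by dominated convergence, since $b(\cdot,x,y)$ is continuous (H\"older under Assumption~\ref{hyp:Dynamics_Poisson_X_Y}, Lipschitz under Assumption~\ref{hyp:Dynamics_Discret_X_Y}) and dominated by $C(1+|x|+|y|)$, which is $\bar\mu$-integrable.
\item $\sigma$ is bounded and Lipschitz, and uniformly elliptic in the Poisson regime.
\item In the OU regime, Assumption~\ref{hyp:EJP unbounded equality} supplies the boundedness and nondegeneracy required by Theorem 2.24 of \cite{4LPApproach}. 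That theorem identifies $\mathcal R$ with the closure of the stopping-time occupation measures and is used inside their Theorem 3.11.
\item The moment condition $m_0^X\in\mathcal P_q$ with $q\ge 2$ gives the required integrability of the initial law.
\end{itemize}

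\textbf{Step 2: the rewards.} I will verify continuity of $(x,m)\mapsto \bar f(t,x,m)$ and $(t,x,\mu)\mapsto g(t,x,\mu)$ with respect to the weak topology on sub-probability flows and on $\mathcal P([0,T]\times\mathbb R)$, together with boundedness.

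\emph{Poisson regime, formula \eqref{payoff1}.} Here $\bar F^1$ is bounded and Lipschitz, since $F^1$ is bounded and Lipschitz in $y$. The maps $m\mapsto\langle \hat f_1,m\rangle$ and $m\mapsto\langle\bar{\hat f}_2,m\rangle$ are weakly continuous because $\hat f_1$ and $\bar{\hat f}_2$ are bounded and continuous. Composing with the Lipschitz maps $F^2$ and $F^3$ preserves this continuity. The same argument handles $g=G(t,x,\langle\hat g,\mu\rangle)$.

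\emph{OU regime, formula \eqref{payoff2}.} $\hat F$ is the limit of a Gaussian conditional expectation, so it is explicitly a Gaussian convolution of $\hat f$ in $y$. Hence it is Lipschitz and of linear growth in $(x,y)$. Integrating against the Gaussian $\bar\mu'$ and using the Lipschitz property of $F$ yields boundedness and continuity of $\bar f$.

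The point requiring care is that Assumption 3.1 of \cite{4LPApproach} needs continuity of the interaction term in the topology used for flows, namely weak convergence of $m_t(dx)\,dt$ measures. I would handle this by writing $\int_0^T\!\int \bar f(t,x,m_t)\,m'_t(dx)\,dt$ and using the bounded-continuous structure $\langle\hat f_1,m_t\rangle$ together with dominated convergence in $t$.

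\textbf{Step 3: conclude.} With Assumption 3.1 verified, Theorem 3.11 of \cite{4LPApproach} yields $(m^\star,\mu^\star)\in\mathcal R$ satisfying Definition~\ref{def:LP Nash effective}.

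\textbf{Main obstacle.} I expect the hardest step to be matching the exact hypotheses of \cite{4LPApproach} when $\bar b$ is unbounded with linear growth. In particular, one must ensure that the set $\mathcal R$ is tight and closed, which requires moment bounds from the constraint \eqref{eq:cons motivation measures} applied to test functions approximating $x^2$. I would first derive a uniform second-moment estimate $\int x^2\mu(dt,dx)+\int_0^T\!\int x^2 m_t(dx)\,dt\le C$ for all $(m,\mu)\in\mathcal R$, using Gronwall and the linear growth of $\bar b$. In the OU regime I would instead bypass this by invoking Assumption~\ref{hyp:EJP unbounded equality}.
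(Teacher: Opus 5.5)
Your proposal follows the paper's route. The paper's entire argument is to assert that, in either regime, the effective data satisfy Assumption 3.1 of \cite{4LPApproach} and then invoke Theorem 3.11 there, so your Steps 1--3 are a more detailed version of that same citation.

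One caution on your extra verification. Weak convergence of $m^n_t(dx)\,dt$ gives no pointwise-in-$t$ convergence of $\langle \hat f_1, m^n_t\rangle$, so dominated convergence alone cannot handle the nonlinear dependence through $F^2$. What does work is that $t\mapsto\langle \hat f_1,m_t\rangle$ has bounded variation uniformly over $\mathcal R$; this follows from the constraint \eqref{eq:cons motivation measures} with $\hat f_1\in C^2_b$ and bounded $\bar b$, the same device as in Lemma~\ref{lemmaM}. Combined with Helly's selection theorem, this yields the strong convergence in $t$ that you need.
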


\paragraph{MFG equilibrium for the effective MFG problem with \textit{randomized stopping}}
\label{sec:canon representation effective}
Introduce the space $\Omega_X:=C([0,T];\mathbb{R})$ with the canonical process $X_t(\omega):=\omega_t$, and consider the enlarged canonical space $\bar{\Omega}_X:=C([0,T];\mathbb{R}) \times [0,T]$. Define the canonical elements
$\bar{X}_t(\bar{\omega}):=\omega_t$ and 
 $\bar{\tau}(\bar{\omega}):=\theta,$
with $\bar{\omega}:=(\omega,\theta)$.
On $(\Omega_X, \mathcal{B}(\Omega_X)$ we define, for $0 \leq t \leq T$, the $\sigma$-algebras $\mathcal{F}_t^{X}:=\sigma\{X_s,\,\ s \leq t\}$ and on $\bar{\Omega}_X$ we introduce $\mathcal{F}^{\bar{X}}_t=\sigma\{\bar{X}_s,\,\, s \leq t\}$, $\mathcal{F}^{\bar{\tau}}_t=\sigma\{\{\bar{\tau} \leq s\},\,\, s \leq t\}$ and $\bar{\mathcal{F}}_t=\mathcal{F}^{\bar{X}}_t \vee \mathcal{F}^{\bar{\tau}}_t$. We introduce the associated canonical filtrations given by $\bar{\mathbb{F}} = (\bar{\mathcal{F}}_t)_{0\leq t \leq T}$, $\mathbb{F}^X = (\mathcal{F}^X_t)_{0\leq t \leq T}$, $\mathbb{F}^{\bar{X}} = (\mathcal{F}^{\bar{X}}_t)_{0\leq t \leq T}$, and $\mathbb{F}^{\bar{\tau}} = (\mathcal{F}^{\bar{\tau}}_t)_{0\leq t \leq T}$. It is clear that $\bar{\tau}$ is a $\bar{\mathbb{F}}$-stopping time.

For a probability measure $\bar{\mathbb P}$ on $ (\bar\Omega_X,\mathcal{B}(\bar\Omega_X))$, we define by $\mathbb F^{\bar X, \bar{\mathbb P}}$ the completion of the filtration $\mathbb F^{\bar X}$ with the null sets of $\bar{\mathbb P}$.

Let $\nu$ be the law of the strong solution $X^0$ of \eqref{eq:dynamics effective} on $(\Omega_X,\mathcal{B}(\Omega_X))$. We similarly define $\mathbb F^{X,\nu}$ as the completion of $\mathbb F^X$ with the null sets of $\nu$. We introduce the following set of probability measures:
\begin{definition}[Law of process with a randomized stopping time]
Let $\mathcal{P}_{\text{R}}$ be the set of probability measures $\bar{\mathbb{P}}$ on $(\bar{\Omega}_X, \mathcal{B}(\bar{\Omega}_X))$ such that
\begin{itemize}
\item[(i)] The marginal of $\bar{\mathbb{P}}$ on $\Omega_X$ is $\nu$ 
\item[(ii)] Under $\bar{\mathbb{P}}$, the filtration $\mathbb{F}^{\bar{X}}$ is immersed in $\bar{\mathbb{F}}$.
\end{itemize}
\end{definition}
 By the disintegration theorem (\cite{carmona2018probabilistic}, Theorem 1.1), we can write $\bbbP$ as
\begin{align}
\label{eq:bP factor e 0}
\bbbP(d\mathrm{x}, d\theta) = k(\rmx, d\theta) \nu(d\mathrm{x}),
\end{align}
where $k: \Omega_X \times \mathcal{B}([0,T])  \to [0,1]$ is a regular kernel, i.e. $k(\rmx, \cdot)$ is a probability on $\mathcal{B}([0,T])$ for all $\mathrm{x}\in\Omega_X$ and $k(\cdot, B)$ is $\mathcal{B}(\Omega_X)$--measurable for all $B \in \mathcal{B}([0,T])$.

\noindent From  Proposition 1.10 in \cite{carmona2018probabilistic}, we deduce the following representation of the set $\mathcal{P}_{\text{R}}$:
\begin{align}\label{rep}
\mathcal{P}_{\text{R}} = \left\{
\bbbP \in \mathcal{P}(\bar{\Omega}_X):\, \bbbP(d\mathrm{x}, d\theta) = {k}(\mathrm{x}, d\theta) \nu(d\mathrm{x}) \text{ for some } {k} \in \mathcal{K}_c^{\mathbf{X}} \right\},
\end{align}
where
\begin{align} \label{def:A01def} 
& \mathcal{K}_c^{\mathbf{X}} := \left\{
\begin{aligned}
& {k}: \Omega_X \times \mathcal{B}([0,T]) \to [0,1] \, \text{ is a regular probability kernel, and }\\
&\text{for all }0\leq t \leq T, \text{ and } B\in \mathcal{B}([0,t]),\, {k}(\cdot,B) \text{ is } \mathcal{F}^{X,\nu}_t \text{--measurable}
\end{aligned}
\right\}.
\end{align}
For $\bbbP \in \mathcal{P}(\bar{\Omega}_X)$, define the occupation measures
\begin{align}
\label{eq:Occupation m A01}
& m_t^{\bbbP}(B) := \Ee^{\bbbP}[\textbf{1}_{B}(\bar{{X}}_t) \mathbf 1_{(t,T]}({\bar{\tau}})], \quad B\in \mathcal{B}(\mathbb R), \, t \in [0,T], \notag
\\
& \mu^{\bbbP}(C) := \Ee^{\bbbP}[\textbf{1}_{C}({\bar{\tau}}, \bar{{X}}_{{\bar{\tau}}}) ], \quad C \in \mathcal{B}([0,T]) \otimes \mathcal{B}(\mathbb R),
\end{align}
and introduce the set 
\begin{align*}
\mathcal{R}_1 := \left\{ (m^{\bbbP},\mu^{\bbbP}):\, \bbbP \in \mathcal{P}_{\text{R}}\right\}.
\end{align*}

We now define the following set of probability measures.
\begin{definition}[Law of process with a strict stopping time] Let $\mathcal{P}_{\text{S}}$ be the set of probability measures $\bar{\mathbb{P}}$ on $(\bar{\Omega}_X, \mathcal{B}(\bar{\Omega}_X))$ such that:
\begin{itemize}
\item[(i)] The marginal of $\bar{\mathbb{P}}$ on $\Omega_X$ is $\nu$;
\item[(ii)] ${\bar{\tau}}$ is a stopping time with respect to $\mathbb{F}^{\bar{X}, \bar{\mathbb{P}}}$.
\end{itemize}
\end{definition}
We introduce the following definition.
\begin{definition} The set $\mathcal{T}$ represents the set of measurable functions $\tau: \Omega_X \mapsto [0,T]$ such that $\tau$ is a $\mathbb{F}^{X,\nu}$-stopping time. 
\end{definition}
We have the following characterization of the set $\mathcal{P}_{\text{S}}$.
\begin{theorem}[Characterization of the set $\mathcal{P}_{\text{S}}$]
We have the following representation:
\begin{align}
\label{def:A00}
& \mathcal{P}_{\text{S}} = \left\{\bdelta_{\tau(\mathrm{x})}(d\theta) \nu(d\mathrm{x})  \text{ for some } \tau \in \mathcal{T}\right\}.
\end{align}
\end{theorem}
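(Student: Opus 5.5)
We prove the two inclusions separately, working with the disintegration \eqref{eq:bP factor e 0} of a measure $\bbbP$ on $\bar\Omega_X=\Omega_X\times[0,T]$ with $\Omega_X$-marginal $\nu$.

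\emph{Inclusion ``$\supseteq$''.} Let $\tau\in\mathcal T$ and set $\bbbP(d\rmx,d\theta):=\bdelta_{\tau(\rmx)}(d\theta)\,\nu(d\rmx)$. This is a probability measure on $\bar\Omega_X$, since $\tau$ is $\mathcal B(\Omega_X)^\nu$-measurable and hence agrees $\nu$-a.s. with a Borel map. Property (i) holds by construction. Under $\bbbP$ we have $\bar\tau=\tau(\bar X)$ $\bbbP$-a.s., because the set $\{(\rmx,\theta):\theta\neq\tau(\rmx)\}$ has $\bbbP$-measure zero. Fix $t$. The event $\{\bar\tau\le t\}$ coincides, up to a $\bbbP$-null set, with $\{\tau(\bar X)\le t\}$. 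This latter set is the preimage under the projection $\bar\omega\mapsto\omega$ of a set in $\mathcal F^{X,\nu}_t$. Such a preimage is the preimage of a set in $\mathcal F^X_t$ modulo a $\nu$-null set, and a $\nu$-null set pulls back to a $\bbbP$-null set. Therefore $\{\bar\tau\le t\}\in\mathcal F^{\bar X,\bbbP}_t$, which gives (ii).

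\emph{Inclusion ``$\subseteq$''.} Let $\bbbP\in\mathcal P_{\rm S}$. By (i) we may disintegrate $\bbbP=k(\rmx,d\theta)\nu(d\rmx)$. For each rational $t$, property (ii) gives $\{\bar\tau\le t\}=\{\bar X\in A_t\}$ up to a $\bbbP$-null set, for some $A_t\in\mathcal F^X_t$. By completion we may take $A_t$ Borel and $\mathcal F^X_t$-measurable. Disintegrating this equality yields
\[
k(\rmx,[0,t])=\mathbf 1_{A_t}(\rmx)\quad \nu\text{-a.s.}
\]
Discarding a countable union of $\nu$-null sets, we obtain for $\nu$-a.e. $\rmx$ that $k(\rmx,[0,t])\in\{0,1\}$ for all rational $t$. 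Since $t\mapsto k(\rmx,[0,t])$ is right-continuous and nondecreasing with $k(\rmx,[0,T])=1$, the map $t\mapsto k(\rmx,[0,t])$ is $\{0,1\}$-valued for all $t$. Hence $k(\rmx,\cdot)=\bdelta_{\tau(\rmx)}$, where
\[
\tau(\rmx):=\inf\{t\in\mathbb Q\cap[0,T]: k(\rmx,[0,t])=1\}\wedge T .
\]
It remains to check that $\tau\in\mathcal T$. Measurability of $\tau$ follows from the fact that it is an infimum over countably many measurable indicators. For the stopping time property, $\{\tau\le t\}=\bigcap_{s\in\mathbb Q,\,s>t}\{k(\cdot,[0,s])=1\}$ up to $\nu$-null sets. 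Each of these sets lies in $\mathcal F^{X,\nu}_s$, so the intersection lies in $\mathcal F^{X,\nu}_{t+}$. By the usual conditions we need $\mathcal F^{X,\nu}_{t+}=\mathcal F^{X,\nu}_t$. Alternatively, right-continuity already gives $\{\tau\le t\}=\{k(\cdot,[0,t])=1\}$ directly, at least for rational $t$.

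\emph{Main obstacle.} The delicate step is handling null sets and non-rational $t$ in the last paragraph. For irrational $t$ I would apply (ii) directly at $t$: it gives $k(\rmx,[0,t])=\mathbf 1_{A_t}$ $\nu$-a.s., so $\{\tau\le t\}=\{k(\cdot,[0,t])=1\}=A_t$ modulo a $\nu$-null set, which lies in $\mathcal F^{X,\nu}_t$. This avoids any appeal to right-continuity of the filtration. One also has to verify that $\nu$-null sets of $\Omega_X$ correspond to $\bbbP$-null cylinder sets, which follows immediately from (i).
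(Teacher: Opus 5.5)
Your proof is correct. The ``$\supseteq$'' inclusion matches the paper's argument: define $\bar{\mathbb P}=\bdelta_{\tau(\rmx)}(d\theta)\nu(d\rmx)$, note that $\bar\tau=\tau(\bar X)$ a.s., and split $\{\tau\le t\}$ into an $\mathcal F^X_t$-set plus a $\nu$-null set whose cylinder is $\bar{\mathbb P}$-null.

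For ``$\subseteq$'' you take a genuinely different route.

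\begin{itemize}
\item \textbf{The paper's route.} It first uses only that $\bar\tau$ is $\mathcal F^{\bar X,\bar{\mathbb P}}_T$-measurable. A Doob--Dynkin factorization then gives $\bar\tau=\tau(\bar X)$ a.s. for some Borel $\tau$, and the Dirac disintegration $\nu(d\rmx)\bdelta_{\tau(\rmx)}(d\theta)$ follows immediately. It then transfers the stopping property from $\tau(\bar X)$ to $\tau$ by asserting, as ``easily verified'', that preimages of $\mathcal F^{\bar X,\bar{\mathbb P}}_t$-sets under $\rmx\mapsto(\rmx,\tau(\rmx))$ lie in $\mathcal F^{X,\nu}_t$.
\item \textbf{Your route.} You disintegrate $\bar{\mathbb P}=k(\rmx,d\theta)\nu(d\rmx)$ first. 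You then use property (ii) at each $t$ to get $k(\cdot,[0,t])=\mathbf 1_{A_t}$ $\nu$-a.s. with $A_t\in\mathcal F^X_t$. $\{0,1\}$-valuedness on a countable dense set, combined with right-continuity, forces $k(\rmx,\cdot)$ to be a Dirac mass. The same identities give $\{\tau\le t\}=A_t$ modulo a $\nu$-null Borel set, for every $t$.
\end{itemize}

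In your route the Dirac structure comes from the full stopping-time property, whereas the paper gets it from terminal measurability alone. The paper's argument is shorter. Yours writes out the null-set bookkeeping that the paper leaves implicit. It also shows directly that the disintegration kernel of any $\bar{\mathbb P}\in\mathcal P_{\text{S}}$ is $\{0,1\}$-valued and adapted, i.e. lies in $\mathcal K_c^{\mathbf X}$, which gives $\mathcal P_{\text{S}}\subset\mathcal P_{\text{R}}$ essentially as a by-product.

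Your detour through $\mathcal F^{X,\nu}_{t+}$ is unnecessary, as you note yourself. Applying (ii) at the given $t$ settles the stopping-time property without needing any right-continuity of the filtration.
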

\begin{proof}
Let $\bar{\mathbb{P}} \in \mathcal{P}_{\text{S}}$. Since $\bar{\tau}$ is $\mathcal{F}_T^{\bar{X},\bar{\mathbb{P}}}$-measurable, there exists a measurable function $\tau: \Omega_X \to [0,T]$ such that $\bar{\mathbb{P}}(\bar{\tau}=\tau(\bar{X}))=1$. Using also the fact that, by definition of the set $\mathcal{P}_{\text{S}}$, the first marginal of $\bar{\mathbb{P}}$ is $\nu$, we can disintegrate $\bar{\mathbb{P}}$ as $\bar{\mathbb{P}}(dx,d\theta)=\nu(dx)\bdelta_{\tau(x)}(d\theta)$. Now, since $\bar{\tau}$ is a $\mathbb{F}^{\bar{X}, \bar{\mathbb{P}}}$ stopping time, $\bar{\tau}=\tau(\bar{X})$ $\mathbb{P}$-a.s.~and the filtration $\mathbb{F}^{\bar{X}, \bar{\mathbb{P}}}$ is complete, it follows that $\tau(\bar{X})$ is a $\mathbb{F}^{\bar{X}, \bar{\mathbb{P}}}$ stopping time. Furthermore, for any $t \in [0,T]$ and $C \in \mathcal{F}^{\bar{X}, \bar{\mathbb{P}}}_t$, we can easily verify that $(X, \tau(X))^{-1}(C) \in \mathcal{F}_t^{X,\nu}$. By taking $C=\{\tau(\bar{X}) \leq t \}$ we obtain that $\{\tau(X) \leq t\} \in \mathcal{F}_{t}^{X,\nu}$, which implies that $\tau \in \mathcal{T}$.

We now take $\tau \in \mathcal{T}$. We define $\bar{\mathbb{P}}(dx,d\theta):=\bdelta_{\tau(x)}(d\theta)\nu(dx)$. Since $\bar{\mathbb{P}} \circ \bar{X}^{-1}= \nu$, the first condition is satisfied. Define now the set $A:=\{\bar{\tau}=\tau(\bar{X})\}$ which satisfies $\bar{\mathbb{P}}(A)=1$. We have
\begin{align}
\{\tau(\bar{X}) \leq t\}&=\left(\{x \in \Omega_X:\,\, \tau(x) \leq t\} \times [0,T]\right) \nonumber \\
&= (A_1 \times [0,T]) \cup (A_2 \times [0,T]),
\end{align}
with $A_1 \in \mathcal{F}_t^X$ and $A_2 \in \mathcal{N}_{\nu}(\mathcal{F}_T^\nu)$ such that
$\{x \in \Omega_X:\,\, \tau(x) \leq t\}=A_1 \cup A_2$. Therefore, we have $A_1 \times [0,T] \in \mathcal{F}_t^{\bar{X}}$ and $A_2 \times [0,T] \in \mathcal{N}_{\bar{\mathbb{P}}}(\bar{\mathcal{F}}_T)$, such that $\{\tau(\bar{X}) \leq t\} \in \mathcal{F}_t^{\bar{X}, \bar{\mathbb{P}}}$. We conclude that
\begin{align*}
\{\bar{\tau} \leq t\}=\left(\{\tau(\bar{X}) \leq t\} \cap A \right) \cup \left(\{\tau(\bar{X}) \leq t\} \cap A^{\text{c}} \right) \in \mathcal{F}_t^{\bar{X},\bar{\mathbb{P}}}.
\end{align*}

\end{proof}

\begin{proposition}[Relation between the sets $\mathcal{P}_{\text{S}}$ and $\mathcal{P}_{\text{R}}$]
\label{pro:A00inA10}
We have the inclusion $\mathcal{P}_{\text{S}} \subset \mathcal{P}_{\text{R}}$.
\end{proposition}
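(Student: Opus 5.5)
Let $\bar{\mathbb P}\in\mathcal P_{\text{S}}$. By the characterization theorem just proved, $\bar{\mathbb P}(d\mathrm x,d\theta)=\bdelta_{\tau(\mathrm x)}(d\theta)\,\nu(d\mathrm x)$ for some $\tau\in\mathcal T$. The plan is to check membership in $\mathcal P_{\text{R}}$ through the representation \eqref{rep}. Condition (i), that the marginal on $\Omega_X$ is $\nu$, already holds by the definition of $\mathcal P_{\text{S}}$. So it suffices to exhibit a kernel $k\in\mathcal K_c^{\mathbf X}$ with $\bar{\mathbb P}=k\,\nu$.

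The natural choice is $k(\mathrm x,d\theta):=\bdelta_{\tau(\mathrm x)}(d\theta)$, and we verify three things about it.
\begin{itemize}
\item It is a regular probability kernel. For each $\mathrm x$, $k(\mathrm x,\cdot)$ is a Dirac mass on $[0,T]$. For each Borel $B$, $k(\cdot,B)=\mathbf 1_B(\tau(\cdot))$ is $\mathcal B(\Omega_X)$-measurable, because $\tau$ is measurable.
\item It has the required adaptedness. Fix $t\in[0,T]$ and $B\in\mathcal B([0,t])$. Then
\[
k(\cdot,B)=\mathbf 1_{\{\tau\le t\}}\,\mathbf 1_B(\tau\wedge t).
\]
Since $\tau$ is an $\mathbb F^{X,\nu}$-stopping time, the variable $\tau\wedge t$ is $\mathcal F^{X,\nu}_t$-measurable, and so is the indicator $\mathbf 1_{\{\tau\le t\}}$. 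Hence $k(\cdot,B)$ is $\mathcal F^{X,\nu}_t$-measurable.
\item It reproduces $\bar{\mathbb P}$: the identity $\bar{\mathbb P}=k\,\nu$ holds by construction.
\end{itemize}
Applying \eqref{rep} then gives $\bar{\mathbb P}\in\mathcal P_{\text{R}}$.

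The only delicate point is that everything rests on Proposition 1.10 of \cite{carmona2018probabilistic}, which yields \eqref{rep}. That result characterizes immersion through kernels that are measurable with respect to the completed filtration $\mathcal F^{X,\nu}_t$. This is consistent with our definition of $\mathcal T$, which uses completed filtrations, so no extra argument should be needed.

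If one prefers to avoid \eqref{rep}, immersion can be checked directly as a fallback. It is enough to show that for every bounded $\mathcal F^{\bar X}_T$-measurable $\xi$,
\[
\mathbb E^{\bar{\mathbb P}}[\xi\mid\bar{\mathcal F}_t]=\mathbb E^{\bar{\mathbb P}}[\xi\mid\mathcal F^{\bar X}_t]
\quad\text{a.s.}
\]
The reason is that $\bar{\mathcal F}_t$ is generated by $\mathcal F^{\bar X}_t$ together with the events $\{\bar\tau\le s\}$, $s\le t$. Under $\bar{\mathbb P}$ each such event coincides a.s. with $\{\tau(\bar X)\le s\}$, which lies in the $\bar{\mathbb P}$-completion of $\mathcal F^{\bar X}_t$. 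So $\bar{\mathcal F}_t$ is contained in the completion of $\mathcal F^{\bar X}_t$, and the two conditional expectations agree. In this fallback route, the main thing to handle carefully is exactly this null-set bookkeeping between $\bar{\mathcal F}_t$ and the completed filtration.
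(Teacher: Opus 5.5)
Your proof is correct and is essentially the paper's argument. You write $\bar{\mathbb P}=\bdelta_{\tau(\mathrm x)}(d\theta)\,\nu(d\mathrm x)$ with $\tau\in\mathcal T$, check that $\bdelta_{\tau(\cdot)}\in\mathcal K_c^{\mathbf X}$ (your identity $\mathbf 1_B(\tau)=\mathbf 1_{\{\tau\le t\}}\mathbf 1_B(\tau\wedge t)$ spells out the measurability step the paper only asserts), and conclude via \eqref{rep}. Your fallback is also valid: the inclusions $\mathcal F^{\bar X}_t\subset\bar{\mathcal F}_t\subset\mathcal F^{\bar X,\bar{\mathbb P}}_t$ give the equality of conditional expectations directly from the definition of $\mathcal P_{\text{S}}$, so that route does not need Proposition 1.10 of \cite{carmona2018probabilistic}.
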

\begin{proof}
Let $\bar{\mathbb{P}} \in \mathcal{P}_{\text{S}}$. Since $\tau(x)$ is $\mathcal{B}(\Omega_X)$-measurable and takes values in $[0,T]$, it follows that $\delta_{\tau(\cdot) }(B)$ is $\mathcal{B}(\Omega_X)$-measurable for all $B \in \mathcal{B}([0,T])$ and that $\delta_{\tau(x) }(\cdot)$ is a probability measure on $\mathcal{B}([0,T])$ for each $x \in \Omega_X$. Furthermore, as $\tau$ is a $\mathbb{F}^{X,\nu}$-stopping time, it follows that, for all $t \in [0,T]$ and $B \in \mathcal{B}([0,t])$, we have that $\delta_{\tau(x)  }(B)$ is $\mathcal{F}_t^{X,\nu}$-measurable. We conclude that $\delta_{\tau(x)}(d\theta) \in \mathcal{K}_c^{\mathbf{X}}$, which implies that $\bar{\mathbb{P}} \in \mathcal{P}_{\text{R}}$.
\end{proof}
Let 
$$
\mathcal{R}_0 := \{ (m^{\bbbP},\mu^{\bbbP}):\, \bbbP \in \mathcal{P}_{\text{S}}\}.
$$
\begin{theorem}
\label{coro:inclusions e 0}
Suppose that either Poisson equation regime (Assumptions \ref{hyp:Dynamics_Poisson_X_Y} and \ref{hyp:Cost Poisson OS}) holds or Explicit OU regime (Assumptions \ref{hyp:Dynamics_Discret_X_Y} and \ref{hyp:Cost discretization OS}) with additional Assumption  \ref{hyp:EJP unbounded equality} holds.  Then the following inclusion holds true: \begin{align}
\label{eq:inclusions e 0}
\overline{\text{conv}}(\mathcal{R}_0) \subset \mathcal{R}_1 \subset \mathcal{R}.
\end{align}
\end{theorem}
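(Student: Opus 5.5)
The plan is to prove the two inclusions in \eqref{eq:inclusions e 0} separately. For $\overline{\text{conv}}(\mathcal{R}_0)\subset\mathcal{R}_1$ I would show three things: $\mathcal{R}_0\subset\mathcal{R}_1$, $\mathcal{R}_1$ is convex, and $\mathcal{R}_1$ is closed. For $\mathcal{R}_1\subset\mathcal{R}$ I would use Itô's formula, as in the motivation of \eqref{eq:cons motivation measures}.

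\emph{Step 1: $\mathcal{R}_0\subset\mathcal{R}_1$ and convexity.} The first inclusion is immediate from Proposition \ref{pro:A00inA10}.

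For convexity I would use the representation \eqref{rep}. Take $\bbbP^i=k^i(\mathrm{x},d\theta)\nu(d\mathrm{x})$ with $k^i\in\mathcal{K}_c^{\mathbf{X}}$ and $\lambda\in[0,1]$. Then $\lambda\bbbP^1+(1-\lambda)\bbbP^2=(\lambda k^1+(1-\lambda)k^2)(\mathrm{x},d\theta)\nu(d\mathrm{x})$. The kernel $\lambda k^1+(1-\lambda)k^2$ is again regular and satisfies the $\mathcal{F}^{X,\nu}_t$-measurability requirement, so it lies in $\mathcal{K}_c^{\mathbf{X}}$. Hence $\mathcal{P}_{\text{R}}$ is convex.

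The map $\bbbP\mapsto(m^{\bbbP},\mu^{\bbbP})$ in \eqref{eq:Occupation m A01} is affine in $\bbbP$. Therefore $\mathcal{R}_1$ is convex and contains $\text{conv}(\mathcal{R}_0)$.

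\emph{Step 2: closedness of $\mathcal{R}_1$ (main obstacle).} I would work with the weak topology on $\mu$ and on the measure $m_t(dx)\,dt$ on $[0,T]\times\mathbb R$. Let $\bbbP^n\in\mathcal{P}_{\text{R}}$ with $(m^{\bbbP^n},\mu^{\bbbP^n})\to(m,\mu)$.

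The family $(\bbbP^n)$ is tight on $\bar\Omega_X$, because every first marginal equals $\nu$ and $[0,T]$ is compact. Along a subsequence, $\bbbP^n\to\bbbP$ weakly, and the first marginal of $\bbbP$ is still $\nu$.

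The occupation functionals are
\[
\int\!\!\int\varphi\,dm\,dt=\Ee^{\bbbP}\Big[\int_0^{\bar\tau}\varphi(t,\bar X_t)\,dt\Big],\qquad \int\psi\,d\mu=\Ee^{\bbbP}\big[\psi(\bar\tau,\bar X_{\bar\tau})\big],
\]
and for $\varphi,\psi$ bounded continuous both integrands are bounded and continuous in $(\omega,\theta)$. So the limits identify as $(m,\mu)=(m^{\bbbP},\mu^{\bbbP})$.

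The delicate point is that immersion passes to the limit. I would characterize immersion as follows. For every $t$, every bounded $\mathcal{F}^X_T$-measurable $G$, every bounded continuous $h$ of the path stopped at $t$, and every bounded continuous $\psi$,
\begin{align*}
\Ee^{\bbbP}\big[(G(\bar X)-\Ee^\nu[G|\mathcal{F}^X_t](\bar X))\,h(\bar X_{\cdot\wedge t})\,\psi(\bar\tau\wedge t)\big]=0 .
\end{align*}
Here $\sigma(\bar\tau\wedge t)$ generates $\mathcal{F}^{\bar\tau}_t$ up to the event $\{\bar\tau>t\}$, which is included.

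The integrand is not continuous in $\mathrm{x}$, because $\Ee^\nu[G|\mathcal{F}^X_t]$ need not be. To handle this, I would approximate $G$ and $\Ee^\nu[G|\mathcal{F}^X_t]$ in $L^1(\nu)$ by bounded continuous functions. Since all $\bbbP^n$ and $\bbbP$ share the $\Omega_X$-marginal $\nu$, and $\psi$ is bounded, the approximation error is uniform in $n$. This lets the identity pass to the limit, so $\bbbP\in\mathcal{P}_{\text{R}}$.

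\emph{Step 3: $\mathcal{R}_1\subset\mathcal{R}$.} Fix $\bbbP\in\mathcal{P}_{\text{R}}$. Since $\sigma$ is bounded away from zero, I would define
\[
W_t=\int_0^t\sigma^{-1}(s,\bar X_s)\big(d\bar X_s-\bar b(s,\bar X_s)\,ds\big).
\]
This process is $\mathbb F^{\bar X}$-adapted and is a Brownian motion under $\nu$. By immersion it stays a $\bar{\mathbb F}$-Brownian motion, so \eqref{eq:dynamics effective} holds in $\bar{\mathbb F}$, and $\bar\tau$ is a $\bar{\mathbb F}$-stopping time.

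Take $\varphi\in C^{1,2}_b$ and apply Itô's formula to $\varphi(t,\bar X_t)$ up to $\bar\tau\le T$. The stochastic integral has zero expectation, by the growth bounds on $\bar b$ and $\sigma$ and the moment assumptions on $m_0^X$ (or the boundedness in Assumption \ref{hyp:EJP unbounded equality}). Taking expectations gives
\[
\Ee^{\bbbP}[\varphi(\bar\tau,\bar X_{\bar\tau})]=\int\varphi(0,x)\,m_0^X(dx)+\Ee^{\bbbP}\Big[\int_0^{\bar\tau}(\partial_t+\bar{\mathcal L})\varphi(t,\bar X_t)\,dt\Big].
\]
This is exactly \eqref{eq:cons motivation measures} for $(m^{\bbbP},\mu^{\bbbP})$.

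Finally, $\mathcal{R}$ is itself closed, by the same continuity of the linear functionals, which is consistent with the chain of inclusions.
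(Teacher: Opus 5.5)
Your proposal is correct and follows essentially the paper's own proof. You get $\mathcal R_0\subset\mathcal R_1$ from Proposition \ref{pro:A00inA10} and convexity of $\mathcal P_{\text{R}}$ (hence of $\mathcal R_1$) from the kernel representation \eqref{rep}; closedness comes from tightness of $\mathcal P_{\text{R}}$, stability of immersion under weak limits, and continuity of $(\mathrm x,\theta)\mapsto\int_0^\theta\varphi(t,\mathrm x_t)\,dt$; and $\mathcal R_1\subset\mathcal R$ comes from It\^o's formula and optional sampling at $\bar\tau$ under immersion. Arguing closedness of $\mathcal R_1$ directly by subsequences, rather than through compactness of $\mathcal P_{\text{R}}$ and continuity of $\bbbP\mapsto(m^{\bbbP},\mu^{\bbbP})$, amounts to the same thing.

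Your $L^1(\nu)$ approximation of the discontinuous $\Ee^\nu[G\,|\,\mathcal F^X_t]$ fills a step the paper passes over. One loose end remains: $\sigma(\bar\tau\wedge t)$ equals $\mathcal F^{\bar\tau}_{t-}$, not $\mathcal F^{\bar\tau}_t$. You can close it by applying your identity at times $t'\downarrow t$ and using that $\Ee^\nu[G\,|\,\mathcal F^X_{t'}]\to\Ee^\nu[G\,|\,\mathcal F^X_t]$ in $L^1(\nu)$, since the $\nu$-augmented filtration of $X$ is right-continuous.
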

\begin{proof}
The proof is divided into several steps.

\vspace{3mm}

\textbf{\textit{Step 1: Compactness and convexity of $\mathcal{P}_{\text{R}}$.}} We first prove the compactness of the set $\mathcal{P}_{\text{R}}$. Since the marginal on $\Omega_X$ of any element of $\mathcal{P}_{\text{R}}$ is $\nu$, and since $[0,T]$ is compact, it follows that $\mathcal{P}_{\text{R}}$ is tight. To show that $\mathcal{P}_{\text{R}}$ is closed, let $\mathbb{P}_n \to \mathbb{P}$, with $\mathbb{P}_n \in \mathcal{P}_{\text{R}}$. Let $\phi_t$, $\psi_T$ and $\psi_t$ be, respectively, $\mathcal{F}_t^{\tau}$, $\mathcal{F}_T^{X}$ and $\mathcal{F}_t^{X}$ measurable and continuous bounded mappings. We have
\begin{align*}
\mathbb{E}^{\mathbb{P}}\left[\phi_t(\tau)\psi_T(X)\psi_t(X)\right]&=\lim_{n \to \infty} \mathbb{E}^{\mathbb{P}_n}\left[\phi_t(\tau)\psi_T(X)\psi_t(X)\right] \nonumber \\
&=\lim_{n \to \infty} \mathbb{E}^{\mathbb{P}_n}\left[ \phi_t(\tau)\mathbb{E}^{\mathbb{P}_n}[\psi_T(X)|\mathcal{F}_t^X]\psi_t(X)\right] \nonumber \\
& = \mathbb{E}^{\mathbb{P}}\left[\phi_t(\tau)\mathbb{E}^{\mathbb{P}}[\psi_T(X)|\mathcal{F}_t^X]\psi_t(X)\right].
\end{align*}
Since the continuous bounded functions generate $\mathcal{F}_t^{X}$ and $\mathcal{F}_t^{\tau}$,
it follows that $\mathcal{F}_t^{\tau}$ is independent on $\mathcal{F}_T^{X}$ conditional with respect to $\mathcal{F}_t^{X}$. We derive that $\mathbb{P} \in \mathcal{P}_{\text{R}}$.

Let us now show the convexity of the set $\mathcal{P}_{\text{R}}$. Observe that the set $\mathcal{P}_{\text{R}}$ represents the set of probability measures $\mathbb{P}$ with first marginal $\nu$ such that 
\begin{align*}
\mathbb{E}^{\mathbb{P}}\left[\phi_t(\tau)\psi_T(X)\psi_t(X)\right] = \mathbb{E}^{\mathbb{P}}\left[\mathbb{E}^{\mathbb{P}}[\phi_t(\tau)|\mathcal{F}_t^X]\psi_T(X)\psi_t(X)\right].
\end{align*}
We therefore have
\begin{align*}
\int_0^T \int_{\Omega_X} \phi_t(u) \psi_T(x) \psi_t(x) \mathbb{P}(dx,du)=\int_{\Omega_X}  \psi_T(x) \psi_t(x) \left(\int_0^T \phi_t(u)k(x,du)\right) \nu(dx).
\end{align*}
The above constraint is convex with respect to $\mathbb{P}$, and we therefore conclude that $\mathcal{P}_{\text{R}}$ is convex.\\

\textbf{\textit{Step 2: Compactness and convexity of the set $\mathcal{R}_1$.}} In view of the previous step, to show the compactness of the set $\mathcal{R}_1$, we need to prove the continuity of the application $\mathbb{P} \mapsto (m^\mathbb{P}, \mu^{\mathbb{P}})$; \,\  $\mathcal{P}_{\text{R}} \mapsto \mathcal{R}_1$ (where we use the topology of weak convergence for $\mu$ and the topology of weak convergence of associated measures for $m$). Let $(\mathbb{P}^n)_{n \geq 1} \subset \mathcal{P}_{\text{R}}$ be a sequence of probability measures weakly converging to a probability measure $\mathbb{P} \in \mathcal{P}_{\text{R}}$. Let $\varphi \in C_b([0,T] \times \mathbb{R})$. We have: 
\begin{align}\label{equality}
\int_0^T \int_{\mathbb{R}} \varphi(t,x)m_t^{\mathbb{P}^n}(dx)dt=\int_0^T \int_{\Omega_X} \int_0^T \varphi(t,\pi_t(\mathbf{x}))\mathbf 1_{(t,T]}(\theta)\mathbb{P}^n(d\mathbf{x},d\theta)dt=\int_{\Omega_X} \int_0^T \psi(\theta,\mathbf{x})\mathbb{P}^n(d\mathbf{x},d\theta),
\end{align}
where $\pi_t(\mathbf x)$ is the projection operator $\mathbf x\mapsto x_t$ and the last equality follows by Fubini and the function $\psi$ is given by $\psi(\mathbf{x}, \theta):=\int_0^\theta \varphi(t,\pi_t(\mathbf{x}))dt$.
It can be easily observed that, taking into account the assumptions on the function $\varphi$, the map $(\mathbf{x}, \theta) \mapsto \psi(\mathbf{x}, \theta)$ is continuous and bounded. Therefore, by passing to the limit in \eqref{equality}, we obtain:
\begin{align*}
\lim_{n \rightarrow \infty} \int_0^T \int_\mathbb{R} \varphi(t,x) m_t^{\mathbb{P}_n}(dx)dt=\int_0^T \int_\mathbb{R} \varphi(t,x) m_t^{\mathbb{P}}(dx)dt.
\end{align*}
The convexity of the set $\mathcal{R}_1$ follows immediately from the one of the set $\mathcal{P}_{\text{R}}$.\\

\textbf{\textit{Step 3: $\overline{\text{conv}}(\mathcal{R}_0) \subset \mathcal{R}_1$.}} Since $\mathcal{P}_{\text{S}} \subset \mathcal{P}_{\text{R}}$, we deduce that $\mathcal{R}_{0} \subset \mathcal{R}_1$. Furthermore, by convexity and compactness of $\mathcal{R}_1$ proved in the previous step, the result follows.\\

\textbf{\textit{Step 4: $\mathcal{R}_1 \subset \mathcal{R}$.}} Given $\varphi \in C^{1,2}_b([0,T]\times \Rr ; \Rr)$, let $(\bar{M}_t[\varphi])_{0 \leq t \leq T}$ be defined by 
\begin{align}
\label{eq:Martingale barX 0}   
\bar{M}_t[\varphi](\bar{\omega}) := \varphi(t, \bar{\mathrm{X}}_t(\bar{\omega})) - \int_0^t \left((\partial_t + \bar{\mathcal{L}})\varphi\right)(s, \bar{\mathrm{X}}_s(\bar{\omega})) ds.
\end{align}
By definition of the set $\mathcal{P}_{\text{R}}$, it follows that $\bar{M}_\cdot[\varphi]$ is a $(\bar{\mathbb{F}}, \bar{\mathbb{P}})$-continuous bounded martingale for any $\bar{\mathbb{P}} \in \mathcal{P}_{\text{R}}$ and for any $\varphi \in C_{b}^{1,2}([0,T] \times \mathbb{R}; \mathbb{R})$ . Fix $\mathbb{P} \in \mathcal{P}_{\text{R}}$. Thus,
by optional sampling,
\begin{align*}
\Ee^\bbbP \left[\bar{M}_{\bar{\tau}}[\varphi]\right] = \Ee^\bbbP \left[\varphi(0, \bar{\mathrm{X}}_0)\right], 
\end{align*}
yielding
\begin{align*}
\mathbb{E}^{\bar{\mathbb{P}}}\left[\varphi(\bar{\tau}, \bar{\mathrm{X}}_{\bar{\tau}})\right]=\Ee^\bbbP \left[\varphi(0, \bar{\mathrm{X}}_0)+\int_0^{\bar{\tau}}\left((\partial_t + \bar{\mathcal{L}})\varphi\right)(s, \bar{\mathrm{X}}_s) ds\right],
\end{align*}
 for any $\varphi \in C_{b}^{1,2}([0,T] \times \mathbb R; \mathbb{R})$.
By using the above equality and the definition of $(m^{\bar{\mathbb{P}}}, \mu^{\bar{\mathbb{P}}})$ given by $\eqref{eq:Occupation m A01}$, we conclude that  $(m^{\bar{\mathbb{P}}}, \mu^{\bar{\mathbb{P}}}) \in \mathcal{R}$.

\end{proof}
It remains to show that $\mathcal{R} \subset \mathcal{R}_1$. To obtain such inclusion, we prove that $\mathcal{R} \subset \overline{\text{conv}}(\mathcal{R}_0)$ holds. 

\begin{theorem}
\label{pro:inclusion e 0}
Suppose that either Poisson equation regime (Assumptions \ref{hyp:Dynamics_Poisson_X_Y} and \ref{hyp:Cost Poisson OS}) holds or Explicit OU regime (Assumptions \ref{hyp:Dynamics_Discret_X_Y} and \ref{hyp:Cost discretization OS}) with additional Assumption  \ref{hyp:EJP unbounded equality} holds. The following inclusion holds:
\begin{align}
\mathcal{R} \subset \overline{\text{conv}}(\mathcal{R}_0).
\end{align}
\end{theorem}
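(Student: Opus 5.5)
The plan is a Hahn--Banach separation argument combined with the LP duality/verification result of \cite{4LPApproach} (Theorem 2.24 there, for which Assumption \ref{hyp:EJP unbounded equality} is needed in the explicit OU regime). Suppose, by contradiction, that some $(m,\mu)\in\mathcal R$ does not lie in $\overline{\text{conv}}(\mathcal R_0)$. The set $\mathcal R$ and $\overline{\text{conv}}(\mathcal R_0)$ live in the locally convex space of pairs (flow of finite measures integrated against $dt$, finite measure on $[0,T]\times\mathbb R$), endowed with the weak topology generated by continuous bounded test functions. By Step 2 of the proof of Theorem \ref{coro:inclusions e 0}, $\overline{\text{conv}}(\mathcal R_0)\subset\mathcal R_1$ is compact and convex. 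The dual of this space (with the weak topology) consists of pairs of bounded continuous functions. Therefore there exist $\phi\in C_b([0,T]\times\mathbb R)$ and $\psi\in C_b([0,T]\times\mathbb R)$ such that
\begin{align*}
\int_0^T\!\!\int_{\mathbb R}\phi\,dm_t\,dt+\int\psi\,d\mu \;>\; \sup_{(m',\mu')\in\mathcal R_0}\Big(\int_0^T\!\!\int_{\mathbb R}\phi\,dm'_t\,dt+\int\psi\,d\mu'\Big).
\end{align*}

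The right-hand side is exactly the value of a classical (strict) optimal stopping problem for the effective diffusion $X^0$, with running reward $\phi$ and stopping reward $\psi$, taken over $\tau\in\mathcal T$ (using the characterization \eqref{def:A00} of $\mathcal P_{\text S}$). The left-hand side is bounded by the value of the corresponding linear program $\sup_{(m',\mu')\in\mathcal R}\big(\int\!\!\int\phi\,dm'\,dt+\int\psi\,d\mu'\big)$. The key step is then to invoke the equality between the LP value and the strong optimal stopping value: under the standing regularity (Lipschitz $\bar b$ and $\sigma$, nondegeneracy, and boundedness in the OU regime), Theorem 2.24 of \cite{4LPApproach} states precisely that these two values coincide for bounded continuous rewards. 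This gives ``LP value $\le$ strict value'', contradicting the strict inequality. Hence $\mathcal R\subset\overline{\text{conv}}(\mathcal R_0)$.

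If one prefers a self-contained argument for the value equality, I would take the value function $v$ of the stopping problem, mollify it to obtain $v_\epsilon\in C^{1,2}_b$ satisfying $v_\epsilon\ge\psi-\epsilon$ and $(\partial_t+\bar{\mathcal L})v_\epsilon+\phi\le\epsilon$ (a viscosity supersolution argument, using uniform ellipticity). One then plugs $v_\epsilon$ into the constraint \eqref{eq:cons motivation measures} to get $\int\psi\,d\mu+\int\!\!\int\phi\,dm\,dt\le\int v_\epsilon(0,\cdot)\,dm_0^X+C\epsilon$, and lets $\epsilon\to0$.

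The main obstacle is this value-equality step, in particular the construction of smooth supersolutions. The mollification must be uniform despite the unbounded state space and time-irregular coefficients; this is why the boundedness Assumption \ref{hyp:EJP unbounded equality} is required in the OU regime and the $C^{1/2,1}_b$ regularity in the Poisson regime. A secondary technical point is checking that the topology used in Theorem \ref{coro:inclusions e 0} makes the functionals above continuous, so that the separating functional is indeed given by a pair of $C_b$ functions.
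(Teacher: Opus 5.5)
Your proposal is correct and follows essentially the same route as the paper: strict Hahn--Banach separation of a point of $\mathcal R\setminus\overline{\text{conv}}(\mathcal R_0)$ from the compact convex set $\overline{\text{conv}}(\mathcal R_0)\subset\mathcal R_1$ by a pair of $C_b$ functions, followed by the equality of the LP value and the strict stopping value from Theorem 2.24 of \cite{4LPApproach}, which yields the contradiction. The smooth-supersolution argument you sketch as a self-contained alternative is not needed, since the paper simply invokes that theorem, so its technical difficulties do not affect the proof.
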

\begin{proof}
Arguing by contradiction, let $(\tilde{m},\tilde{\mu}) \in \mathcal{R} \setminus \overline{\text{conv}}(\mathcal{R}_0)$. By Theorem \ref{coro:inclusions e 0}, $\overline{\text{conv}}(\mathcal{R}_0)$ is a closed subset of $\mathcal{R}_1$, which is  compact. Thus,  $\overline{\text{conv}}(\mathcal{R}_0)$ is convex and compact. The singleton set $\{(\tilde{m}, \tilde{\mu})\}$ is convex and closed. Therefore, by Theorem 3.4 in \cite{rudin1991functional}, there exist $F,G \in C_b([0,T]\times\Rr)$ and $C_1,C_2 \in \Rr$ such that 
\begin{align}
\label{eq:aux inequality separation 1}
& \int_{[0,T]\times \Rr } F(t,x) m_t(dx)dt + \int_{[0,T]\times \Rr } G(t,x) \mu(dt,dx) \notag
\\
& < C_1 < C_2 < \int_{[0,T]\times \Rr } F(t,x) \tilde{m}_t(dx)dt + \int_{[0,T]\times \Rr } G(t,x) \tilde{\mu}(dt,dx)
\end{align}
for all $(m,\mu) \in \overline{\text{conv}}(\mathcal{R}_0)$.

For these functions, define
\begin{align*}
V^{\mathrm{LP}}(F,G)
&:= \sup_{(m,\mu)\in\mathcal R}
\left\{\int_{[0,T]\times\Rr}F(t,x)m_t(dx)dt+\int_{[0,T]\times\Rr}G(t,x)\mu(dt,dx)\right\},
\\
V^{\mathrm{S}}(F,G)
&:= \sup_{(m,\mu)\in\mathcal R_0}
\left\{\int_{[0,T]\times\Rr}F(t,x)m_t(dx)dt+\int_{[0,T]\times\Rr}G(t,x)\mu(dt,dx)\right\}.
\end{align*}
Assumption \ref{hyp:EJP unbounded equality} and the bounded continuity of $F$ and $G$ allow us to apply Theorem 2.24 in \cite{4LPApproach} to the single-agent optimal stopping problem with singleton control set, running payoff $F$, terminal payoff $G$, drift $\bar b$, and volatility $\sigma$. Hence
\[
V^{\mathrm{LP}}(F,G)=V^{\mathrm{S}}(F,G).
\]
On the other hand, taking the supremum over $(m,\mu) \in \mathcal{R}_0$ in \eqref{eq:aux inequality separation 1} and recalling that $(\tilde{m},\tilde{\mu}) \in \mathcal{R}$, we obtain 
\begin{align*}
& V^{\mathrm{S}}(F,G)  \leq C_1 < C_2  \leq V^{\mathrm{LP}}(F,G), 
\end{align*}
which contradicts the equality of the two values. We conclude that $\mathcal{R} \subset \overline{\text{conv}}(\mathcal{R}_0)$.
\end{proof}

Using the above results, we get the two following representation results of an admissible couple of measures $(m,\mu) \in \mathcal{R}$.

\begin{theorem}[\textit{Representation of occupation measures in terms of randomized stopping}]
\label{thm:representation2}
Suppose that either Poisson equation regime (Assumptions \ref{hyp:Dynamics_Poisson_X_Y} and \ref{hyp:Cost Poisson OS}) holds or Explicit OU regime (Assumptions \ref{hyp:Dynamics_Discret_X_Y} and \ref{hyp:Cost discretization OS}) with additional Assumption  \ref{hyp:EJP unbounded equality} holds. Then, $(m,\mu)\in \mathcal{R}$ if and only if there exists $k \in \mathcal{K}_c^{\mathbf{X}}$ such that:
    \begin{align}\label{repp}
    & m_t(B) = \Ee^{\nu}\left[ \textbf{1}_B(X_t) k(X,(t,T]) \right], \quad B \in \mathcal{B}(\mathbb R),\quad t\in [0,T], \nonumber 
    \\
    & \mu(B) = \Ee^{\nu}\left[\int_{[0,T]}\textbf{1}_B(\theta,X_\theta)k(X,d\theta) \right], \quad B \in \mathcal{B}([0,T]\times \mathbb R).
    \end{align}
\end{theorem}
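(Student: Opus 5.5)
The statement is essentially the identity $\mathcal R=\mathcal R_1$, unpacked through the kernel representation \eqref{rep} of $\mathcal P_{\text R}$. I would therefore prove the two implications separately, combining Theorem \ref{coro:inclusions e 0} (which gives $\overline{\text{conv}}(\mathcal R_0)\subset\mathcal R_1\subset\mathcal R$) with Theorem \ref{pro:inclusion e 0} (which gives $\mathcal R\subset\overline{\text{conv}}(\mathcal R_0)$). Chaining these yields
\[
\mathcal R\subset\overline{\text{conv}}(\mathcal R_0)\subset\mathcal R_1\subset\mathcal R,
\]
so $\mathcal R=\mathcal R_1$. What remains is to show that membership in $\mathcal R_1$ is the same as the existence of a kernel $k\in\mathcal K_c^{\mathbf X}$ satisfying \eqref{repp}.

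\emph{Sufficiency.} Let $k\in\mathcal K_c^{\mathbf X}$ and define $\bar{\mathbb P}(d\rmx,d\theta):=k(\rmx,d\theta)\nu(d\rmx)$. By \eqref{rep}, $\bar{\mathbb P}\in\mathcal P_{\text R}$. I would then compute the occupation measures \eqref{eq:Occupation m A01} by disintegration and Fubini. For the flow,
\[
m^{\bar{\mathbb P}}_t(B)=\int_{\Omega_X}\int_{[0,T]}\mathbf 1_B(\rmx_t)\mathbf 1_{(t,T]}(\theta)\,k(\rmx,d\theta)\,\nu(d\rmx)=\Ee^\nu[\mathbf 1_B(X_t)k(X,(t,T])].
\]
For the stopping measure, $\mu^{\bar{\mathbb P}}(B)=\Ee^\nu\big[\int\mathbf 1_B(\theta,X_\theta)k(X,d\theta)\big]$. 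Joint measurability of $(\rmx,\theta)\mapsto\mathbf 1_B(\theta,\rmx_\theta)$ holds by continuity of paths. Hence the pair defined by \eqref{repp} lies in $\mathcal R_1\subset\mathcal R$.

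\emph{Necessity.} Let $(m,\mu)\in\mathcal R$. Since $\mathcal R=\mathcal R_1$, there is $\bar{\mathbb P}\in\mathcal P_{\text R}$ with $(m,\mu)=(m^{\bar{\mathbb P}},\mu^{\bar{\mathbb P}})$. By \eqref{rep}, $\bar{\mathbb P}=k(\rmx,d\theta)\nu(d\rmx)$ for some $k\in\mathcal K_c^{\mathbf X}$, and the same Fubini computation as above gives \eqref{repp}.

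The main point needing care is the equality \eqref{rep}, i.e.\ that immersion of $\mathbb F^{\bar X}$ in $\bar{\mathbb F}$ is equivalent to the disintegration kernel being $\mathbb F^{X,\nu}$-adapted in the sense of $\mathcal K_c^{\mathbf X}$. This is where the hypotheses actually enter. I take it from the paper's appeal to Proposition 1.10 of \cite{carmona2018probabilistic}, so the only genuinely new ingredient is the reverse inclusion $\mathcal R\subset\mathcal R_1$. That inclusion rests on Theorem \ref{pro:inclusion e 0}, which uses Theorem 2.24 of \cite{4LPApproach} and requires the standing assumptions, including Assumption \ref{hyp:EJP unbounded equality} in the OU regime. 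One small check I would include explicitly is that the closure in Theorem \ref{coro:inclusions e 0} is taken in the same topology used in the separation argument (weak convergence of $\mu$ and of $m_t(dx)dt$), so that the chain of inclusions is consistent.
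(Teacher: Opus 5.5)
Your proposal is correct and follows the paper's proof essentially verbatim. You chain Theorems \ref{coro:inclusions e 0} and \ref{pro:inclusion e 0} to get $\mathcal R=\mathcal R_1$, then pass between $\mathcal R_1$ and the kernel form via the representation \eqref{rep} of $\mathcal P_{\text{R}}$; your disintegration/Fubini computation just makes explicit what the paper leaves implicit.

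One wording point: the regime hypotheses, in particular Assumption \ref{hyp:EJP unbounded equality}, are used for the reverse inclusion $\mathcal R\subset\overline{\text{conv}}(\mathcal R_0)$, not for \eqref{rep}. The sufficiency direction only needs $\mathcal R_1\subset\mathcal R$.
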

\begin{proof}
By Theorems \ref{coro:inclusions e 0} and \ref{pro:inclusion e 0}, we get
\begin{align*}
\overline{\text{conv}}(\mathcal{R}_0) = \mathcal{R}_1 = \mathcal{R}.
\end{align*}    
Therefore, given $(m,\mu)\in \mathcal{R}$, we have $(m,\mu) = (m^{\bbbP},\mu^{\bbbP})$ for some $\bbbP \in \mathcal{P}_{\text{R}}$. In view of the representation result of the set $\mathcal{P}_{\text{R}}$ (see \eqref{rep}), we get the existence of a kernel $k \in \mathcal{K}_c^{\mathbf{X}}$ such that $\eqref{repp}$ holds.

Conversely, let $k \in \mathcal{K}_c^{\mathbf{X}}$. We can then construct a probability measure $\bar{\mathbb{P}} \in \mathcal{P}_{\text{R}}$, and in view of the above relation, we get that $(m^{\bbbP},\mu^{\bbbP}) \in \mathcal{R}$.
\end{proof}

We now conclude the proof of Theorem \ref{thm:exist_effective}.
\begin{proof}
By Theorem \ref{existLP}, there exists a linear programming MFG equilibrium $(m^\star,\mu^\star)\in\mathcal R$. By Theorem \ref{thm:representation2}, there exists a kernel $k_c^\star\in \mathcal K_c^{\mathbf X}$ such that
\begin{align}
\label{eq:alt-exist-effective-rep-star}
& m_t^\star(B)=\Ee^\nu\big[\mathbf 1_B(X_t)\,k_c^\star(X,(t,T])\big], \quad B\in\mathcal B(\mathbb R),\ t\in[0,T], \notag\\
& \mu^\star(B)=\Ee^\nu\left[\int_{[0,T]}\mathbf 1_B(\theta,X_\theta)\,k_c^\star(X,d\theta)\right], \quad B\in\mathcal B([0,T]\times\mathbb R).
\end{align}

We now transfer this kernel to the original probability space. Since $X^0$ solves
\[
dX_t^0=\bar b(t,X_t^0)\,dt+\sigma(t,X_t^0)\,dW_t^{\mathrm x},
\]
with $\sigma$ bounded away from zero, we can recover the Brownian motion from $X^0$ through
\[
W_t^{\mathrm x}=\int_0^t \sigma^{-1}(s,X_s^0)\,dX_s^0-\int_0^t \sigma^{-1}(s,X_s^0)\bar b(s,X_s^0)\,ds.
\]
Hence the completed filtrations generated by $(X_0,W^{\mathrm x})$ and by $X^0$ coincide:
\[
\mathbb F^{\mathrm x}=\mathbb F^{X^0}.
\]
Define
\[
k^\star(\omega,B):=k_c^\star(X^0(\omega),B), \qquad B\in\mathcal B([0,T]).
\]
Because $k_c^\star(\cdot,B)$ is $\mathcal F_t^{X,\nu}$-measurable for every $B\in\mathcal B([0,t])$, it follows that $k^\star(\cdot,B)$ is $\mathcal F_t^{X^0}=\mathcal F_t^{\mathrm x}$-measurable. Therefore $k^\star\in\mathcal K^{\mathrm x}$. Moreover, since $\nu$ is the law of $X^0$, \eqref{eq:alt-exist-effective-rep-star} becomes
\begin{align}
\label{eq:alt-exist-effective-rep-original}
& m_t^\star(B)=\Ee\big[\mathbf 1_B(X_t^0)\,k^\star(\cdot,(t,T])\big], \quad B\in\mathcal B(\mathbb R),\ t\in[0,T], \notag\\
& \mu^\star(B)=\Ee\left[\int_{[0,T]}\mathbf 1_B(\theta,X_\theta^0)\,k^\star(\cdot,d\theta)\right], \quad B\in\mathcal B([0,T]\times\mathbb R).
\end{align}
Thus $(k^\star,m^\star,\mu^\star)$ satisfies the consistency part of Definition \ref{def:exist_effective}.

It remains to prove optimality. Let $k\in\mathcal K^{\mathrm x}$ be arbitrary. We construct a path-space kernel $k_c\in\mathcal K_c^{\mathbf X}$ such that
\[
k(\omega,B)=k_c(X^0(\omega),B)\quad\mathbb P\text{-a.s. for all }B\in\mathcal B([0,T]).
\]
Indeed, for every rational $r\in[0,T]\cap\mathbb Q$, the random variable $k(\cdot,[0,r])$ is $\mathcal F_r^{\mathrm x}=\mathcal F_r^{X^0}$-measurable, so by the Doob--Dynkin lemma there exists a Borel map $F(r,\cdot):\Omega_X\to[0,1]$ such that
\[
k(\omega,[0,r])=F(r,X^0(\omega))\quad\mathbb P\text{-a.s.}
\]
One can then choose a full $\nu$-measure set on which $r\mapsto F(r,\mathrm x)$ is increasing on the rationals, extend it to all $t\in[0,T]$ by right-continuity, and obtain a regular probability kernel $k_c\in\mathcal K_c^{\mathbf X}$ satisfying the above identity.

Define the pair $(m^k,\mu^k)$ by
\begin{align*}
& m_t^k(B):=\Ee\big[\mathbf 1_B(X_t^0)\,k(\cdot,(t,T])\big], \quad B\in\mathcal B(\mathbb R),\ t\in[0,T],\\
& \mu^k(B):=\Ee\left[\int_{[0,T]}\mathbf 1_B(\theta,X_\theta^0)\,k(\cdot,d\theta)\right], \quad B\in\mathcal B([0,T]\times\mathbb R).
\end{align*}
Since $k(\cdot,B)=k_c(X^0(\cdot),B)$ a.s. and $\nu$ is the law of $X^0$, we also have
\begin{align*}
& m_t^k(B)=\Ee^\nu\big[\mathbf 1_B(X_t)\,k_c(X,(t,T])\big],\\
& \mu^k(B)=\Ee^\nu\left[\int_{[0,T]}\mathbf 1_B(\theta,X_\theta)\,k_c(X,d\theta)\right].
\end{align*}
Hence, by Theorem \ref{thm:representation2}, $(m^k,\mu^k)\in\mathcal R$.

Now, by the defining optimality of the LP equilibrium $(m^\star,\mu^\star)$,
\begin{align*}
&\int_0^T\!\!\int_{\mathbb R}\bar f(t,x,m_t^\star)m_t^\star(dx)\,dt
+\int_{[0,T]\times\mathbb R} g(t,x,\mu^\star)\mu^\star(dt,dx)
\\ &\geq
\int_0^T\!\!\int_{\mathbb R}\bar f(t,x,m_t^\star)m_t^k(dx)\,dt
+\int_{[0,T]\times\mathbb R} g(t,x,\mu^\star)\mu^k(dt,dx).
\end{align*}
Using \eqref{eq:alt-exist-effective-rep-original} for $(m^\star,\mu^\star)$ and the definitions of $(m^k,\mu^k)$, this inequality is exactly
\[
J^0(k^\star,m^\star,\mu^\star)\geq J^0(k,m^\star,\mu^\star).
\]
Since $k\in\mathcal K^{\mathrm x}$ was arbitrary, $(k^\star,m^\star,\mu^\star)$ is an effective MFG equilibrium with randomized stopping.
\end{proof}

\section{Two-scale Mean-field games of controlled diffusion}
\label{Sec:Control}

\subsection{Notation, assumptions, and definitions}
\label{Sec:Control intro}
Given a filtration $\mathbb{F}$, we denote by $\mathcal{A}(\mathbb{F})$ the set of $\mathbb{F}$--progressively measurable ${A}$--valued processes, where $A \subset \mathbb{R}$ is a convex compact set. Let $\delta >0$. Given $\alpha \in \mathcal{A}(\mathbb{F}^{\bm{W}})$, consider the following system:
\begin{align}
\label{eq:dynamics 2S--c}
\begin{cases}
dX^{\delta,\alpha}_t = ( b_1(t,X^{\delta,\alpha}_t,Y^\delta_t) + b_2(X^{\delta,\alpha}_t,\alpha_t)) dt + \sigma (t,X^{\delta,\alpha}_t) dW^{\text{x}}_t, \quad X^{\delta,\alpha}_0 = X_0 \sim m_0^X,
\\
dY^\delta_t = \frac{1}{\delta} B(Y^\delta_t) dt + \frac{1}{\sqrt{\delta}}\Sigma (Y^\delta_t) dW^{\text{y}}_t + \frac{1}{\sqrt{\delta}}\hat{\Sigma} (Y^\delta_t) dW^{\text{c}}_t, \quad Y^\delta_0 = Y_0 \sim m^Y_0,
\end{cases} \, 0 \leq t \leq T,
\end{align}
with $m_0^X \in \mathcal P(\mathbb R)$ and $m_0^Y \in \mathcal P(\mathbb R)$. 
We impose the following assumptions on the coefficients describing the dynamics, which corresponds to Poisson equation regime in the controlled case. The explicit OU regime could be treated along the same lines, but we omit it to save space. 
\begin{assumption}
\label{hyp:Dynamics_Poisson_X_Y--c} 
\leavevmode 
\begin{enumerate}[label=(\roman*)] 
    \item \label{hyp:RocknerAG--c2} There exists a constant $\lambda>0$ such that for all $x\in \mathbb R$, and $t\in [0,T]$, $\tfrac{1}{\lambda} \leq \sigma^2(t,x)\leq \lambda$

    \item \label{hyp:RocknerR1--1--c2} 
    $m_0^X \in \mathcal{P}_2(\mathbb R)$.     $b_1(\cdot,\cdot,y) \in C^{1/2,1}_b([0,T]\times\mathbb R)$ for all $y\in\Rr$ and $\sigma \in C^{1/2,1}_b([0,T]\times\mathbb R)$. 
    Moreover, the coefficient $b_2$ is continuous and bounded, and there exists $C>0$ such that for all $(t,x,x',y,y',a) \in [0,T]\times \mathbb R^2 \times \Rr^2 \times A$,
    \begin{align*}
    & |b_1(t,x,y)| \leq C(1+|y|), 
    \quad |b_1(t,x,y)-b_1(t,x',y')| \leq C(|x-x'|+|y-y'|), 
    \\
    & |b_2(x,a)-b_2(x',a)| \leq C|x-x'|, 
    \quad |\sigma(t,x)-\sigma(t,x')| \leq C|x-x'|.
    \end{align*}
 
    
    \item \label{hyp:RocknerAs--c2} There exists a constant $\lambda>0$ such that for all $y\in \Rr$, $\tfrac{1}{\lambda} \leq \Sigma^2(y) + \hat{\Sigma}^2(y)\leq \lambda$;

    \item \label{hyp:RocknerAb--c2} $\lim_{|y| \to \infty} yB(y) = -\infty$
  
    \item \label{hyp:RocknerR1--2--c2} $m^Y_0 \in \mathcal{P}_2(\Rr)$, and $B,\Sigma,\hat{\Sigma} \in C^1_b(\Rr)$.
\end{enumerate}
\end{assumption}
Under Assumption \ref{hyp:Dynamics_Poisson_X_Y--c}, for any $\alpha \in \mathcal{A}(\mathbb{F}^{\bm{W}})$ the system \eqref{eq:dynamics 2S--c} admits  a unique strong solution (Theorem 3.1.1 Chapter 3 in \cite{liu2015stochastic}, also Theorem 6.16, Chapter 6 in \cite{yong2012stochastic} and \cite{rozovsky2005note}
) which we denote by $(X^{\delta; \alpha},Y^{\delta})$. 

We introduce the following notation
\begin{align*}
& J^\delta(\alpha,m):=\mathbb{E}\left[\int_0^T f(t,X_t^{\delta,\alpha},Y_t^\delta,\alpha_t,m^{(\text{x},\text{y})}_t)dt+ g(X_{T}^{\delta,\alpha},m^{(\text{x})}_T)\right],
\end{align*}
where $(m_t)_{0\leq t\leq T} \in \mathbb V(\mathbb R \times \mathbb R\times A)$. 
We assume the following on the reward functions $f$ and $g$:
\begin{assumption}
\label{hyp:f and g--c}
The functions $f$ and $g$ have the representation 
\begin{align*}
f(t,x,y,a,\nu) & = F^1(t,x,y) F^2\left(t,x,\int_{\mathbb R}\hat{f}_1(x')\nu^{(\text{x})}(dx')\right)+F^3(t,x)\int_{\mathbb R\times \mathbb{R}}\hat{f}_2(x',y')\nu(dx',dy') 
\\
& \quad + F\left(t,x,a,\int_{\mathbb R}\hat{f}(x')\nu^{(\text{x})}(dx') \right),\qquad \nu \in \mathcal P(\mathbb R\times \mathbb R), 
\\
g(x,\nu) & = G\left(x,\int_{\mathbb R} \hat{g}(x')\nu(dx')\right),\qquad \nu \in \mathcal P(\mathbb R). 
\end{align*}
where $F^1:[0,T]\times \mathbb R\times \mathbb{R} \to \mathbb{R}$, $F^2:[0,T]\times \mathbb R\times \mathbb{R} \to \mathbb{R}$, $F^3:[0,T]\times \mathbb R  \to \mathbb{R}$, $F:[0,T] \times \mathbb R \times A \times \mathbb{R} \to \mathbb{R}$, $\hat{g}: \mathbb R \to \mathbb{R}$, $\hat{f}_1\in C^2_b(\mathbb R) $, $\hat{f}: \mathbb R \to \mathbb{R}$, and $\hat{f}_2:\mathbb R\times \mathbb{R} \to \mathbb{R}$, $G:\mathbb R\times \mathbb{R} \to \mathbb{R}$ are continuous and bounded, $F^2\in C_b^{1,2,1}([0,T]\times \mathbb R\times \mathbb{R})$, $F^3$ is $\gamma^3$-H\"{o}lder continuous uniformly on $x$, with $\gamma^3 \geq 1/2$, and there exits $C>0$ such that for all $(t,x,x',y, y', \beta,\beta',a)\in [0,T]\times \mathbb R^2 \times \mathbb{R}^4 \times A$
\begin{align*}
&|F(t,x,a,\beta)-F(t,x',a,\beta')|, \leq C \left(|x - x'|+|\beta - \beta'|\right), 
\\
&|F^1(t,x,y)-F^1(t,x',y')|,\, |\hat{f}_2(x,y)-\hat{f}_2(x',y')|\leq C \left(|x - x'|+|y - y'|\right), 
\\
& | F^2(t,x,\beta) - F^2(t,x',\beta')|,\,  | G(x,\beta) - G(x',\beta')| \leq C\left(|x - x'|+ |\beta - \beta'| \right),
\\
&| F^3(t,x) - F^3(t,x')|, \, |\hat{f}(x)-\hat{f}(x')|,  \, |\hat{g}(x)-\hat{g}(x')| \leq C |x-x'|, 
\end{align*}
and $F^1(\cdot,\cdot,y) \in C^{1/2,1}_b([0,T]\times \mathbb R)$, $\hat{f}_2(\cdot,y) \in C^{1}_b(\mathbb R)$, for all $y\in \Rr$.
\end{assumption}

We introduce the following definition of a \textit{two-scale $\varepsilon$-MFG equilibrium}.

\begin{definition}[Two-scale $\varepsilon$-MFG and MFG equilibrium with \textit{strict} control]\label{def:Strong Nash2s--strict c}
Let $\varepsilon\geq 0$ and $\delta>0$. We say that $(\alpha^{\varepsilon,\delta},m^{\varepsilon,\delta})$ is a \textit{two-scale $\varepsilon$-MFG equilibrium with strict control} if:
\begin{itemize}
\item[(i)] $\alpha^{\varepsilon,\delta} \in \mathcal{A}(\mathbb{F}^{\bm{W}})$ and for all $\alpha' \in \mathcal{A}(\mathbb{F}^{\bm{W}})$,
\begin{align*}
& J^\delta(\alpha^{\varepsilon,\delta},m^{\varepsilon,\delta}) \geq J^\delta(\alpha',m^{\varepsilon,\delta})-\varepsilon
\end{align*}
\item[(ii)] $(m^{\varepsilon,\delta}_t)_{0 \leq t \leq T}\in \mathbb V(\mathbb R\times \mathbb R \times A)$ such that 
\begin{align*}
& m^{\varepsilon,\delta}_t(B) = \Ee\left[ \textbf{1}_{B}(X^{\delta,\alpha^{\varepsilon,\delta}}_t,Y^{\delta}_t,\alpha^{\varepsilon,\delta}_t) \bigg| \mathcal{F}^{\text{c}}_t\right]\ \text{a.s.} \quad B \in \mathcal{B}(\mathbb R \times \mathbb R \times A),\,\, \quad t \in [0,T]. \notag
\end{align*}
\end{itemize}
When $\varepsilon=0$, we drop the superscript and say that $(\alpha^{\delta},m^{\delta})$ is a \textit{two-scale  MFG equilibrium with strict control}.
\end{definition}
We define MFG equilibria with \textit{relaxed control}. To this end, we first introduce the set of admissible relaxed controls. For a filtration $\mathbb{F}$, let $\mathcal{A}^{\mathcal{R}}(\mathbb{F})$ denote the set of $\mathbb{F}$-progressively measurable processes taking values in $\mathcal{P}(A)$. Under Assumption \ref{hyp:Dynamics_Poisson_X_Y--c}, Theorem 3.1.1 in \cite{liu2015stochastic}, applied to the random coefficient 
\[
(t,x,\omega) \mapsto \int_{A} b_2(x,a)\,\Lambda_t(da)(\omega),
\] 
ensures the existence and uniqueness of a strong solution to the stochastic differential equation with relaxed control
\begin{align}
\label{eq:relaxed}
\begin{cases}
dX^{\delta,\Lambda}_t = ( b_1(t,X^{\delta,\Lambda}_t,Y^\delta_t) + \int_{A} b_2(X^{\delta,\Lambda}_t,a)\,\Lambda_t(da)) dt + \sigma (t,X^{\delta,\Lambda}_t) dW^{\text{x}}_t, \quad X^{\delta,\Lambda}_0 = X_0 \sim m_0^X,
\\
dY^\delta_t = \frac{1}{\delta} B(Y^\delta_t) dt + \frac{1}{\sqrt{\delta}}\Sigma (Y^\delta_t) dW^{\text{y}}_t + \frac{1}{\sqrt{\delta}}\hat{\Sigma} (Y^\delta_t) dW^{\text{c}}_t, \quad Y^\delta_0 = Y_0 \sim m^Y_0,
\end{cases}
\end{align}
 for any $\Lambda \in \mathcal{A}^{\mathcal{R}}(\mathbb{F}^{\bm{W}})$. Let
\begin{align*}
& J^\delta(\Lambda,m):=\mathbb{E}\left[\int_0^T \int_{A}f(t,X_t^{\delta,\Lambda},Y_t^\delta,a,m^{(\text{x},\text{y})}_t)\Lambda_t(da)dt+ g(X_{T}^{\delta,\Lambda},m^{(\text{x})}_T)\right],
\end{align*}
with $ (m_t)_{0\leq t\leq T} \in \mathbb V(\mathbb R \times \mathbb R\times A)$. 
We introduce the following definitions of a \textit{two-scale $\varepsilon$-MFG equilibrium with relaxed control} and \textit{two-scale MFG equilibrium with relaxed control}.
\begin{definition}[Two-scale $\varepsilon$-MFG and MFG equilibrium with \textit{relaxed} control]\label{def:2S e--MFG with randomized control}
Let $\varepsilon\geq 0$ and $\delta>0$. We say that $(\Lambda^{\varepsilon,\delta},m^{\varepsilon,\delta})$ is a \textit{two-scale $\varepsilon$-MFG equilibrium with relaxed control} if:
\begin{itemize}
\item[(i)] $\Lambda^{\varepsilon,\delta} \in \mathcal{A}^\mathcal{R}(\mathbb{F}^{\bm{W}})$ and for all $\Lambda' \in \mathcal{A}^{\mathcal{R}}(\mathbb{F}^{\bm{W}})$,
\begin{align*}
& J^\delta(\Lambda^{\varepsilon,\delta},m^{\varepsilon,\delta}) \geq J^\delta(\Lambda',m^{\varepsilon,\delta}) - \varepsilon
\end{align*}
\item[(ii)] $(m^{\varepsilon,\delta}_t)_{0 \leq t \leq T}\in \mathbb V(\mathbb R\times \mathbb R \times A)$ such that 
\begin{align*}
& m^{\varepsilon,\delta}_t(B \times C) = \Ee\left[ \textbf{1}_{B}(X^{\delta,\Lambda^{\varepsilon,\delta}}_t\!\!,Y^{\delta}_t) \Lambda^{\varepsilon,\delta}_t(C) \bigg| \mathcal{F}^{\text{c}}_t\right] \text{a.s.} \quad B \in \mathcal{B}(\mathbb R\times \mathbb R),\, C \in \mathcal{B}(A),\,\, \quad t \in [0,T]. \notag
\end{align*}
\end{itemize}
When $\varepsilon=0$, we drop the superscript and say that $(\Lambda^{\delta},m^{\delta})$ is a \textit{two-scale equilibrium with relaxed control}.
\end{definition}
We aim to prove the existence of a two-scale $\varepsilon$-MFG equilibrium. To do so, we introduce the following (averaged) dynamics. Under Assumption \ref{hyp:Dynamics_Poisson_X_Y--c}--\ref{hyp:RocknerAs--c2}--\ref{hyp:RocknerAb--c2}, let $\bar{\mu}$ be the invariant measure of \eqref{eq:Fast system}, and let us recall the notation \eqref{eq:averaging}. 

Given $\Lambda \in \mathcal{A}^{\mathcal{R}}(\mathbb{F}^{\bm W})$, we consider the SDE
\begin{align}
\label{eq:dynamics effective--cR}
& dX^{0,\Lambda}_t \!=\! (\bar{b}_1(t,X^{0,\Lambda}_t) + \!\int_{A} \! b_2(X^{0,\Lambda}_t,a)\Lambda_t(da) ) dt + \sigma (t,X^{0,\Lambda}_t) dW^{\text{x}}_t, \quad X^{0,\Lambda}_0 = X_0 \sim m_0^X, \, 0\leq t \leq T.
\end{align}
Similar to \eqref{eq:dynamics 2S--c}, Assumption \ref{hyp:Dynamics_Poisson_X_Y--c} for the slow component, together with the Lipschitz continuity of $\bar{b}_1$ established in \eqref{eq:Lipschitz eff os}, implies that for any $\Lambda \in \mathcal{A}^{\mathcal{R}}(\mathbb{F}^{\bm{W}})$, Theorem 3.1.1 in \cite{liu2015stochastic}, applied with the random coefficient $(t,x,\omega)\mapsto \int_{A} b_2(x,a)\Lambda_t(da)(\omega)$, ensures the existence and uniqueness of a strong solution to \eqref{eq:dynamics effective--cR}. For $\alpha \in \mathcal{A}(\mathbb{F}^{\bm{W}})$, we denote by $X^{0,\alpha}$ the solution corresponding to $\Lambda_t=\delta_{\alpha_t}$.

The cost functional for the effective MFG problem is given by
$$J^{0}(\Lambda,m):=\mathbb{E}\left[\int_0^T \int_{A} \bar{f}(t,X_t^{0,\Lambda},a,m^{(\text{x})}_t)\Lambda_t(da)dt+ g(X_{T}^{0,\Lambda},m^{(\text{x})}_T)\right], \qquad m\in \mathcal V(\mathbb R\times A), $$
where
\begin{align*} 
\bar{f}(t,x,a,\nu)& :=\bar{F}^1(t,x)F^2\left(t,x,\int_{\mathbb R}\hat{f}_1(x')\nu(dx'))\right) + F^3(t,x)\int_{\mathbb R}\bar{\hat{f}}_2(x')\nu(dx')
\\
& \quad + F\left(t,x,a,\int_{\mathbb R}\hat{f}(x')\nu(dx') \right),\qquad \nu \in \mathcal P(\mathbb R). 
\end{align*}
We write $J^{0}(\alpha,m)$ for the same expression with $\Lambda_t$ replaced by $\delta_{\alpha_t}$. 

We now introduce the definitions of \textit{effective MFG equilibrium} with \textit{strict} and \textit{relaxed controls}.
\begin{definition}[Effective MFG equilibrium with \textit{strict} control]\label{def:strict}
We say that $(\alpha,m)$ is an \textit{effective MFG equilibrium with strict control} if
\begin{itemize}
\item[(i)] $\alpha \in \mathcal{A}(\mathbb{F}^{\text{x}})$ and for all $\alpha' \in \mathcal{A}(\mathbb{F}^{\text{x}})$,
\begin{align*}
& J^0(\alpha,m) \geq J^0(\alpha',m)
\end{align*}
\item[(ii)] $(m_t)_{0 \leq t \leq T} \in \mathcal V(\mathbb R\times A)$  satisfies the consistency condition:
\begin{align*}
& m_t(B) = \Ee\left[ \textbf{1}_{B}(X^{0,\alpha}_t,\alpha_t)\right]  \quad B \in \mathcal{B}(\mathbb R \times A),\,\, \quad t \in [0,T]. \notag
\end{align*}
\end{itemize}
\end{definition}
The definition of an MFG equilibrium with \textit{relaxed control} can be written as follows:

\begin{definition}[Effective MFG equilibrium with \textit{relaxed} control]\label{defeffective}
We say that $(\Lambda,m)$ is an \textit{effective MFG equilibrium with relaxed control} if
\begin{itemize}
\item[(i)] $\Lambda \in \mathcal{A}^{\mathcal{R}}(\mathbb{F}^{\text{x}})$ and for all $\Lambda' \in \mathcal{A}^{\mathcal{R}}(\mathbb{F}^{\text{x}})$,
\begin{align*}
& J^0(\Lambda,m) \geq J^0(\Lambda',m)
\end{align*}
\item[(ii)] $(m_t)_{0 \leq t \leq T}$  satisfies the consistency condition:
\begin{align*}
& m_t(B\times C) = \Ee\left[ \textbf{1}_{B}(X^{0,\Lambda}_t)\Lambda_t(C)\right]  \quad B \in \mathcal{B}(\mathbb R),\, C \in \mathcal{B}(A),\,\, \quad t \in [0,T]. \notag
\end{align*}
\end{itemize}
\end{definition}

\subsection{Two-scale $\varepsilon$-MFG equilibrium with \textit{relaxed control}} 
\label{Sec:Control existence e-MFG}
In this section, we present a method for constructing a two-scale $\varepsilon$-MFG equilibrium for the MFG problem with common noise, based on a corresponding equilibrium of the effective MFG problem. The existence of an equilibrium for the effective MFG problem will be addressed in the subsequent section.

To this end, we first establish an averaging result for controlled diffusion processes in a two-scale setting, where the slow component is controlled. This result extends classical averaging principles by deriving effective dynamics for the slow component under control. The derivation relies on the separability assumption on $b_1$ and $b_2$ in Assumption \ref{hyp:Dynamics_Poisson_X_Y--c} --\ref{hyp:RocknerR1--1--c2}. A detailed proof is provided in the Appendix.
\begin{proposition}
\label{pro:Strong rate diffusion--c}
Suppose that Assumption \ref{hyp:Dynamics_Poisson_X_Y--c} holds. Let $0 < \delta <1/2$. Then, for all $T>0$, there exists a constant $C>0$ independent of $\delta$ such that for all $\Lambda \in \mathcal{A}^{\mathcal{R}}({\mathbb{F}^{\bm{W}}})$,
\[
\sup\limits_{0\leq t \leq T} \Ee|X^{\delta,\Lambda}_t - X^{0,\Lambda}_t|^2 \leq C \delta.
\]
\end{proposition}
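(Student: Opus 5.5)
We prove Proposition~\ref{pro:Strong rate diffusion--c} with the Poisson-equation method, as in \cite{rockner2021averaging}, adapted to a control that enters only through the separable term $b_2$. Write $Z_t := X^{\delta,\Lambda}_t - X^{0,\Lambda}_t$. Subtracting \eqref{eq:dynamics effective--cR} from \eqref{eq:relaxed} gives
\[
dZ_t = \Big( b_1(t,X^{\delta,\Lambda}_t,Y^\delta_t) - \bar b_1(t,X^{\delta,\Lambda}_t) \Big)dt + \Big( \bar b_1(t,X^{\delta,\Lambda}_t) - \bar b_1(t,X^{0,\Lambda}_t) + \int_A \big(b_2(X^{\delta,\Lambda}_t,a) - b_2(X^{0,\Lambda}_t,a)\big)\Lambda_t(da) \Big)dt + \big(\sigma(t,X^{\delta,\Lambda}_t)-\sigma(t,X^{0,\Lambda}_t)\big)dW^{\mathrm x}_t .
\]
The same $\Lambda$ drives both equations. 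By \eqref{eq:Lipschitz eff os} and the Lipschitz bounds on $b_2$ (uniform in $a$) and $\sigma$, the second drift and the diffusion term are bounded by $C|Z_t|$, whatever $\Lambda$ is. Apply Itô's formula to $|Z_t|^2$ and take expectations:
\[
\Ee|Z_t|^2 \le C\int_0^t \Ee|Z_s|^2 ds + 2\Big|\Ee \int_0^t Z_s\, \big(b_1(s,X^{\delta,\Lambda}_s,Y^\delta_s) - \bar b_1(s,X^{\delta,\Lambda}_s)\big) ds\Big|.
\]
Gronwall then reduces the claim to showing that the fluctuation term is at most $C\delta + \tfrac12\sup_{s\le t}\Ee|Z_s|^2$, with $C$ independent of $\Lambda$.

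For the fluctuation term we use the Poisson equation. Let $\Phi(t,x,y)$ solve $\mathcal L_Y \Phi(t,x,\cdot) = -(b_1(t,x,\cdot)-\bar b_1(t,x))$ with $\int\Phi\,d\bar\mu=0$. Assumption~\ref{hyp:Dynamics_Poisson_X_Y--c}\ref{hyp:RocknerAs--c2}–\ref{hyp:RocknerR1--2--c2} (ellipticity, dissipativity at infinity, $C^1_b$ coefficients) and the $C^{1/2,1}$ regularity of $b_1$ in $(t,x)$ give, via \cite{rockner2021averaging}, that $\Phi$, $\partial_x\Phi$, $\partial_{xx}\Phi$, $\partial_t\Phi$ exist and grow at most polynomially in $y$. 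Two structural facts matter here. First, $\Phi$ does not depend on the control, since the fast dynamics do not depend on $x$ or $a$. Second, $\Phi$ is defined through the full generator $\mathcal L_Y$, which includes both $\Sigma$ and $\hat\Sigma$, so common noise needs no special treatment here: this estimate is unconditional, unlike Step~2 of Theorem~\ref{thm:Existence 2S epsilon MFG eq}. Set $\psi(t,x,z,y) := z\,\Phi(t,x,y)$ and apply Itô's formula to $\psi(t,X^{\delta,\Lambda}_t,Z_t,Y^\delta_t)$. The $\frac1\delta\mathcal L_Y$ part reproduces $-\frac1\delta Z_t\,(b_1-\bar b_1)$. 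Multiplying by $\delta$,
\[
\Ee\int_0^t Z_s(b_1-\bar b_1)\,ds = \delta\Big(\Ee[\psi(0,\cdot)] - \Ee[\psi(t,\cdot)] + \Ee\int_0^t (\partial_s + \mathcal L^{\Lambda}_{x,z})\psi\,ds\Big) + \sqrt\delta\,\Ee\int_0^t \mathcal C_s\,ds ,
\]
plus martingale terms with zero expectation. Here $\mathcal L^{\Lambda}_{x,z}$ is the generator of the slow pair $(X^{\delta,\Lambda},Z)$; its drift contains $\int_A b_2\,d\Lambda_t$, which is bounded uniformly in $\Lambda$. The term $\mathcal C_s$ collects mixed quadratic covariations between $dW^{\mathrm y}, dW^{\mathrm c}$ and $dW^{\mathrm x}$. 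These vanish, because the noises are independent and $\sigma$ does not depend on $y$; this is exactly the structural assumption discussed in the introduction.

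It remains to bound the $O(\delta)$ terms. This uses uniform moment bounds $\sup_{\delta,\Lambda}\sup_t \Ee\big(|X^{\delta,\Lambda}_t|^2 + |Y^\delta_t|^{p}\big) < \infty$. The $Y$-bounds come from the dissipativity condition \ref{hyp:RocknerAb--c2} and Proposition~\ref{pro:E invariant m estimate}; the $X$-bounds from $|b_1|\le C(1+|y|)$ and boundedness of $b_2$. The boundary terms $\delta\,\Ee|Z_t\Phi|$ are at most $\tfrac14\Ee|Z_t|^2 + C\delta^2\Ee|\Phi|^2$. The drift terms give $\delta\,\Ee\int (|Z|+1)\,\mathrm{poly}(|Y|)$, which is at most $C\delta$ by Young's inequality together with the moment bounds. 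The main obstacle is the regularity and polynomial growth of $\Phi$, and hence the moment exponents: under the stated $m^Y_0\in\mathcal P_2$ we may only have second moments of $Y_0$. We would handle this by first running the argument from a time $t_0=\delta|\log\delta|$, after which $Y^\delta$ has all moments by exponential ergodicity, and bounding the initial layer $[0,t_0]$ directly by $C t_0$ using $\Ee|b_1|\le C(1+\Ee|Y|)$. If that only yields $\delta|\log\delta|$, we would instead use that $b_1$ is Lipschitz in $y$ to get $\Phi$ of linear growth, for which second moments suffice. Gronwall then gives $\sup_t\Ee|Z_t|^2\le C\delta$ uniformly in $\Lambda$.
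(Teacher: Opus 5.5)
Your route differs from the paper's and is sound in outline, but as written two steps fail: the regularity of the corrector, and the moment fixes.

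\textbf{Comparison.} The paper first applies a Zvonkin transformation $\Gamma=x+v$, with $\partial_t v+\hat{\mathcal L}v+\bar b_1=0$. It then bounds the pure fluctuation integral $\Ee|\int_0^t(\partial_x\Gamma\,\tilde b)(s,X^\delta_s,Y^\delta_s)\,ds|^2\le C\delta$ by quoting Lemma 4.1 of \cite{rockner2021averaging}, and closes with Gr\"onwall on short horizons. You skip the transformation and attack the cross term $\Ee\int_0^t Z_s\tilde b\,ds$ with the corrector $z\Phi$. Skipping Zvonkin is legitimate here: $\bar b_1$ is Lipschitz by \eqref{eq:Lipschitz eff os}, so $\bar b_1(X^\delta)-\bar b_1(X^0)$ is already bounded by $C|Z|$. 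Your remarks that $\Phi$ does not depend on $\Lambda$ and that the $Y$--$(X,Z)$ covariations vanish are also correct.

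\textbf{Regularity of the corrector.} Since $\mathcal L_Y$ acts only in $y$, $\Phi(t,x,\cdot)=\mathcal L_Y^{-1}(-\tilde b(t,x,\cdot))$ has exactly the $(t,x)$-regularity of $b_1$. That is only $\tfrac12$-H\"older in $t$ and at best $C^1$ in $x$. For example, $b_1=h(t)g(x)\beta(y)$ with $h$ only $\tfrac12$-H\"older and $g\in C^1_b$ not $C^2$ gives $\Phi=h(t)g(x)\Phi_\beta(y)$. So $\partial_t\Phi$ and $\partial_{xx}\Phi$ need not exist, and It\^o's formula for $Z_t\Phi(t,X^\delta_t,Y^\delta_t)$ is unjustified. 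Nothing in \cite{rockner2021averaging} supplies these derivatives; the paper sidesteps the issue by citing their Lemma 4.1 for integrands that are only $C^{1/2,1}$.

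Within your framework you can repair this by mollification. Mollify $\tilde b$ in $(t,x)$ at scale $\eta$ in $x$ and $\eta^2$ in $t$. This keeps $\bar\mu$-centering and costs $O(\eta)$. The new corrector has $\partial_t\Phi_\eta,\partial_{xx}\Phi_\eta=O(\eta^{-1})$, while $\Phi_\eta,\partial_x\Phi_\eta=O(1)$, up to polynomial factors in $y$. Those terms carry a factor $\delta Z$, so Young's inequality bounds the cross term by $\int_0^t\Ee|Z_s|^2ds+C(\eta^2+\delta^2\eta^{-2})$. Choosing $\eta=\sqrt\delta$ gives $C\delta$.

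\textbf{Moments.} Neither of your fixes works.
\begin{itemize}
\item Initial layer: $B\in C^1_b$ is bounded, so in fast time $|\log\delta|$ the fast process moves only $O(|\log\delta|)$. Heavy tails of $Y_0$ therefore survive to $t_0=\delta|\log\delta|$. Ergodicity gives no moment bounds that are uniform in the initial law.
\item Linear growth: for bounded $B$ the corrector of a linearly growing datum grows at least quadratically. For instance, $B=-c$ with datum $y$ gives $\Phi\sim y^2/(2c)$.
\end{itemize}
In fact two moments of $m_0^Y$ cannot give the rate at all. Take $B=-c\tanh$, constant $\Sigma,\hat\Sigma$, $b_1(t,x,y)=y$, $b_2=0$, $\sigma=1$. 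Then $Z_t=\int_0^tY^\delta_s\,ds$. A starting point $1\ll y_0\le ct/\delta$ contributes about $\delta y_0^2/(2c)$ to $\Ee[Z_t\mid Y_0=y_0]$ before relaxing. Hence a tail $|y|^{-3-\epsilon}$ of $m_0^Y$ with $\epsilon<1$ makes $\Ee|Z_t|^2$ at least of order $\delta^{\epsilon}\gg\delta$. A stronger moment assumption on $m_0^Y$ has to be imposed. The paper glosses over this by quoting \cite{rockner2021averaging}, and Lemma \ref{lem:supL2_H} likewise asserts all moments of $Y^\delta$. With such an assumption, your Young/moment bookkeeping goes through.
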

Using the above result, we obtain the following construction result of a $\varepsilon$-MFG equilibrium with relaxed control.
\begin{theorem}[Construction of a two-scale $\varepsilon$-MFG equilibrium  with \textit{relaxed control}]
\label{relaxed1}
Suppose that Assumptions \ref{hyp:Dynamics_Poisson_X_Y--c} and \ref{hyp:f and g--c} hold. Let $(\Lambda^\star, m^\star)$ be an effective MFG equilibrium with relaxed control. Define 
\begin{align*}
& m^{\delta , \star}_t(B \times C) := \Ee \left[ \textbf{1}_{B}(X^{\delta,\Lambda^{\star}}_t,Y^{\delta}_t)\Lambda^{\star}_t(C)\bigg| \mathcal{F}^{\text{c}}_t \right], \quad B \in \mathcal{B}(\mathbb R\times \mathbb R),\, C \in \mathcal{B}(A),\,\, \quad t \in [0,T],
\end{align*}
where $(X^{\delta,\Lambda^{\star}}, Y^\delta)$ is the strong solution of \eqref{eq:relaxed}. Then, $(\Lambda^\star, m^{\delta,\star})$ is a two--scale $\varepsilon$--MFG equilibrium with relaxed control for $\varepsilon = C \delta^{1/6}$, for some $C>0$ independent of $\delta$. 
\end{theorem}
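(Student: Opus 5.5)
The proof follows the template of Theorem \ref{thm:Existence 2S epsilon MFG eq} (Poisson equation regime), with stopping times replaced by relaxed controls. Fix an arbitrary deviation $\Lambda\in\mathcal A^{\mathcal R}(\mathbb F^{\bm W})$ and decompose
\begin{align*}
J^\delta(\Lambda,m^{\delta,\star})-J^\delta(\Lambda^\star,m^{\delta,\star})
&= \big[J^\delta(\Lambda,m^{\delta,\star})-J^0(\Lambda,m^\star)\big]
+\big[J^0(\Lambda,m^\star)-J^0(\Lambda^\star,m^\star)\big]\\
&\quad+\big[J^0(\Lambda^\star,m^\star)-J^\delta(\Lambda^\star,m^{\delta,\star})\big].
\end{align*}

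\textbf{Middle term.} This term is handled exactly as in Proposition \ref{pro:sup equality}, via independent enlargement of filtrations. In the effective problem, $X^{0,\Lambda}$ is driven only by $W^{\rm x}$, $X_0$ and $\Lambda$. The extra randomness $(Y_0,W^{\rm y},W^{\rm c})$ is independent of $\mathbb F^{\rm x}$, so it cannot improve the effective value. Concretely, I would condition on $(Y_0,W^{\rm y},W^{\rm c})$: for a.e.\ frozen path of this noise, the control $\Lambda$ becomes an $\mathbb F^{\rm x}$-progressive relaxed control, whose value is at most $J^0(\Lambda^\star,m^\star)$. Integrating over the frozen path gives $J^0(\Lambda,m^\star)\le J^0(\Lambda^\star,m^\star)$.

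\textbf{Outer terms.} It suffices to show $|J^\delta(\Lambda,m^{\delta,\star})-J^0(\Lambda,m^\star)|\le C\delta^{1/6}$ with $C$ uniform in $\Lambda$, since $\Lambda^\star$ is a special case. The basic tool is Proposition \ref{pro:Strong rate diffusion--c}, which gives $\sup_t\mathbb E|X^{\delta,\Lambda}_t-X^{0,\Lambda}_t|\le C\sqrt\delta$ uniformly in $\Lambda$, both for the deviating player and for the population driven by $\Lambda^\star$. Expand $f$ according to Assumption \ref{hyp:f and g--c}.
\begin{itemize}
\item The $F$-term and the terminal $g$-term are Lipschitz in $x$ and in the aggregated quantities. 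The aggregate difference satisfies $|\langle\hat f,m^{\delta,\star(\rm x)}_t\rangle-\langle \hat f,m^{\star(\rm x)}_t\rangle|\le C\,\mathbb E[|X^{\delta,\Lambda^\star}_t-X^{0,\Lambda^\star}_t|\,|\mathcal F^c_t]$. This holds because $\mathbb E[\hat f(X^{0,\Lambda^\star}_t)\,|\,\mathcal F^c_t]=\langle\hat f,m^{\star(\rm x)}_t\rangle$, using that $\Lambda^\star$ is $\mathbb F^{\rm x}$-adapted and hence independent of the common noise. Taking expectations, these terms are $O(\sqrt\delta)$.
\item The $F^1F^2$ term is treated as $\tilde A^1,\tilde A^2,\tilde A^3$ in Theorem \ref{thm:Existence 2S epsilon MFG eq}. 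The oscillating part $F^1(t,X^\delta_t,Y^\delta_t)-\bar F^1(t,X^\delta_t)$, multiplied by a smooth function of $(t,X^0_t)$ and integrated against $\Lambda$, is handled with Lemma \ref{lemmaM}, the Poisson equation estimate. This must be checked to extend when the slow drift contains the extra bounded term $\int b_2\,d\Lambda_t$; the extension only adds bounded drift contributions in the Itô expansion of the Poisson corrector. The result is $O(\sqrt\delta)$.
\item The $F^3\langle\hat f_2,m^{\delta,\star}_t\rangle$ term splits as $A^1,A^2,A^3$. The parts $A^1$ and $A^2$ are $O(\sqrt\delta)$ by the estimates above. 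For $A^3$, discretize time with step $\Delta$ and freeze $F^3(k\Delta,X^{0,\Lambda}_{k\Delta})$, at cost $C\Delta^{1/2}$ by the Hölder and Lipschitz regularity of $F^3$. This uses $\mathbb E|X^{0,\Lambda}_t-X^{0,\Lambda}_s|\le C|t-s|^{1/2}$, which holds uniformly in $\Lambda$ since $b_2$ is bounded. Then condition on $\mathcal F^{\bm W}_{k\Delta}$ and apply Cauchy–Schwarz and Lemma \ref{lem:supL2_H} to the integrals $\int_{k\Delta}^{(k+1)\Delta}(\hat f_2(X^{\delta,\Lambda^\star}_t,Y^\delta_t)-\bar{\hat f}_2(X^{\delta,\Lambda^\star}_t))\,dt$, giving a bound $C\Delta^{1/2}+C\sqrt\delta/\Delta$. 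Choosing $\Delta=\delta^{1/3}$ yields $C\delta^{1/6}$.
\end{itemize}
Here there is no stopping indicator, so the freezing argument is simpler. Combining the pieces gives $\varepsilon=C\delta^{1/6}$.

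\textbf{Main obstacle.} The main difficulty is the uniformity in the deviating relaxed control $\Lambda$, which is $\mathbb F^{\bm W}$-adapted and may therefore depend on the fast and common noise. Two things need checking. First, Lemmas \ref{lemmaM} and \ref{lem:supL2_H} were stated for the uncontrolled slow process. I would verify that their proofs use only the drift bounds and the ellipticity of $\sigma$, so they extend to $X^{\delta,\Lambda}$ with constants independent of $\Lambda$. Second, $\Lambda$ multiplies the integrand inside the $A^3$-type terms. I would handle this by freezing, on each interval, $\int_A F(\cdot,a,\cdot)\Lambda_t(da)$ together with $F^3$. This is not needed for the $F^3$ term itself, since there $\Lambda$ enters only through $X^{\delta,\Lambda}$; the difficulty arises only if $\Lambda$ appears multiplicatively with a fast-scale oscillation.
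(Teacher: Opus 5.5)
Your treatment of the two outer terms is essentially the paper's. You use the same splitting into $\tilde A$, $A$, $\breve A$, $B$, Proposition \ref{pro:Strong rate diffusion--c} for the Lipschitz pieces, Lemma \ref{lemmaM} for the oscillating $F^1F^2$ part, and the $\Delta=\delta^{1/3}$ discretization with Lemma \ref{lem:supL2_H} for $A^3$. Both of your ``obstacles'' are already settled there. Lemma \ref{lem:supL2_H} and part~2 of Lemma \ref{lemmaM} are stated for the relaxed controlled dynamics \eqref{eq:relaxed}. Also, the $F$-part of the cost does not depend on $y$, so $\Lambda$ never multiplies a fast oscillation.

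The genuine difference is the middle term. The paper does not argue by independence there; it goes through occupation measures. It disintegrates $m^\Lambda_t(dx,da)=\hat\Lambda(t,x)(da)\,m^{\Lambda,(\mathrm x)}_t(dx)$. It then uses Theorem C.6 of \cite{4LPApproach} to produce $\hat X$, driven by the Markovian relaxed feedback $\hat\Lambda(t,\hat X_t)$, with the same occupation and terminal measures. It concludes from linearity of the effective payoff in $(m^\Lambda,\mu^\Lambda)$ (for fixed deterministic $m^\star$) and from $\hat\Lambda(\cdot,\hat X_\cdot)\in\mathcal A^{\mathcal R}(\mathbb F^{\mathrm x})$. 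That membership tacitly needs strong well-posedness of an SDE with merely measurable drift (Zvonkin--Veretennikov, hence nondegenerate $\sigma$), as in the proof of Theorem \ref{existLP--c}.

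Your freezing/Fubini argument is correct and more elementary. It uses only the independence of $(X_0,W^{\mathrm x})$ from $(Y_0,W^{\mathrm y},W^{\mathrm c})$, the determinism of $m^\star$, and Lipschitz coefficients, so it would survive degenerate $\sigma$. The price is measure-theoretic bookkeeping that you should make explicit:
\begin{itemize}
\item choose a version $\Lambda_t=\lambda\big(t,(X_0,W^{\mathrm x}),(Y_0,W^{\mathrm y},W^{\mathrm c})\big)$ with $\lambda$ progressive on the canonical product space (a $dt\otimes d\mathbb P$-modification suffices);
\item identify $X^{0,\Lambda}$ with the parametrized family of strong solutions, e.g.\ via Picard iteration, using that the stochastic integral against $W^{\mathrm x}$ commutes with freezing an independent parameter.
\end{itemize}
In exchange, the paper's route gives the stronger fact that any $\mathbb F^{\bm W}$-adapted deviation can be replaced by a Markovian relaxed feedback without changing the effective payoff. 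This fits the LP existence theory of Section \ref{Sec:Control existence effective}.

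Finally, your argument is not literally that of Proposition \ref{pro:sup equality}, which uses Snell envelopes and immersion, though it rests on the same independence.
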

\begin{proof}
Let $\Lambda \in \mathcal{A}^{\mathcal{R}}(\mathbb{F}^{\bm{W}})$. We have
\begin{align}
\label{ineq1-relax}
& J^\delta(\Lambda,m^{\delta,\star})-J^\delta(\Lambda^\star,m^{\delta,\star}) \notag
\\
& \leq \left|J^\delta(\Lambda,m^{\delta,\star})-J^{0}(\Lambda,m^{\star})\right|  + J^{0}(\Lambda,m^{\star})-J^{0}(\Lambda^\star,m^{\star}) + \left|J^{0}(\Lambda^\star,m^{\star})-J^{\delta}(\Lambda^\star,m^{\delta,\star})\right|.
\end{align}
We write
\begin{align}
\label{eq:Aux Prop 2 1--c}
| J^\delta(\Lambda,m^{\delta,\star}) - J^{0}(\Lambda,m^{\star})| 
& \!=\! \left| \Ee \left[ \int_0^T ( \tilde{A}_t + A_t + \breve{A}_t )dt + B \right] \right|,
\end{align}
where
\begin{align*}
\tilde{A}_t & = F^1(t,X^{\delta,\Lambda}_t,Y^{\delta}_t) F^2(t,X^{\delta,\Lambda}_t, \langle \hat{f}_1, m^{\delta ,\star(\text{x},\text{y})}_t \rangle ) - \bar{F}^1(t,X^{0,\Lambda}_t)F^2(t,X^{0,\Lambda}_t, \langle \hat{f}_1, m_t^{\star(\text{x})} \rangle ),
\\
A_t & = F^3(t,X^{\delta,\Lambda}_t)\langle \hat{f}_2, m^{\delta ,\star(\text{x,y})}_t \rangle  - F^3(t,X^{0,\Lambda}_t)\langle \bar{\hat{f}}_2, m_t^{\star(\text{x})} \rangle,
\\
\breve{A}_t & = \int_{A} \left( F(t,X^{\delta,\Lambda}_t,a, \langle \hat{f}, m^{\delta ,\star(\text{x})}_t \rangle ) - F(t,X^{0,\Lambda}_t,a,\langle \hat{f}, m^{\star(\text{x})}_t \rangle  ) \right) \Lambda_t(da),
\\
B & = G(X^{\delta,\Lambda}_T , \langle \hat{g}, m^{\delta ,\star(\text{x})}_T \rangle ) - G(X^{0,\Lambda}_T ,\langle \hat{g}, m^{\star(\text{x})}_T\rangle  ).
\end{align*}
Throughout the following calculations, the constant $C$ may vary from line to line but is independent of $\delta$. By Proposition \ref{pro:Strong rate diffusion--c}, we follow the same strategy as in the proof of Theorem \ref{thm:Existence 2S epsilon MFG eq} under Assumptions \ref{hyp:Dynamics_Poisson_X_Y} and \ref{hyp:Cost Poisson OS}. Many of the estimates carry over directly when the stopping time is set to $T$. We highlight below the points where the control plays a role.
\\
{\noindent\bf \textit{Step 1: Bound on $\tilde{A}_t$.}} We write
\begin{align}
\label{eq:tildeA expand_control}
\tilde{A}_t & = \tilde{A}^{1}_t  + \tilde{A}^{2}_t + \tilde{A}^3_t,
\end{align}
where
\begin{align*}
\tilde{A}^{1}_t & = F^2(t,X^{0,\Lambda}_t, \langle \hat{f}_1, m_t^{\star(\text{x})} \rangle ) \left( F^1(t,X^{\delta,\Lambda}_t,Y^{\delta}_t) - \bar{F}^1(t,X^{\delta,\Lambda}_t) \right) \\
\tilde{A}^{2}_t & = F^2(t,X^{0,\Lambda}_t, \langle \hat{f}_1, m_t^{\star(\text{x})} \rangle ) \left( \bar{F}^1(t,X^{\delta,\Lambda}_t) -\bar{F}^1(t,X^{0,\Lambda}_t)\right) 
\\
\tilde{A}^{3}_t & = F^1(t,X^{\delta,\Lambda}_t,Y^{\delta}_t) \left( F^2(t,X^{\delta,\Lambda}_t, \langle \hat{f}_1, m^{\delta ,\star(\text{x})}_t \rangle ) - F^2(t,X^{0,\Lambda}_t, \langle \hat{f}_1, m_t^{\star(\text{x})} \rangle ) \right).
\end{align*}
The three terms are bounded in the same way as in the proof of Theorem \ref{thm:Existence 2S epsilon MFG eq} using the second part of Lemma \ref{lemmaM}, leading to 
\begin{align}
\label{eq:tildeA bound final_control}
\left| \Ee \left[ \int_0^T \tilde{A}_t dt \right] \right| & \leq C \sqrt{\delta}.
\end{align}
{\bf \textit{Step 2: Bound on $A_t$.}} We write
\begin{align}
\label{eq:A expand_control}
A_t & = A^{1}_t + A^{2}_t +  A^{3}_t,
\end{align}
where
\begin{align*}
A^{1}_t & = \langle \hat{f}_2, m^{\delta,\star(\text{x},\text{y})}_t \rangle ( F^3(t,X^{\delta, \Lambda}_t) - F^3(t,X^{0, \Lambda}_t) ),
\\
A^{2}_t & = F^3(t,X^{0,\Lambda}_t)  \left( \langle \bar{\hat{f}}_2, m^{\delta,\star(\text{x})}_t \rangle - \langle \bar{\hat{f}}_2, m^{\star(\text{x})}_t \rangle \right),\\
A^{3}_t & = F^3(t,X^{0,\Lambda}_t)  \left( \langle \hat{f}_2, m^{\delta,\star(\text{x},\text{y})}_t \rangle - \langle \bar{\hat{f}}_2, m^{\delta,\star(\text{x})}_t \rangle \right).
\end{align*}
Similarly to \eqref{eq:A1 bound 1} and \eqref{eq:A22 bound}, by Proposition \ref{pro:Strong rate diffusion--c}, we obtain 
\begin{align}
\label{eq:A1 bound 1_control}
&\left| \Ee \left[ \int_0^T (A^{1}_t + A^{2}_t) dt \right] \right| \leq C \sqrt{\delta}.
\end{align}
The bound on $A^3$ is obtained by the same discretization procedure as in the proof of Theorem \ref{thm:Existence 2S epsilon MFG eq}. In particular, using the regularity of the coefficients in \eqref{eq:dynamics effective--cR}, the boundedness of $b_2$, and the fact that $\Lambda_t$ takes values in $\mathcal P(A)$, there exists a constant $C>0$ independent of $\delta$ and $\Lambda$ such that
\[
\Ee [ | X^{0,\Lambda}_t - X^{0,\Lambda}_s |^2 ] \leq C |t-s|.
\]
Indeed, since $\Lambda_t \in \mathcal P(A)$, we have
\[
\left| \int_A b_2(x,a)\Lambda_t(da)\right| \leq \sup_{a \in A}|b_2(x,a)| \leq C,
\]
so the controlled drift in \eqref{eq:dynamics effective--cR} is uniformly bounded in $\Lambda$. Using the discretization procedure as above, we then get \begin{align}
\label{eq:A3 bound final_control}
\left| \Ee \left[ \int_0^T A^{3}_t dt \right] \right|  \leq C \delta^{\tfrac{1}{6}}.
\end{align}
Combining this estimate with \eqref{eq:A1 bound 1_control}, we conclude:
\begin{align}
\label{eq:A bound final_control}
\left| \Ee \left[ \int_0^T A_t dt \right] \right| \leq C \delta^{\tfrac{1}{6}}.
\end{align}

{\bf \noindent\textit{Step 3: Bound on \eqref{eq:Aux Prop 2 1--c}}.} By Lipschitz continuity of $F$ and $\hat{f}$, and Proposition \ref{pro:Strong rate diffusion--c}, we have
\begin{align}
\Ee \left[ \left|\breve{A}_t\right| \right] & \leq C \Ee \left[ \left| X^{\delta,\Lambda}_t - X^{0,\Lambda}_t \right| + \Ee \left[ \left| X^{\delta,\Lambda^\star}_t - X^{0,\Lambda^\star}_t \right| \bigg| \mathcal{F}^{\text{c}}_t \right] \right] \leq C \sqrt{\delta}.
\end{align}
A similar bound follows for $B$, and the previous steps provide the bound
\begin{align}
\label{eq:Aux Prop 2 1--c_bound}
\left| J^\delta(\Lambda,m^{\delta,\star}) - J^{0}(\Lambda,m^{\star})\right| & \leq C \delta^{1/6}.
\end{align}
By using similar computations as for \eqref{eq:Aux Prop 2 1--c_bound}, we derive that
\begin{align}
\label{eq:Strong Third term bound_control}
\left|J^{0}(\Lambda^\star,m^{\star})-J^{\delta}(\Lambda^\star,m^{\delta,\star})\right| \leq C \delta^{1/6}.
\end{align}
For a given $\Lambda \in \mathcal{A}^{\mathcal{R}}(\mathbb{F}^\textbf{W})$, let
\[
m_t^\Lambda(B\times C):=\Ee\left[\textbf{1}_B(X_t^{0,\Lambda})\Lambda_t(C)\right], \qquad
\mu^\Lambda(B):=\Ee\left[\textbf{1}_B(X_T^{0,\Lambda})\right].
\]
By the disintegration theorem (\cite{carmona2018probabilistic}, Theorem 1.1), there exists a measurable kernel $\hat{\Lambda}:[0,T]\times \mathbb R \to \mathcal P(A)$ such that
\[
m_t^\Lambda(dx,da)=\hat{\Lambda}(t,x)(da)\,m_t^{\Lambda,(\text{x})}(dx).
\]
Moreover, by Theorem C.6 in \cite{4LPApproach}, there exists a weak solution $\hat X$ of the effective dynamics driven by the feedback relaxed control $\hat{\Lambda}(t,\hat X_t)$ such that
\[
\Ee\left[\textbf{1}_B(\hat X_t)\hat{\Lambda}(t,\hat X_t)(C)\right]=m_t^\Lambda(B\times C), \qquad
\Ee\left[\textbf{1}_B(\hat X_T)\right]=\mu^\Lambda(B).
\]
Since the effective payoff is linear in the occupation measure and the terminal law, this implies
\begin{align}\label{ineq2-relax}
J^{0}(\Lambda,m^\star) = J^{0}(\hat{\Lambda}(t,\hat X_t), m^\star).
\end{align}
Writing
\begin{align}
J^0(\Lambda,m^\star)-J^{0}(\Lambda^\star,m^\star) = J^0(\Lambda,m^\star)-J^{0}(\hat{\Lambda}(t,\hat X_t), m^\star)+J^{0}(\hat{\Lambda}(t,\hat X_t), m^\star)-J^{0}(\Lambda^\star,m^\star),
\end{align}
from the identity \eqref{ineq2-relax} and since $\hat{\Lambda}(\cdot,\hat X_\cdot) \in \mathcal{A}^{\mathcal{R}}(\mathbb{F}^{\text{x}})$ and $(\Lambda^\star,m^\star)$ is a MFG equilibrium for the effective problem, we obtain that 
\begin{align}
\label{eq:Strong Second term bound_control}
J^0(\Lambda,m^\star)-J^{0}(\Lambda^\star,m^\star) \leq 0.
\end{align}
By combining \eqref{ineq1-relax}, \eqref{eq:Aux Prop 2 1--c_bound}, \eqref{eq:Strong Third term bound_control}, and \eqref{eq:Strong Second term bound_control}, we obtain that
\begin{align*}
J^\delta(\Lambda, m^{\delta,\star})-J^\delta(\Lambda^\star, m^{\delta,\star}) \leq C \delta^{1/6}.
\end{align*}
Because $\Lambda \in \mathcal{A}^{\mathcal{R}}(\mathbb{F}^{\bm{W}})$ is arbitrary, we conclude that $(\Lambda^\star, m^{\delta,\star})$ is a two--scale $\varepsilon$--MFG equilibrium with relaxed control for $\varepsilon = C \delta^{1/6}$, where $C>0$ is independent of $\delta$.

\end{proof}

\begin{remark}
\label{rem:orders_control}
Similar to Remarks \ref{rem:Poisson orders} and \ref{rem:Common noise and orders}, in Theorem \ref{relaxed1}, the dependence of $\hat{f}_2$ on the fast scale variable $y$ lowers the approximation order to $1/6$, since in {\bf \textit{Step 2}} the discretization of the term $A^{3}_t$ forces a step size whose sharpest bound yields this rate. Without such dependence, the higher order $1/2$ from {\bf \textit{Step 1}} would be recovered. This loss of order is tied to the presence of common noise: it introduces conditional expectations that require discretization. In contrast, without common noise, the corresponding terms could be estimated directly, yielding the sharper convergence rate $1/2$.
\end{remark}
To ensure the existence of equilibria with strict controls, we require the admissible set of coefficients and costs to satisfy the following convexity assumption.
\begin{assumption}
\label{hyp:Convex}
For all $(t,x,m)$, 
\[
K(t,x):= \left\{ (\bar{b}_1(t,x)+b_2(x,a),\sigma(t,x),z):\, a\in A,\, z\leq \bar{f}(t,x,a,m_t) \right\} \subset \Rr \times \Rr_+ \times \Rr
\]
is convex.
\end{assumption}
Under the above Assumption, we will show in the next Section that we have existence of an \textit{effective MFG equilibrium} with \textit{strict control}. In this case, Theorem \ref{relaxed1} can be written as follows. 
\begin{theorem}[Construction of a two-scale $\varepsilon$-MFG equilibrium  with \textit{strict control}]
\label{pro:epsilon Nash strong--c}
Suppose that Assumptions \ref{hyp:Dynamics_Poisson_X_Y--c}, \ref{hyp:f and g--c}, and \ref{hyp:Convex} hold. Let $(m^\star, \alpha^\star)$ be an effective MFG equilibrium. Define
\begin{align*}
& m^{\delta , \star}_t(B) := \Ee \left[ \textbf{1}_{B}(X^{\delta,\alpha^{\star}}_t, Y^{\delta}_t, \alpha^\star_t)\bigg| \mathcal{F}^{\text{c}}_t \right], \quad B \in \mathcal{B}(\mathbb R \times \mathbb R \times A),
\end{align*}
where $(X^{\delta,\alpha^{\star}}, Y^\delta)$ is the strong solution of \eqref{eq:dynamics 2S--c}. Then, $(\alpha^\star, m^{\delta,\star})$ is a two--scale $\varepsilon$--MFG equilibrium  for $\varepsilon = C \delta^{1/6}$, for some $C>0$ independent of $\delta$. 
\end{theorem}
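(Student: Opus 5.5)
The plan is to reduce the statement to Theorem \ref{relaxed1} by embedding strict controls into relaxed ones. Take the effective strict equilibrium $(\alpha^\star,m^\star)$ and set $\Lambda^\star_t:=\delta_{\alpha^\star_t}$. This lies in $\mathcal A^{\mathcal R}(\mathbb F^{\text{x}})$, and by construction $X^{0,\Lambda^\star}=X^{0,\alpha^\star}$ and $X^{\delta,\Lambda^\star}=X^{\delta,\alpha^\star}$. The consistency condition of Definition \ref{def:strict} then gives exactly the consistency condition of Definition \ref{defeffective}, since $\Ee[\mathbf 1_B(X^{0,\alpha^\star}_t)\delta_{\alpha^\star_t}(C)]=m^\star_t(B\times C)$. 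In the same way, the flow $m^{\delta,\star}$ in the statement coincides with the one defined in Theorem \ref{relaxed1} for $\Lambda^\star$.

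The key step is to show that $(\Lambda^\star,m^\star)$ is an effective equilibrium with \emph{relaxed} control, i.e.\ that $J^0(\Lambda,m^\star)\le J^0(\alpha^\star,m^\star)$ for every $\Lambda\in\mathcal A^{\mathcal R}(\mathbb F^{\text{x}})$. Here Assumption \ref{hyp:Convex} is used, through a chattering/Filippov-type argument, which I expect to be the main obstacle. The steps are as follows.
\begin{enumerate}
\item Fix $m^\star$ and $\Lambda$. As in the proof of Theorem \ref{relaxed1}, pass to the occupation measure $m^\Lambda$ and disintegrate it as $\hat\Lambda(t,x)(da)\,m^{\Lambda,(\text{x})}_t(dx)$.
\item Convexity of $K(t,x)$ gives, for each $(t,x)$, a point $\hat a(t,x)\in A$ with $\bar b_1+\int b_2(x,a)\hat\Lambda(t,x)(da)=\bar b_1+b_2(x,\hat a(t,x))$ and $\bar f(t,x,\hat a(t,x),m^\star_t)\ge\int\bar f(t,x,a,m^\star_t)\hat\Lambda(t,x)(da)$. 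This point can be chosen measurably by a measurable selection theorem (e.g.\ Filippov's lemma), using continuity of $b_2,\bar f$ in $a$ and compactness of $A$.
\item The pair $(m^{\Lambda,(\text{x})}\otimes\delta_{\hat a},\,\mu^\Lambda)$ satisfies the same linear Fokker--Planck constraint as $m^\Lambda$, because the drift is unchanged. By the superposition result (Theorem C.6 in \cite{4LPApproach}), or by strong existence for the Markov feedback $\hat a(t,x)$ under uniform ellipticity of $\sigma$, it is realized by a strict feedback control $\hat\alpha_t=\hat a(t,\hat X_t)$, with a payoff at least $J^0(\Lambda,m^\star)$.
\item This control may live on another space. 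To transfer it back, either use pathwise uniqueness (Veretennikov-type results for bounded measurable drift with nondegenerate Lipschitz $\sigma$) to realize $\hat\alpha$ in $\mathcal A(\mathbb F^{\text{x}})$, or note that the effective value depends only on the occupation measures.
\item The equilibrium property of $\alpha^\star$ then yields $J^0(\hat\alpha,m^\star)\le J^0(\alpha^\star,m^\star)$.
\end{enumerate}

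Once $(\Lambda^\star,m^\star)$ is known to be an effective relaxed equilibrium, Theorem \ref{relaxed1} shows that $(\Lambda^\star,m^{\delta,\star})$ is a two-scale $C\delta^{1/6}$-equilibrium with relaxed controls. The comparison set $\mathcal A(\mathbb F^{\bm W})$ embeds into $\mathcal A^{\mathcal R}(\mathbb F^{\bm W})$ via $\alpha'\mapsto\delta_{\alpha'}$ with $J^\delta(\alpha',m)=J^\delta(\delta_{\alpha'},m)$. Hence $J^\delta(\alpha^\star,m^{\delta,\star})\ge J^\delta(\alpha',m^{\delta,\star})-C\delta^{1/6}$ for every $\alpha'\in\mathcal A(\mathbb F^{\bm W})$, and $m^{\delta,\star}$ satisfies Definition \ref{def:Strong Nash2s--strict c}(ii). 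As a fallback, one can bypass the reduction and rerun the estimates of Theorem \ref{relaxed1} directly with $\Lambda=\delta_{\alpha'}$: only step 3 of that proof, the comparison $J^0(\Lambda,m^\star)\le J^0(\Lambda^\star,m^\star)$, needs the convexification argument above.
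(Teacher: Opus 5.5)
Your proposal is correct and follows the paper's own route. The paper gives no separate proof: it presents this result as Theorem \ref{relaxed1} specialized to $\Lambda^\star_t=\delta_{\alpha^\star_t}$, with each strict deviation $\alpha'\in\mathcal A(\mathbb F^{\bm W})$ embedded as $\delta_{\alpha'}$. Your convexification step supplies the justification the paper leaves implicit, namely that a strict effective equilibrium is also optimal against relaxed controls. That step uses Assumption \ref{hyp:Convex} with a measurable selection, and then realizes the strict feedback on the original space through pathwise uniqueness; of the two options in your step 4, this is the one that actually closes the argument, since the equilibrium inequality for $\alpha^\star$ needs a control in $\mathcal A(\mathbb F^{\text{x}})$. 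The relaxed optimality is automatic for the strict equilibrium produced from an LP equilibrium in Theorem \ref{existLP--c}, which is presumably why the paper omits it.
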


\subsection{Existence of an equilibrium for the  \textit{effective} MFG problem}
\label{Sec:Control existence effective}

In this section, we prove that there exists an equilibrium for the \textit{effective MFG problem}. This result is based on the the existence result of a MFG equilibrium in the linear programming (LP) sense developed in \cite{4LPApproach}, together with a probabilistic representation of the occupation measures involved in the LP formulation.

\begin{proposition}\label{admissibility} Let $(\Omega, \mathcal{F},\mathbb{F}, \mathbb{P})$ be a filtered probability space,  $\nu$ an $\mathbb{F}$-progressively measurable process with values in $\mathcal{P}(A)$, $W$ an $\mathbb{F}$-Brownian motion and $X$ an $\mathbb{F}$-adapted process such that
\begin{align}
dX_t=\bar b_1(t,X_t)dt + \int_A b_2(X_t,a)\nu_t(da)dt+\sigma(t,X_t)dW_t,\,\, 0 \leq t \leq T, \,\, X_0 \sim m_0^X.
\end{align}
Define the associated measures
\begin{align}\label{mesmu}
\mu:=\mathbb{P} \circ X_T^{-1}
\end{align}
and
\begin{align}\label{mesm}
m_t(B \times C):= \mathbb{E}^{\mathbb{P}}[\textbf{1}_{B}(X_t)\nu_t(C)],\,\, B \in \mathcal{B}(\mathbb R),\,\, C \in \mathcal{B}(A).
\end{align}
 Then $(m,\mu) \in \mathcal{R}$, where
$\mathcal{R}$ represents the set of pairs $(m,\mu)\in \mathcal{P}(\mathbb R)\times \mathcal V(\mathbb R\times A)$, such that for all $u\in C_b^{1, 2}([0, T]\times \mathbb R)$,
\begin{align*}
  \int_{\mathbb R} \! u(T, x)\mu(dx)&= \int_{\mathbb R} u(0, x)m_0^X(dx) + \int_0^T \int_{\mathbb R\times A} (\partial_t u + \bar{\mathcal{L}} u ) (t, x, a)m_t(dx, da)dt,\\
  \bar{\mathcal{L}} u(t,x,a) &= (\bar b_1(t,x) + b_2(x,a))\frac{\partial u}{\partial x}(t,x) + \frac{\sigma^2(t,x)}{2} \frac{\partial^2 u}{\partial x^2}(t,x). 
\end{align*}  
 \end{proposition}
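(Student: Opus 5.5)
The plan is to apply It\^o's formula to $u(t,X_t)$ for a fixed test function $u\in C_b^{1,2}([0,T]\times\mathbb R)$ and then take expectations, exactly as in the stopping case where $(m^\tau,\mu^\tau)\in\mathcal R$ was justified. Since $X$ is a continuous $\mathbb F$-semimartingale with drift $\bar b_1(t,X_t)+\int_A b_2(X_t,a)\nu_t(da)$ and diffusion coefficient $\sigma(t,X_t)$, It\^o's formula gives
\begin{align*}
u(T,X_T)&=u(0,X_0)+\int_0^T\Big(\partial_t u(t,X_t)+\big(\bar b_1(t,X_t)+\int_A b_2(X_t,a)\nu_t(da)\big)\partial_x u(t,X_t)\\
&\quad+\tfrac12\sigma^2(t,X_t)\partial_{xx}u(t,X_t)\Big)dt+\int_0^T\partial_x u(t,X_t)\sigma(t,X_t)\,dW_t .
\end{align*}
The key observation is that the drift term can be rewritten as $\int_A (\partial_t u+\bar{\mathcal L}u)(t,X_t,a)\,\nu_t(da)$. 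This works because $\nu_t$ is a probability measure on $A$ and the terms not depending on $a$ integrate trivially, so the relaxed drift is absorbed into the $a$-integral of the generator.

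Next I take expectations. The stochastic integral is a true martingale with zero mean: $\partial_x u$ is bounded and $\sigma$ is bounded by Assumption \ref{hyp:Dynamics_Poisson_X_Y--c}, so the integrand is bounded and square integrable. For the $dt$ integral, the integrand is bounded by $C(1+|\bar b_1(t,X_t)|)$, since $b_2$ is bounded and $u\in C^{1,2}_b$. Moreover $\bar b_1$ has linear growth, as shown after \eqref{eq:Lipschitz eff os}, and $\mathbb E\sup_t|X_t|<\infty$ by standard moment estimates, using $m_0^X\in\mathcal P_2$ and bounded $b_2$. This integrability lets me apply Fubini to exchange $\mathbb E$ and $\int_0^T$.

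With Fubini in hand, I identify the terms. By definition \eqref{mesmu}, $\mathbb E[u(T,X_T)]=\int u(T,x)\mu(dx)$, and $\mathbb E[u(0,X_0)]=\int u(0,x)m_0^X(dx)$. For each $t$, I need $\mathbb E[\int_A h(t,X_t,a)\nu_t(da)]=\int h(t,x,a)m_t(dx,da)$ for the measurable function $h=\partial_tu+\bar{\mathcal L}u$. By \eqref{mesm} this holds for indicators $h=\mathbf 1_{B\times C}$. A monotone class argument then extends it to bounded measurable $h$, and a truncation with dominated convergence handles $h$ of linear growth in $x$.

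Finally I check that the objects lie in the right spaces: $\mu\in\mathcal P(\mathbb R)$ is immediate, and $m$ is a measurable flow of probability measures, so $m\in\mathcal V(\mathbb R\times A)$ because $t\mapsto m_t(B\times C)$ is measurable by progressive measurability and Fubini. The main, if mild, obstacle is the integrability and monotone class extension for the unbounded drift term $\bar b_1$; everything else is routine.
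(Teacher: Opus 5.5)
Your proposal is correct and follows the same route as the paper, whose proof is a one-line appeal to It\^o's formula (citing Proposition~2.7 of the LP reference); your steps of taking expectations, applying Fubini, and extending from rectangles by a monotone class argument are exactly the details that citation covers. One minor simplification: under Assumption~\ref{hyp:Dynamics_Poisson_X_Y--c} you have $|b_1(t,x,y)|\le C(1+|y|)$ and $\bar\mu$ has a finite first moment, so $\bar b_1$ is in fact bounded and the moment estimate on $\sup_t|X_t|$ is not needed.
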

 \begin{proof}
The result follows by an application of Itô's formula (see Proposition 2.7 in \cite{4LPApproach}).
\end{proof}
Based on this observation, the following definition of  linear programming MFG equilibrium for the effective MFG problem is given in \cite{4LPApproach}.
\begin{definition}[Linear programming MFG equilibrium] 
\label{def:LP Nash effective--c}
We say that $(m^{\star},\mu^{\star}) \in \mathcal{R}$ is \textit{a linear programming MFG equilibrium} if
\begin{align}
&\int_{0}^T\int_{\mathbb R \times A} \bar{f}(t,x,a,(m^\star_t)^x)m^\star_t(dx,da)dt+\int_{\mathbb R} g(x,\mu^\star)\mu^\star(dx) \nonumber
\\ 
& \geq   \int_{0}^T\int_{\mathbb R \times A} \bar{f}(t,x,a,(m^\star_t)^x)m_t(dx,da)dt+ \int_{\mathbb R} 
 g(x,\mu^\star)\mu(dx), 
\end{align}
for all $(m,\mu) \in \mathcal{R}$.
\end{definition}





\begin{theorem}[Existence of an \textit{effective} MFG equilibrium] 
\label{existLP--c} 
Suppose that Assumptions \ref{hyp:Dynamics_Poisson_X_Y--c} and \ref{hyp:f and g--c}
hold.  Then there exists an \textit{effective} MFG equilibrium with \textit{relaxed control}. Moreover, if Assumption \ref{hyp:Convex} is satisfied, there exists an effective MFG equilibrium with strict control. 
\end{theorem}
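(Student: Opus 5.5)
The argument follows the stopping case: first obtain an LP MFG equilibrium for the effective controlled problem, then represent it through a relaxed (feedback) control.

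\textbf{Step 1: LP equilibrium.} Assumptions \ref{hyp:Dynamics_Poisson_X_Y--c} and \ref{hyp:f and g--c} should imply the standing hypotheses of the LP existence theorem for MFGs of control in \cite{4LPApproach}. Concretely:
\begin{itemize}
\item $\bar b_1$ is Lipschitz by \eqref{eq:Lipschitz eff os}, with linear growth by Proposition \ref{pro:E invariant m estimate};
\item $b_2$ is bounded and continuous;
\item $\sigma$ is bounded, Lipschitz and uniformly elliptic;
\item $A$ is compact and $m_0^X\in\mathcal P_2$.
\end{itemize}
For the payoff, the map $(t,x,a,\nu)\mapsto \bar f(t,x,a,\nu)$ is bounded and continuous, and it depends continuously on $\nu$ under weak convergence, because it enters only through integrals of bounded Lipschitz functions. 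The same holds for $g$. Invoking that theorem gives an LP equilibrium $(m^\star,\mu^\star)\in\mathcal R$ in the sense of Definition \ref{def:LP Nash effective--c}. Its proof is a Kakutani--Fan--Glicksberg fixed point of the best-response correspondence over the convex, compact (for the weak topology, by tightness from moment bounds), nonempty set $\mathcal R$.

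\textbf{Step 2: relaxed-control representation.} Disintegrate $m^\star_t(dx,da)=\hat\Lambda(t,x)(da)\,m^{\star(\text{x})}_t(dx)$ with a measurable kernel $\hat\Lambda$. By Theorem C.6 of \cite{4LPApproach} (the mimicking/superposition result already used in the proof of Theorem \ref{relaxed1}), the Fokker--Planck constraint defining $\mathcal R$ admits a solution process $\hat X$ with feedback relaxed control $\hat\Lambda(t,\hat X_t)$ whose time marginals reproduce $(m^\star,\mu^\star)$.

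This is the step I expect to be the main obstacle. Definition \ref{defeffective} requires a \emph{strong} solution on the given space with $\Lambda\in\mathcal A^{\mathcal R}(\mathbb F^{\text{x}})$, whereas $\hat\Lambda$ is only measurable in $x$. I would handle this as follows:
\begin{itemize}
\item Since $\sigma$ is uniformly elliptic and the drift $\bar b_1(t,x)+\int_A b_2(x,a)\hat\Lambda(t,x)(da)$ is bounded measurable plus Lipschitz, Veretennikov--Zvonkin strong existence and pathwise uniqueness apply.
\item Hence the SDE has a unique strong solution $X^{0,\star}$ driven by $W^{\text{x}}$ from $X_0$.
\item Set $\Lambda^\star_t:=\hat\Lambda(t,X^{0,\star}_t)$, which is $\mathbb F^{\text{x}}$-progressive.
\item Uniqueness of the linear Fokker--Planck equation with this drift (again from ellipticity) identifies the marginals, so the consistency condition (ii) holds with $m=m^\star$.
\end{itemize}

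\textbf{Step 3: optimality.} For any $\Lambda'\in\mathcal A^{\mathcal R}(\mathbb F^{\text{x}})$, Proposition \ref{admissibility} shows that its occupation pair $(m^{\Lambda'},\mu^{\Lambda'})$ lies in $\mathcal R$. Because $J^0(\cdot,m^\star)$ is linear in the occupation pair when $m^\star$ is frozen, the LP inequality reads exactly $J^0(\Lambda^\star,m^\star)\ge J^0(\Lambda',m^\star)$. This gives the relaxed equilibrium.

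\textbf{Strict controls under Assumption \ref{hyp:Convex}.} Apply the standard Filippov-type measurable selection, in the spirit of El Karoui--Huu Nguyen--Jeanblanc, to the feedback kernel:
\begin{itemize}
\item Convexity of $K(t,x)$ gives a measurable $\hat a(t,x)\in A$ with $b_2(x,\hat a)=\int b_2(x,a)\hat\Lambda(t,x)(da)$ and $\bar f(t,x,\hat a,m^\star_t)\ge\int\bar f(t,x,a,m^\star_t)\hat\Lambda(t,x)(da)$.
\item Replacing $\hat\Lambda$ by $\delta_{\hat a}$ leaves the state dynamics, and hence $m^{\star(\text{x})}$ and $\mu^\star$, unchanged, and does not decrease the payoff.
\item By the LP inequality that payoff is already maximal. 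So $\alpha^\star_t=\hat a(t,X^{0,\star}_t)$ is optimal against the mean-field terms, which depend only on $m^{\star(\text{x})}$.
\end{itemize}
One point needs care. The fixed point involves the $a$-marginal of $m$, but $\bar f$ depends on $m$ only through its $x$-marginal. The new flow $\mathbb E[\mathbf 1_B(X^{0,\star}_t)\mathbf 1_C(\alpha^\star_t)]$ therefore yields the same costs, and together with $\alpha^\star$ it forms the required strict equilibrium.
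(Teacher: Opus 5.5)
Your proposal is correct and follows essentially the paper's route. That route has three parts: existence of an LP equilibrium from Theorem 3.11 of \cite{4LPApproach}; disintegration of $m^\star$ combined with Theorem C.6 of \cite{4LPApproach} and Veretennikov--Zvonkin strong well-posedness, which realizes the equilibrium on the original space with an $\mathbb F^{\text{x}}$-progressive relaxed control; and Proposition \ref{admissibility}, which gives optimality against every deviation. Two details differ, neither essentially: you identify the marginals through Fokker--Planck uniqueness where the paper relies on uniqueness in law of the strong solution, and you carry out the Filippov-type measurable selection for strict controls explicitly where the paper cites Proposition 3.22 of \cite{4LPApproach}.
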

\begin{proof}
Let $(m,\mu) \in \mathcal{R}$. By the disintegration theorem (\cite{carmona2018probabilistic}, Theorem 1.1), there exists a regular kernel $\nu: [0,T] \times \mathbb R \times \mathcal{B}(A) \to [0,1]$ such that 
\begin{align}
\label{eq:control decomposition}
m_t(dx,da)dt = \nu_{t,x}(da) m_t(dx,A)dt.
\end{align}
By Theorem C.6. in \cite{4LPApproach}, there exist a filtered probability space $(\Omega,\mathcal{F},\mathbb{F},\mathbb{P})$, a Brownian motion $W^{\text{x}}$, and an $\mathbb{F}$--adapted process $X$ satisfying
\begin{align}
\label{eq:effective-rep}
& dX_t =\left(\bar{b}_1(t,X_t) + \int_{A} b_2(X_t,a) \nu_{t,X_t}(da) \right)dt + \sigma(t,X_t)dW^{\text{x}}_t, \quad 0\leq t \leq T, \quad X_0 \sim m_0^X, \notag
\\
& m_t(B \times C)  = \Ee^{\mathbb{P}}\left[ \textbf{1}_B(X_t)\nu_{t,X_t}(C) \right], \quad B \in \mathcal{B}(\mathbb R), \, C \in \mathcal{B}(A), \quad t \in [0,T], \notag
\\
& \mu(B) = \Ee^{\mathbb{P}}\left[\textbf{1}_B(X_T ) \right], \quad B \in \mathcal{B}(\mathbb R).
\end{align}
Since the stochastic differential equation in \eqref{eq:effective-rep} admits an unique strong solution (see e.g. Theorem 4 in \cite{veretennikov1980strong}, Theorem 4-2 in \cite{zvonkin1974transformation} and Theorem 3 in \cite{zvonkin1975strong}), we conclude that the representation \eqref{eq:effective-rep} for $(m,\mu)$ holds on the initial probability space.
We therefore have 
\begin{align}\label{Rbelongs}
\mathcal{R} \subset \{(m^\Lambda, \mu^\Lambda),\,\,  \Lambda \in \mathcal{A}^{\mathcal{R}}(\mathbb{F}^{\text{x}})\}.
\end{align}

Conversely, by Proposition \ref{admissibility}, if we consider the initial probability space $(\Omega,\mathcal{F}, \mathbb{F}^{\text{x}},\mathbb{P})$, $\Lambda$ a $\mathbb{F}^{\text{x}}$-progressively measurable process with values in $\mathcal{P}(A)$ and the process $X$ given by \eqref{eq:dynamics effective--cR}, then  the couple of measures $(m,\mu)$ defined by \eqref{mesmu}-\eqref{mesm} belongs to $\mathcal{R}$, i.e.
\begin{align}\label{belongstoR}
 \{(m^\Lambda, \mu^\Lambda),\,\,  \Lambda \in \mathcal{A}^{\mathcal{R}}(\mathbb{F}^{\text{x}})\} \subset \mathcal{R}.
\end{align}
In view of \eqref{Rbelongs} and \eqref{belongstoR} there is a one-to-one correspondence between $\mathcal R$ and the set $\{(m^\Lambda, \mu^\Lambda),\,\,  \Lambda \in \mathcal{A}^{\mathcal{R}}(\mathbb{F}^{\text{x}})\}$. Therefore, from Theorem 3.11 in \cite{4LPApproach}, which establishes the existence of an LP MFG equilibrium, we derive the existence of an effective MFG equilibrium with \textit{relaxed control} in the sense of Definition \ref{defeffective}. Under Assumption \ref{hyp:Convex}, in view of Proposition 3.22 in \cite{4LPApproach}, there exists an effective MFG equilibrium with strict control in the sense of Definition \ref{def:strict}. 
\end{proof}

\section{Appendix}
\label{Sec:Appendix}

\subsection{Equality of supremum under independent enlargement of filtration}
\noindent \textbf{Proof of Proposition \ref{pro:sup equality} .}
\begin{proof}
We focus on the inequality 
\begin{align}
\label{eq:Sups ineq}
\sup_{\tau \in \mathcal{T}(\mathbb{F}^{\bm{W}})} J^0(\tau,m,\mu) \leq \sup_{\tau \in \mathcal{T}(\mathbb{F}^{\text{x}})} J^0(\tau,m,\mu).
\end{align}
Consider the Snell envelope (\cite{Peskir}, Chapter I, Section 2) $\overline{Y}^1 = (\overline{Y}^1_t)_{0 \leq t \leq T}$ of the process $Y_t := \int_0^t \bar{f}(s,X^0_s,m_s) ds + g(t,X^0_t,\mu)$ w.r.t. $\mathbb{F}^{\text{x}}$; that is, $\bar{Y}^1_t = \text{esssup}_{\tau \geq t} \Ee[Y_\tau|\mathcal{F}^{\text{x}}_t]$. Because $Y$ is $\mathbb{F}^{\text{x}}$--adapted, it is also $\mathbb{F}^{\bm{W}}$--adapted, and we consider its Snell envelope $\overline{Y}^2$ w.r.t. $\mathbb{F}^{\bm{W}}$. From the Doob--Meyer decomposition of the super--martingale $\overline{Y}^1$, we have $\overline{Y}^1 = M - A$, where $M$ is a $\mathbb{F}^{\text{x}}$--martingale and $A$ is an adapted non-decreasing process. Because $X_0$ and $W^{\text{x}}$ are independent from $Y_0$, $W^{\text{y}}$, and $W^{\text{c}}$, the filtrations they generate are independent. By Proposition 1.21 in \cite{aksamit2017enlargement}, this implies that $M$ is also a $\mathbb{F}^{\bm{W}}$--martingale. Hence, $\overline{Y}^1$ is a $\mathbb{F}^{\bm{W}}$–super--martingale dominating $Y$. Since $\overline{Y}^2$ is the minimal such super--martingale, we obtain $\overline{Y}^2 \leq \overline{Y}^1$, and in particular $\overline{Y}^2_0 \leq \overline{Y}^1_0$, which yields \eqref{eq:Sups ineq}. The opposite inequality follows by taking the super--martingale $(\Ee[\overline{Y}^2_t|\mathcal{F}^{\text{x}}_t])_{0\leq t \leq T}$. The claim follows.
\end{proof}

\subsection{Moment estimates on the invariant measure}

\noindent \textbf{Proof of Proposition \ref{pro:E invariant m estimate}.}
\begin{proof}
Under Assumptions \ref{hyp:Dynamics_Discret_X_Y}-\ref{hyp:Dynamics_Discret_X_Y A1}, the invariant distribution is Gaussian and therefore has finite absolute moments of every order. Suppose now that Assumptions \ref{hyp:Dynamics_Poisson_X_Y}--\ref{hyp:RocknerAs2}--\ref{hyp:RocknerAb2}--\ref{hyp:fast coeff} hold. Set
\[
a(y):=\Sigma^2(y)+\hat{\Sigma}^2(y).
\]
By Assumption \ref{hyp:Dynamics_Poisson_X_Y}--\ref{hyp:RocknerAs2}, there exists $\lambda>0$ such that $\lambda^{-1}\le a(y)\le \lambda$ for all $y\in\Rr$.

By the standard one-dimensional invariant-density formula for the diffusion \eqref{eq:Fast system} (see, e.g., \cite{metafune2005global}, Theorem 2.1, and \cite{bianca2017existence}, Proposition 5.3), $\bar{\mu}$ has a density of the form
\begin{align}
\label{eq:Aux Invm0}
f_{\bar{\mu}}(y)
=\frac{C_0}{a(y)}
\exp\left( \int_0^y \frac{2B(z)}{a(z)}\,dz \right),
\end{align}
where $C_0>0$ is a normalizing constant. 

Fix $p'\geq 1$. Since $yB(y)\to -\infty$ as $|y|\to\infty$, we may choose $K>0$ such that $2K/\lambda>p'+1$, and then choose $R>1$ so that
\[
yB(y)\le -K,\qquad |y|\ge R.
\]
For $y\ge R$, this implies $B(y)\le -K/y$. Hence, using $a(y)\le \lambda$ and the fact that $B(y)<0$ on $[R,\infty)$,
\begin{align*}
\int_0^y \frac{2B(z)}{a(z)}\,dz
&\le C_R+\int_R^y \frac{2B(z)}{\lambda}\,dz
\le C_R-\frac{2K}{\lambda}\int_R^y \frac{dz}{z}
\le C_R-\frac{2K}{\lambda}\log y,
\end{align*}
where $C_R$ denotes a finite constant depending only on $R$. Therefore,
\[
f_{\bar{\mu}}(y)\le C y^{-2K/\lambda},\qquad y\ge R.
\]
Similarly, for $y\le -R$, the inequality $yB(y)\le -K$ gives $B(y)\ge K/|y|$. Consequently,
\begin{align*}
\int_0^y \frac{2B(z)}{a(z)}\,dz
&=C_R-\int_y^{-R}\frac{2B(z)}{a(z)}\,dz
\le C_R-\frac{2K}{\lambda}\int_y^{-R}\frac{dz}{|z|}
\le C_R-\frac{2K}{\lambda}\log |y|,
\end{align*}
and hence
\[
f_{\bar{\mu}}(y)\le C |y|^{-2K/\lambda},\qquad y\le -R.
\]
Since $2K/\lambda>p'+1$, the two tail estimates imply
\[
\int_{\Rr}|y|^{p'}\bar{\mu}(dy)<\infty.
\]
\end{proof}

\subsection{Existence of the projection with kernels}

\begin{lemma}
Fix $k \in \mathcal{K}^{\bm{W}}$. There exists a measure flow $(m_t)_{0 \leq t \leq T}$ which is a version of the conditional expectation flow $\mathbb{E}\left[\textbf{1}_\cdot(X_t)k(\cdot, (t,T])|\mathcal{F}_t^{\text{c}} \right]$. 
\end{lemma}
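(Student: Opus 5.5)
The plan is to build the flow from a regular conditional distribution, and then fix measurability in $t$ with an optional projection. We do not define $m_t(B)$ separately for each Borel set $B$, since that only gives a.s.\ identities with $B$-dependent null sets.

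\textbf{Step 1 (the random measure).} Consider the finite random measure on $\mathbb R$ given by
\[
\Xi_t(\cdot):=\mathbf 1_{\cdot}(X_t)\,k(\cdot,(t,T]),
\]
whose mass $k(\cdot,(t,T])$ lies in $[0,1]$. Enlarge the space by an independent uniform variable $U$, exactly as in the proof of Theorem~\ref{thm:Existence 2S epsilon MFG eq randomized}, and let $\tau=\bar F^{-1}(U)$. Then $\Xi_t(B)=\bar{\mathbb E}[\mathbf 1_B(X_t)\mathbf 1_{(t,T]}(\tau)\mid\mathcal F_T]$, where $\mathcal F_T$ is the original $\sigma$-algebra.

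\textbf{Step 2 (fixed $t$).} The pair $(X_t,\mathbf 1_{\{\tau>t\}})$ takes values in the Polish space $\mathbb R\times\{0,1\}$. So there is a regular conditional distribution $\pi_t(\omega,\cdot)$ of this pair given $\mathcal F^{\text{c}}_t$. Set
\[
m_t(\omega,B):=\pi_t(\omega,B\times\{1\}).
\]
For every fixed $t$ this is a subprobability measure in $B$. For each $B$ it is a version of $\mathbb E[\mathbf 1_B(X_t)k(\cdot,(t,T])\mid\mathcal F^{\text{c}}_t]$, by the tower property applied through $\mathcal F_T\supset\mathcal F^{\text{c}}_t$.

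\textbf{Step 3 (joint measurability in $t$).} This is the main obstacle: the kernels $\pi_t$ are chosen separately for each $t$, so $(t,\omega)\mapsto m_t(\omega,B)$ need not be measurable, let alone $\mathbb F^{\text{c}}$-progressive. The first tool to try is the measure-valued optional projection of Yor (as in Carmona--Delarue, Vol.~II, on conditional laws given the common noise).

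Fix a countable convergence-determining class $\{\varphi_j\}\subset C_b(\mathbb R)$. For each $j$, the process $t\mapsto\varphi_j(X_t)k(\cdot,(t,T])$ is bounded, measurable, and right-continuous with left limits. Hence it has an $\mathbb F^{\text{c}}$-optional projection ${}^o\!H^j_t$, which is unique up to indistinguishability and equals $\mathbb E[\varphi_j(X_t)k(\cdot,(t,T])\mid\mathcal F^{\text{c}}_t]$ a.s.\ for each $t$.

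Independence of $(X_0,W^{\text{x}},W^{\text{y}})$ from $\mathbb F^{\text{c}}$ is used here. It guarantees that $\mathcal F^{\text{c}}_t$-conditioning coincides with $\mathcal F^{\text{c}}_T$-conditioning for $\mathcal F^{\bm W}_t$-measurable integrands. This gives the usual conditions needed for the section theorem.

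\textbf{Step 4 (assembling the flow).} Outside an evanescent set, the family $j\mapsto{}^o\!H^j_t(\omega)$ is a positive linear functional of mass at most $1$ on the linear span of the $\varphi_j$. The main check is tightness: $\mathbb E[\sup_t|X_t|^2]<\infty$, so the family is uniformly tight off a null set. Identify this functional with a subprobability measure $m_t(\omega,\cdot)$. On the exceptional set, put $m_t:=0$.

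Progressive measurability of $(t,\omega)\mapsto m_t(\omega,B)$ follows by a monotone class argument from that of each ${}^o\!H^j$. For each fixed $t$, $m_t$ agrees a.s.\ with the kernel $\pi_t$ of Step 2 on the class $\{\varphi_j\}$, and hence on all of $\mathcal B(\mathbb R)$. Finally $\int_0^T m_t(\mathbb R)\,dt\le T$, so $m\in\mathbb V(\mathbb R)$, and it is the required version.
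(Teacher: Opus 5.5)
Your proposal is correct in outline, but it follows a genuinely different route from the paper.

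The paper never conditions time by time. It conditions once on the terminal $\sigma$-algebra $\mathcal F^{\text{c}}_T$, using the same randomization by $U$ as your Step 1. It takes a single regular conditional law $m(\omega,\cdot)$ of the path-valued pair $(X,\tau)$ on the Polish space $C([0,T];\mathbb R)\times[0,T]$, and sets $m_t(\omega,A):=m\bigl(\omega,\pi_t^{-1}(A)\times(t,T]\bigr)$. Here $t$ enters only through the jointly measurable test function $(t,x,\theta)\mapsto\mathbf 1_A(x_t)\mathbf 1_{(t,T]}(\theta)$. So the measure property holds for every $t$ at once, and joint measurability in $(t,\omega)$ comes for free. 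The passage from $\mathcal F^{\text{c}}_T$- to $\mathcal F^{\text{c}}_t$-conditioning is exactly the fact you record in Step 3: future increments of $W^{\text{c}}$ are independent of $\mathcal F^{\bm W}_t$. Adaptedness then follows from completeness. In other words, the gluing-in-$t$ obstacle you identify disappears once that observation is used to replace the family $(\mathcal F^{\text{c}}_t)_t$ by a single $\sigma$-algebra.

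Your optional-projection route is heavier but buys more. It produces an optional, hence progressively measurable, flow directly, which is what membership in $\mathbb V$ asks for; the paper's construction as written only gives a jointly measurable adapted flow. It also does not use the immersion property at all.

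Two points in your write-up need correcting or tightening:
\begin{itemize}
\item Your sentence attributing the usual conditions to independence is misplaced. $\mathbb F^{\text{c}}$ satisfies them because it is an augmented Brownian filtration, and optional projections of bounded measurable processes exist regardless of independence.
\item In Step 4, the span of a convergence-determining subset of $C_b(\mathbb R)$ need not be dense, so the extension to a measure is not automatic as stated. The cleanest fix is to take $\{\varphi_j\}$ countable and dense in $C_0(\mathbb R)$. Impose linearity, positivity and $|{}^o\!H^{\varphi}|\le\|\varphi\|_\infty$ on the countable $\mathbb Q$-span outside one evanescent set, then apply the Riesz representation theorem. With $C_0$, the fixed-$t$ identification with $\pi_t$ needs no tightness argument.
\end{itemize}
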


\begin{proof}
(i) We show first that there exists a regular probability kernel $m:\Omega \times \mathcal{B}(C([0,T];\Rr)) \times \mathcal{B}([0,T]) \mapsto [0,1]$ such that $m(A \times B)=\mathbb{E}\left[\textbf{1}_A(X) k(\cdot,B)|\mathcal{F}_t^{\text{c}}\right]$ a.s. Define $F_\omega(t):=k(\omega,[0,t])$. For each $\omega$, let $\bar{F}_\omega^{-1}:[0,1] \mapsto [0,T]$ denote the right-continuous inverse of $t \mapsto F_\omega(t)$. Consider the enlarged space $(\Omega \times \bar{\Omega},\bar{\mathcal{F}},\bar{\mathbb{F}},\bar{\mathbb{P}})$, where $\bar{\Omega}$ supports a random variable $U$ uniformly distributed between $0$ and $1$ independent of $W^{\text{c}}$, the $\sigma$-algebra $\mathcal{F}$ is defined by $\bar{\mathcal{F}}=\mathcal{F} \otimes \sigma(U)$, the filtration $\bar{\mathbb{F}}$ is given by $\bar{\mathbb{F}}:=\{\mathcal{F}_t^{\boldsymbol{W}}\otimes \sigma(U),\,\, 0 \leq t \leq T \}$ and $\bar{\mathbb{P}}(dw,dw')=\mathbb{P}(dw)\mathbb{P}^1(dw')$.\\
We have 
\begin{align}
\left\{ (\omega, \bar{\omega}) \in \Omega \times \bar{\Omega}: \bar{F}^{-1}_\omega(U(\bar{\omega})) \leq t \right\}=\left\{ (\omega, \bar{\omega}) \in \Omega \times \bar{\Omega}: U(\bar{\omega}) \leq F_\omega(t) \right\} \in \bar{\mathcal F}_t,
\end{align}
which implies that $\tau:=\bar{F}^{-1}_\omega(U(\bar{\omega}))$ is a $\bar{\mathbb{F}}$-stopping time. We have 
$$\bar{\mathbb{P}}(\bar{F}^{-1}_\omega(U(\bar{\omega})) \leq t |\mathcal{F}^{\bm{W}})=k(\cdot,[0,t]) \text{\,\, a.s.}$$

We therefore have 
\begin{align}
\mathbb{E} [\textbf{1}_{A}(X) k(\cdot,[0,t])|\mathcal{F}^{\text{c}}_T]=\bar{\mathbb{E}} [\textbf{1}_{A}(X) \textbf{1}_{[0,t]}(\tau)|\mathcal{F}^{\text{c}}_T] \text{\,\, a.s.}
\end{align}
By a monotone class argument, the above equality extends to all $B \in \mathcal{B}([0,T])$.

Using standard results, since $(X,\tau)$ takes values in the Polish space $C([0,T];\Rr)\times[0,T]$, there exists a regular conditional probability version of the above conditional probability.\\

(ii) Observe that $m_t(A)=m(\pi_t^{-1}(A) \times (t,T])=\mathbb{E}[\textbf{1}_A(X_t)k(\cdot,(t,T])|\mathcal{F}_t^{\text{c}}]=\mathbb{E}[\textbf{1}_A(X_t)k(\cdot,(t,T])|\mathcal{F}_t^{\text{c}}]$ a.s., since the processes $(X_t)$ and $(k(\cdot,(t,T]))$ are $\mathbb{F}$-adapted and $W^{\text{c}}$ is a $\mathbb{F}$-Brownian motion. Furthermore, since the filtration is complete, the flow $(m_t)$ is $\mathbb{F}^{\text{c}}$-adapted.
\end{proof}

\subsection{Proof of Proposition \ref{pro:Strong rate sup}: Assumption \ref{hyp:Dynamics_Poisson_X_Y}}
\begin{proof}
We start by recalling the following fixed-time estimate which follows directly from Theorem 2.1 in \cite{rockner2021averaging} (by selecting $\alpha_\delta = \sqrt{\delta}$, $\gamma_\delta, \beta_\delta=1$, $H,c\equiv 0$, $\vartheta =1$, and $q=2$ in their framework, corresponding to their Regime 1 in (1.9)). 
Let $0 < \delta <1/2$. For all $T>0$, there exists a constant $C>0$ independent of $\delta$ such that 
\begin{align}
\sup\limits_{0\leq t \leq T} \Ee|X^\delta_t - X^0_t|^2 \leq C \delta.\label{strong rate part 1}
\end{align}

We now proceed with the proof of Proposition \ref{pro:Strong rate sup}. Using the integral representation of \eqref{eq:dynamics 2S} and \eqref{eq:dynamics effective}, we have
\begin{align}
\label{Aux Integral rep}
X^\delta_t - X^0_t = \int_0^t \!( b(s,X^\delta_s,Y^\delta_s) - \bar{b}(s,X^0_s) ) ds + \int_0^t \! ( \sigma(s, X^\delta_s)-\sigma(s, X^0_s) ) dW^{\text{x}}_s =: A_t + M_t.
\end{align}
By the BDG inequality with exponent $p=2$, the Lipschitz continuity of $\sigma$, \eqref{strong rate part 1}, and Jensen’s inequality, we obtain
\begin{align}
\label{eq:M 1}
\Ee \left[ \sup_{0 \leq t \leq T} |M_t| \right] \leq 
C \Ee \left[ \int_0^T \left(\sigma(s,X^\delta_s)-\sigma(s,X^0_s) \right)^2 ds \right]^{1/2} \leq C \sqrt{\delta}.
\end{align}
Writing
\begin{align}
\label{Aux At def}
A_t = \int_0^t \left( b(s,X^\delta_s,Y^\delta_s) - \bar{b}(s,X^\delta_s) \right) ds + \int_0^t \left( \bar{b}(s,X^\delta_s) - \bar{b}(s,X^0_s) \right) ds =: A^1_t + A^2_t,
\end{align}
the Lipschitz continuity of $\bar{b}$ (see \eqref{eq:Lipschitz eff os}) and \eqref{strong rate part 1} give
\begin{align}
\label{Aux A2t bound}
\Ee  \left[ \sup_{0\leq t \leq T} \left| A^2_t \right| \right] \leq C \int_0^T \Ee \left[ |X^\delta_s - X^0_s| \right] ds \leq C \sqrt{\delta}. 
\end{align}
The estimate for $A^1$ follows from Lemma \ref{lem:supL2_H}. 
\end{proof}

\subsection{Proof of Proposition \ref{pro:Strong rate sup}: Assumption \ref{hyp:Dynamics_Discret_X_Y}}
\begin{proof}
We start by recalling the fixed-time estimate from Theorem 2.3 in \cite{rockner2021strong}.
  Let $0 < \delta <1/2$. For all $T>0$, there exists a constant $C>0$ independent of $\delta$ such that 
\begin{align}
\sup_{t\in[0,T]} \Ee \left[ | X^\delta_t - X^0_t |^2 \right] \leq C \delta^{2/3}.\label{strong rate part 2}
\end{align}

We now proceed with the proof of Proposition  \ref{pro:Strong rate sup}. Using the integral forms of \eqref{eq:dynamics 2S} and \eqref{eq:dynamics effective}, write
\[
X_t^\delta-X_t^0
=R_t^\delta+A_t^\delta+M_t^\delta,
\]
where
\[
R_t^\delta:=\int_0^t \Phi(s,X_s^\delta,Y_s^\delta)\,ds,\qquad
A_t^\delta:=\int_0^t(\bar b(s,X_s^\delta)-\bar b(s,X_s^0))\,ds,
\]
and
\[
M_t^\delta:=\int_0^t(\sigma(s,X_s^\delta)-\sigma(s,X_s^0))\,dW_s^{\mathrm x}.
\]
The terms $A^\delta$ and $M^\delta$ are controlled as in the preceding proof, using \eqref{strong rate part 2} in place of the fixed-time estimate under Assumption \ref{hyp:Dynamics_Poisson_X_Y}. More precisely,
\begin{equation}
\label{eq:OU A M bound direct}
\Ee\left[\sup_{0\leq t\leq T}|A_t^\delta|\right]
+\Ee\left[\sup_{0\leq t\leq T}|M_t^\delta|\right]
\leq C\delta^{1/3}.
\end{equation}

It remains to estimate $R^\delta$. Let $\Delta\in(0,1)$, to be chosen later, and set $t_k=k\Delta$ for $k=0,\ldots,N$, where $N=\lceil T/\Delta\rceil$. Put $t_N=T$ if necessary. For $s\in[t_k,t_{k+1})$, write
\[
\Phi(s,X_s^\delta,Y_s^\delta)
=\Phi(t_k,X_{t_k}^\delta,Y_s^\delta)
+\big(\Phi(s,X_s^\delta,Y_s^\delta)-\Phi(t_k,X_{t_k}^\delta,Y_s^\delta)\big).
\]
By the Lipschitz continuity of $b$ in Assumption \ref{hyp:Dynamics_Discret_X_Y} and standard moment estimates for $X^\delta$, the freezing error satisfies
\begin{equation}
\label{eq:OU freezing error sup}
\Ee\left[
\sup_{0\leq t\leq T}
\left|\int_0^t\big(\Phi(s,X_s^\delta,Y_s^\delta)-\Phi(\theta_s^\Delta,X_{\theta_s^\Delta}^\delta,Y_s^\delta)\big)\,ds\right|
\right]
\leq C\Delta^{1/2},
\end{equation}
where $\theta_s^\Delta:=t_k$ for $s\in[t_k,t_{k+1})$.

Define the block variables
\[
I_k:=\int_{t_k}^{t_{k+1}}\Phi(t_k,X_{t_k}^\delta,Y_s^\delta)\,ds,
\qquad
\mathcal G_k:=\mathcal F_{t_k}^{\mathbf W}.
\]
The explicit Ornstein--Uhlenbeck transition formula gives, for $s\geq t_k$,
\[
Y_s^\delta=\theta+(Y_{t_k}^\delta-\theta)e^{-\kappa(s-t_k)/\delta}
+\frac{\sqrt{\sigma_1^2+\sigma_2^2}}{\sqrt{\delta}}
\int_{t_k}^s e^{-\kappa(s-r)/\delta}\,d\widetilde W_r,
\]
where $\widetilde W$ is a Brownian motion obtained from $(W^{\mathrm y},W^{\mathrm c})$. Since $y\mapsto b(t,x,y)$ is Lipschitz and $\bar b(t,x)$ is the average with respect to the invariant Gaussian law, this formula implies
\begin{equation}
\label{eq:OU block mean}
\left|\Ee[I_k\mid\mathcal G_k]\right|
\leq C\delta(1+|Y_{t_k}^\delta|),
\end{equation}
and
\begin{equation}
\label{eq:OU block variance}
\Ee\left[\left|I_k-\Ee[I_k\mid\mathcal G_k]\right|^2\mid\mathcal G_k\right]
\leq C\delta\Delta(1+|Y_{t_k}^\delta|^2).
\end{equation}
Indeed, \eqref{eq:OU block mean} follows by integrating the exponential memory term over $[t_k,t_{k+1}]$, and \eqref{eq:OU block variance} follows from the covariance formula of the Gaussian OU process and the Lipschitz bound on $\Phi$.

The moment bound implied by Assumption \ref{hyp:Dynamics_Discret_X_Y} gives
\[
\sup_{0<\delta<1}\sup_{0\leq k\leq N}\Ee[1+|Y_{t_k}^\delta|^2]\leq C.
\]
Hence, using \eqref{eq:OU block mean},
\[
\Ee\left[\sum_{k=0}^{N-1}|\Ee[I_k\mid\mathcal G_k]|\right]
\leq C\frac{\delta}{\Delta}.
\]
Moreover, the sequence
\[
\sum_{k=0}^{n-1}\big(I_k-\Ee[I_k\mid\mathcal G_k]\big),\qquad n=0,\ldots,N,
\]
is a discrete-time martingale. By Doob's inequality and \eqref{eq:OU block variance},
\[
\Ee\left[\max_{0\leq n\leq N}\left|
\sum_{k=0}^{n-1}\big(I_k-\Ee[I_k\mid\mathcal G_k]\big)\right|\right]
\leq C\left(\sum_{k=0}^{N-1}\Ee|I_k-\Ee[I_k\mid\mathcal G_k]|^2\right)^{1/2}
\leq C\sqrt{\delta}.
\]
It remains only to pass from the discrete maximum to the continuous-time supremum. For $u\in[t_k,t_{k+1}]$, set
\[
L_k:=\sup_{t_k\leq u\leq t_{k+1}}
\left|\int_{t_k}^u\Phi(t_k,X_{t_k}^\delta,Y_s^\delta)\,ds\right|.
\]
Then, by Cauchy's inequality and the uniform moment bounds for $X^\delta$ and $Y^\delta$,
\[
\Ee[L_k^2]
\leq \Delta \int_{t_k}^{t_{k+1}}
\Ee\left[|\Phi(t_k,X_{t_k}^\delta,Y_s^\delta)|^2\right]\,ds
\leq C\Delta^2.
\]
Consequently,
\[
\Ee\left[\max_{0\leq k<N}L_k\right]
\leq \Ee\left[\left(\sum_{k=0}^{N-1}L_k^2\right)^{1/2}\right]
\leq \left(\sum_{k=0}^{N-1}\Ee[L_k^2]\right)^{1/2}
\leq C\Delta^{1/2}.
\]
Together with \eqref{eq:OU freezing error sup}, this yields
\[
\Ee\left[\sup_{0\leq t\leq T}|R_t^\delta|\right]
\leq C\left(\Delta^{1/2}+\frac{\delta}{\Delta}+\sqrt{\delta}\right).
\]
Choosing $\Delta=\delta^{2/3}$ gives
\[
\Ee\left[\sup_{0\leq t\leq T}|R_t^\delta|\right]\leq C\delta^{1/3}.
\]
Combining this estimate with \eqref{eq:OU A M bound direct}, we obtain
\[
\Ee\left[\sup_{0\leq t\leq T}|X_t^\delta-X_t^0|\right]\leq C\delta^{1/3}.
\]
\end{proof}

\subsection{Proof of Proposition \ref{pro:Strong rate diffusion--c}.}
\begin{proof}
We adapt the proof of Theorem 2.1 in \cite{rockner2021averaging}. Let $\Lambda \in \mathcal{A}^{\mathcal{R}}(\mathbb{F}^{\bm{W}})$, and denote by $X^{\delta}$ and $X^{0}$ the unique strong solutions of \eqref{eq:dynamics 2S--c} and \eqref{eq:dynamics effective--cR}, respectively, where the superscript $\Lambda$ is omitted for clarity. Let $\hat{\mathcal{L}}(t,x):= \bar{b}_1(t,x) \partial_x + \frac{1}{2}\sigma^2(t,x) \partial_{xx}$, and consider on $[0,T]\times \Rr$  the backward equation
\begin{align}
\label{eq:bPDE}    
\partial_t v(t,x) + \hat{\mathcal{L}}v(t,x) + \bar{b}_1(t,x) =0, \quad v(T,x) =0.
\end{align}
By Lemma 3.3 in \cite{rockner2021averaging}, $\bar{b}_1 \in C^{1/2,1}_b([0,T] \times \Rr)$, and by Assumption \ref{hyp:Dynamics_Poisson_X_Y--c}--\ref{hyp:RocknerAG--c2}, there exists a solution $v \in L^{\infty}([0,T];C^3_b(\Rr)) \cap C^{3/2}_b([0,T];L^{\infty}(\Rr))$ (see equation (5.2) and Theorem 5.1 in Chapter IV, Section 5 of \cite{ladyzhenskaia1968linear} and Theorem 2 in \cite{fedrizzi2011pathwise}) such that for $T$ small, 
\begin{align}
\label{eq:Aux Control 0}
|\partial_x v(t,x)| \leq \tfrac{1}{2}
\end{align} 
for all $(t,x) \in [0,T]\times \Rr$. In the following, we apply the arguments for such $T$. The result for general $T$ is obtained by iteratively extending this result. Set $\Gamma(t,x):= x + v(t,x)$ and define
\begin{align}\label{def1}
V^\delta_t:= \Gamma(t,X^\delta_t),\,\,V^0_t:= \Gamma(t,X^0_t).
\end{align}
Let $\tilde{b}(t,x,y):=b_1(t,x,y)-\bar{b}_1(t,x)$. Then, the map $x\mapsto \Gamma(t,x)$ is a $C^1$--diffeomorphism for every $t \in [0,T]$, $\tfrac{1}{2} \leq |\partial_x \Gamma(t,x)| \leq \tfrac{3}{2}$, $V^\delta_0 = V^0_0$, and using \eqref{eq:bPDE}, we have
\begin{align*}
dV^\delta_t & = \left(\! \partial_t v(t,X^\delta_t) + (\partial_x v(t,X^\delta_t) + 1) \!\left(\! b_1(t,X^\delta_t,Y^\delta_t) +\!\int_{A} \!\!b_2(X^\delta_t,a) \Lambda_t(da) \!\right) + \tfrac{1}{2}\partial_{xx}v(t,X^\delta_t)\sigma^2(t,X^\delta_t) \!\right)\! dt 
\nonumber \\
& \quad +(1+\partial_x v(t,X_t^\delta))\sigma(t,X_t^\delta)dW_t^{\text{x}} \nonumber \\
& = \partial_x \Gamma(t,X^\delta_t) \!\left(\! (\tilde{b}(t,X^\delta_t,Y^\delta_t) + \!\int_{A} \!\! b_2(X^\delta_t,a)\Lambda_t(da) ) dt + \sigma(t,X^\delta_t) dW^{\text{x}}_t \!\right),
\\
dV^0_t & = \left(\! \partial_t v(t,X^0_t) + (\partial_x v(t,X^0_t) + 1) \left(\! \bar{b}_1(t,X^0_t) + \!\int_{A} \!\! b_2(X^0_t,a)\Lambda_t(da) \!\right) + \tfrac{1}{2}\partial_{xx}v(t,X^0_t)\sigma^2(t,X^0_t) \!\right)\! dt 
\\
& \quad + (1+\partial_x v(t,X^0_t) ) \sigma(t,X^0_t) dW^{\text{x}}_t 
= \partial_x \Gamma(t,X^0_t) \left( \!\int_{A} \!\! b_2(X^0_t,a)\Lambda_t(da) dt + \sigma(t,X^0_t) dW^{\text{x}}_t \!\right),
\end{align*}
which shows that
\begin{align}
\label{eq:Aux Control 1}
V^\delta_t - V^0_t & = \!\int_0^t \!\!\left(\! \partial_x \Gamma(s,X^\delta_s) ( \tilde{b}(s,X^\delta_s,Y^\delta_s) + \!\int_{A}\!\! b_2(X^\delta_s,a)\Lambda_s(da) ) - \partial_x \Gamma(s,X^0_s) \!\int_{A}\!\! b_2(X^0_s,a)\Lambda_s(da) \!\right) \!ds \notag
\\
& \quad + \!\int_0^t\!\! ( \partial_x \Gamma(s,X^\delta_s) \sigma(s,X^\delta_s) - \partial_x \Gamma(s,X^0_s) \sigma(s,X^0_s) ) dW^{\text{x}}_s.
\end{align}
Using the regularity of $v$, the Lipschitz assumption on $b_1$ and $b_2$, The Lipschitz continuity of $\partial_x \Gamma$, and boundedness of $b_2$ and $\partial_x \Gamma$, we have
\begin{align}
\label{eq:Aux Control 11}
& \left| \int_0^t \!\left(\! \partial_x \Gamma(s,X^\delta_s) ( \tilde{b}(s,X^\delta_s,Y^\delta_s) + \!\int_{A}\!\! b_2(X^\delta_s,a)\Lambda_s(da) ) - \partial_x \Gamma(s,X^0_s) \!\int_{A}\!\! b_2(X^0_s,a)\Lambda_s(da) \! \right)\! ds \right|^2 \notag
\\
& \leq C \!\left( \left| \int_0^t\!\! (\partial_x \Gamma \, \tilde{b})(s,X^\delta_s,Y^\delta_s) ds \right|^2 \!+ \left| \int_0^t\!\int_{A} \! ( (\partial_x \Gamma \, b_2)(s,X^\delta_s,a) - (\partial_x \Gamma \, b_2)(s,X^0_s,a) ) \Lambda_s(da) ds \right|^2 \right) \notag
\\
& \leq C \!\left( \left| \int_0^t \!(\partial_x \Gamma\, \tilde{b})(s,X^\delta_s,Y^\delta_s) ds \right|^2 \!+ \bigg( \!\int_0^t \!\! \int_{A} \!\left( \left| \partial_x \Gamma(s,X^\delta_s)\right| \left| b_2(X^\delta_s,a) - b_2(X^0_s,a) \right|  \right. \right.  \notag
\\
& \hskip6.2cm \left.\left. + \left| \partial_x \Gamma(s,X^\delta_s) - \partial_x \Gamma(s,X^0_s)\right| |b_2(X^0_s,a)|\right) \Lambda_s(da) ds \!\bigg)^2 \right) \notag
\\
& \leq C \!\left( \left| \int_0^t \! (\partial_x \Gamma \, \tilde{b}) (s,X^\delta_s,Y^\delta_s) ds \right|^2 \!+ \left( \int_0^t \!C |X^\delta_s-X^0_s| ds \!\right)^2 \right).
\end{align}
Because $\langle (\partial_x \Gamma\, \tilde{b})(t,x,y), \bar{\mu}\rangle =0$, $(\partial_x \Gamma\, \tilde{b})(\cdot,\cdot,y)\in C^{1/2,1}_b([0,T] \times \Rr)$ for all $y\in \Rr$, and $(\partial_x \Gamma\, \tilde{b})(t,x,\cdot)$ is Lipschitz on $y$ uniformly on $(t,x)$, by Assumption \ref{hyp:Dynamics_Poisson_X_Y--c}--\ref{hyp:RocknerAs--c2}--\ref{hyp:RocknerAb--c2}, Lemma 4.1 in \cite{rockner2021averaging} gives 
\begin{align}
\label{eq:Aux Control 2}
\Ee \left[ \left| \int_0^t \partial_x \Gamma(s,X^\delta_s) \tilde{b}(s,X^\delta_s,Y^\delta_s) ds \right|^2 \right] \leq C (\sqrt{\delta} + \delta)^2.  
\end{align}
For the integral w.r.t. $W^{\text{x}}$ in \eqref{eq:Aux Control 1}, since $x\mapsto (\partial_x \Gamma\, \sigma)(t,x) \in C^{1}_b(\Rr)$, It\^{o}'s isometry gives
\begin{align}
\label{eq:Aux Control 3}
\Ee \left[\int_0^t \!\! ( (\partial_x \Gamma\, \sigma) (s,X^\delta_s) - (\partial_x \Gamma\, \sigma) (s,X^0_s) )^2 ds\right] \leq C \Ee \left[ \int_0^t \!\!|X^\delta_s - X^0_s|^2 ds \right].
\end{align}
From the definitions of $V^\delta$ and $V^0$, and recalling that $x\mapsto \Gamma(t,x)$ is a $C^1$--diffeomorphism for every $t\! \in\! [0,T]$, the mean–value theorem, together with \eqref{eq:Aux Control 0}, \eqref{eq:Aux Control 1}, \eqref{eq:Aux Control 11}, \eqref{eq:Aux Control 2}, and \eqref{eq:Aux Control 3}, give
\begin{align*}
& \Ee \left[ |X^\delta_t - X^0_t|^2 \right] \leq C \Ee \left[ |V^\delta_t - V^0_t|^2 \right] \leq C \left( (\sqrt{\delta} + \delta)^2 +  \int_0^t \Ee \left[|X^\delta_s-X^0_s|^2\right] ds  \right).
\end{align*}
Applying Gr\"{o}nwall inequality yields the result for $T$ satisfying \eqref{eq:Aux Control 0}. The general case follows by iteratively extending this argument.
\end{proof}

\subsection{Technical lemmas}
\begin{lemma}\label{lemmaM}${}$
\begin{enumerate}
\item  Let Assumptions \ref{hyp:Dynamics_Poisson_X_Y} and \ref{hyp:Cost Poisson OS} hold and let $\tau \in \mathcal T(\mathbb F^{\mathbf W})$. Then, there exists $C<\infty$ such that 
  \begin{equation}
  \label{eq:tildeA1 bound final}
\left|\mathbb E\left[\int_0^\tau   F^2(t,X^0_t,\langle \hat f_1,m^\star_t\rangle)(F^1(t,X^\delta_t,Y^\delta_t) - \bar F^1(t,X^\delta_t)) dt\right]\right| \leq C \sqrt{\delta}. 
  \end{equation}
\item Let  Assumptions \ref{hyp:Dynamics_Poisson_X_Y--c} and \ref{hyp:f and g--c} hold and let $\Lambda \in \mathcal{A}^{\mathcal{R}}(\mathbb{F}^{\bm{W}})$. Then there exists $C<\infty$ such that 
$$
\left|\mathbb E\left[\int_0^T F^2(t,X^{0,\Lambda}_t, \langle \hat{f}_1, m_t^{\star(\text{x})} \rangle ) \left( F^1(t,X^{\delta,\Lambda}_t,Y^{\delta}_t) - \bar{F}^1(t,X^{\delta,\Lambda}_t)\right)dt\right]\right|\leq C\sqrt{\delta}.
$$
\end{enumerate}  
\end{lemma}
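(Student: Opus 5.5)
Fix $\tau\in\mathcal T(\mathbb F^{\mathbf W})$ and set $\phi(t,x):=F^2(t,x,\langle \hat f_1,m^\star_t\rangle)$ and $h(t,x,y):=F^1(t,x,y)-\bar F^1(t,x)$. By construction $h$ is centered, $\langle h(t,x,\cdot),\bar\mu\rangle=0$. The integrand has a random upper limit $\tau$ and is evaluated partly along $X^0$ and partly along $X^\delta$. The plan is therefore to apply the Poisson equation method of \cite{rockner2021averaging} to the pair $(X^\delta,Y^\delta)$, after replacing $\phi(t,X^0_t)$ by $\phi(t,X^\delta_t)$.

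\textbf{Step 1: replacing $X^0$ by $X^\delta$.} Since $F^2$ is Lipschitz in $x$ and $h$ is bounded, we have
\[
|\phi(t,X^0_t)-\phi(t,X^\delta_t)|\,|h|\le C|X^0_t-X^\delta_t|,
\]
so Proposition \ref{pro:Strong rate sup} bounds this error by $C\sqrt\delta$. It then suffices to bound
\[
\mathbb E\left[\int_0^\tau \Phi(t,X^\delta_t,Y^\delta_t)\,dt\right],\qquad \Phi:=\phi\, h .
\]
The function $\Phi$ is centered in $y$. It inherits $C^{1/2,1}_b$ regularity in $(t,x)$ from $F^2\in C^{1,2,1}_b$, $F^1(\cdot,\cdot,y)\in C^{1/2,1}_b$ and $\hat f_1\in C^2_b$, and it is Lipschitz in $y$. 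The point of using $\hat f_1\in C^2_b$ is that $t\mapsto\langle\hat f_1,m^\star_t\rangle$ is H\"older-$1/2$: by It\^o's formula its increments are controlled by the time spent in an interval together with the stopping mass, which is enough for $C^{1/2}$ in $t$ once $\mu^\star$ has no atoms in time. If atoms occur, I would instead keep only boundedness and measurability in $t$ and accept that the Poisson solution is less regular in $t$.

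\textbf{Step 2: Poisson equation.} Let $\psi(t,x,y)$ solve $\mathcal L_Y\psi(t,x,\cdot)=-\Phi(t,x,\cdot)$ with $\langle\psi(t,x,\cdot),\bar\mu\rangle=0$. Under Assumption \ref{hyp:Dynamics_Poisson_X_Y}(iii)--(v), \cite{rockner2021averaging} gives $\psi$ with polynomial growth in $y$ and the same $(t,x)$-regularity as $\Phi$ (their Proposition/Lemma on the Poisson equation). Apply It\^o's formula to $\psi(t,X^\delta_t,Y^\delta_t)$ up to the bounded stopping time $\tau$ and multiply by $\delta$:
\[
\int_0^\tau \Phi\,dt=\delta\bigl[\psi(0,\cdot)-\psi(\tau,\cdot)\bigr]+\delta\int_0^\tau(\partial_t+\mathcal L_X)\psi\,dt+\sqrt\delta\, M_\tau+\delta N_\tau ,
\]
where $M$ is the martingale driven by $W^{\rm y},W^{\rm c}$ and $N$ is the one driven by $W^{\rm x}$. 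Optional stopping kills the martingale terms in expectation. The boundary and $\mathcal L_X$ terms are $O(\delta)$ by moment bounds on $Y^\delta$ that are uniform in $\delta$ (from the $q'$-moment of $m_0^Y$ and the dissipativity in (iv)) and on $X^\delta$. The $\partial_t\psi$ term is the delicate one, because $\psi$ is only $1/2$-H\"older in $t$. I would handle it as in \cite{rockner2021averaging}, Lemma 4.1: mollify $\psi$ in $t$ at scale $\eta$, which costs $C\eta^{1/2}$ on the $\Phi$ side and $C\delta\eta^{-1/2}$ on the $\partial_t$ side. Choosing $\eta=\delta$ gives the bound $C\sqrt\delta$.

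\textbf{Part 2 and the main obstacle.} The controlled case goes the same way, with $\tau=T$ and $\mathcal L_X$ including the bounded relaxed drift $\int_A b_2\,\Lambda_t(da)$. Since $b_2$ is bounded, the estimates are uniform in $\Lambda$. The main obstacle is the time regularity needed in Step 2. Handling $t\mapsto\langle\hat f_1,m^\star_t\rangle$ and the $\partial_t\psi$ term through the mollification argument is where the $\sqrt\delta$ rate is won or lost.
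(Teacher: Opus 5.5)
\paragraph{Gap: time regularity of $\hat F$ in Part 1.}
Your Step 1 is fine. So is your Part 2, since there $t\mapsto \mathbb E[\hat f_1(X^{0,\Lambda^\star}_t)]$ is Lipschitz, because $\bar b_1$, $b_2$ and $\sigma$ are bounded. The gap is in Part 1, at the point you flagged yourself: the time regularity of $\hat F_t:=\langle \hat f_1,m^\star_t\rangle=\mathbb E[\hat f_1(X^0_t)\mathbf 1_{(t,T]}(\tau^\star)]$.

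Your own It\^o computation gives $|\hat F_t-\hat F_s|\le C|t-s|+C\,\mathbb P(s<\tau^\star\le t)$. H\"older-$1/2$ continuity would therefore require the distribution function of $\tau^\star$ to be H\"older-$1/2$. Absence of atoms only gives continuity, with no modulus.

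Atoms also cannot be excluded. If $G(t,x,\beta)=-K|t-t_0|$ with $K>\sup|f|$, every agent stops at $t_0$ in the effective equilibrium, and $\hat F$ jumps at $t_0$. Your fallback of keeping only measurability in $t$ then breaks Step 2. It\^o's formula for $\psi(t,X^\delta_t,Y^\delta_t)$ needs $\partial_t\psi$ or a quantitative substitute. For merely measurable, or merely continuous, time dependence the trade-off $C\eta^{1/2}+C\delta\eta^{-1/2}$ is not available. So as written, the proposal does not give $C\sqrt\delta$ for a general effective equilibrium.

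\paragraph{Missing idea: $\hat F$ has bounded variation.}
What always holds is that $\hat F$ is of bounded variation. For $\psi\in C^1([0,T])$, optional sampling and $\hat f_1\in C^2_b$ give $\bigl|\int_0^T\hat F_t\psi'(t)\,dt\bigr|\le C\|\psi\|_\infty$. This is the regularity the paper exploits, and it does so without any Poisson equation for the product.

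The paper writes the integral as $\int_0^\tau F^2(t,X^0_t,\hat F_t)\,dH^\delta_t$, where $H^\delta_t=\int_0^t(F^1-\bar F^1)(s,X^\delta_s,Y^\delta_s)\,ds$. It integrates by parts and expands $dF^2(t,X^0_t,\hat F_t)$ by It\^o's formula, including the $d\hat F^c$ term and the sum over the jumps of $\hat F$. Every resulting term is then bounded by $\mathbb E[\sup_t|H^\delta_t|^2]^{1/2}\le C\sqrt\delta$ (Lemma \ref{lem:supL2_H}) times a constant involving $\mathrm{TV}(\hat F)$. Your Step 1 is not needed on that route.

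\paragraph{How to repair your route.}
Since $\mathcal L_Y$ acts only on $y$, the corrector factorizes as $\psi=\phi\,u$ with $\mathcal L_Y u=-h$. The H\"older-in-time difficulty then concerns only $u$, through $F^1$, where your mollification argument applies. The function $\hat F$ enters It\^o's formula as a deterministic c\`adl\`ag process of bounded variation. The additional terms are of two kinds, with $u$ evaluated at $(t,X^\delta_t,Y^\delta_t)$ throughout:
\begin{itemize}
\item $\delta\int_0^\tau u\,\cdot\,(\text{derivative of }F^2\text{ in its third argument})\,d\hat F^c_t$;
\item $\delta\sum_{t\le\tau}u\,\bigl(F^2(t,X^\delta_t,\hat F_t)-F^2(t,X^\delta_t,\hat F_{t-})\bigr)$.
\end{itemize}
Both are $O(\delta\,\mathrm{TV}(\hat F))$. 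Only fixed-time moments of $Y^\delta$ are needed for this, because the jump times of $\hat F$ are deterministic.
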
  
\begin{proof}${}$\\
\emph{Part 1.}\quad  Let $\hat F_t: =\langle \hat f_1,m^\star_t\rangle = \mathbb E[\hat f_1(X^0_t) \mathbf 1_{(t,T]}(\tau^\star)]$. Similarly to \cite{1Bouveret} (see Lemma 3.3), it can be proved that $\hat{F}$ is a function of bounded variation on $[0,T]$. Indeed, let $\psi \in C^1([0,T])$. Then, by integration by parts, 
\begin{align*}
  &\int_0^T \hat F_t \psi'(t) dt =   \mathbb E\left[\int_0^{\tau^\star} \hat f_1(X^0_t) \psi'(t) dt\right] = \mathbb E\left[\hat f_1(X^0_{\tau^\star})\psi(\tau^\star) - \hat f_1(X^0_0)\psi(0)-\int_0^{\tau^\star} \psi(t) d\hat f_1(X^0_t) \right] \\
  & = \mathbb E\left[\hat f_1(X^0_{\tau^\star})\psi(\tau^\star) - \hat f_1(X^0_0)\psi(0)-\int_0^{\tau^\star} \psi(t) \hat f'_1(X^0_t) \bar b(t,X^0_t) dt -\frac{1}{2}\int_0^{\tau^\star} \psi(t) \hat f^{\prime\prime}_1(X^0_t) \sigma^2(t,X^0_t) dt\right]\\ &\leq C \|\psi\|_\infty
\end{align*}
by our assumptions on $\hat f_1$. 

Using the notation of Lemma \ref{lem:supL2_H} and performing integration by parts, we have:
\begin{align*}
  \int_0^\tau F^2(t,X^0_t,\hat F_t)dH^\delta_t &=F^2(\tau,X^0_\tau,\hat F_\tau) H^\delta_\tau  - \int_0^\tau H^\delta_t F^2_t(t,X^0_t,\hat F_t) dt \\&- \int_0^\tau H^\delta_t F^2_x(t,X^0_t,\hat F_t)\bar b(t,X^0_t) dt -\int_0^\tau H^\delta_t F^2_x(t,X^0_t,\hat F_t)\sigma(t,X^0_t) dW^{\mathrm x}_t  \\&- \frac{1}{2}\int_0^\tau H^\delta_t F^2_{xx}(t,X^0_t,\hat F_t) \sigma^2(t,X^0_t) dt - \int_0^\tau H^\delta_t F^2_f(t,X^0_t,\hat F_t) d\hat F^c_t\\
   &- \sum_{0\leq t\leq \tau: \Delta \hat F_t \neq 0} H^\delta_t \{F^2(t,X^0_t,\hat F_t)-F^2(t,X^0_t,\hat F_{t-})\},
\end{align*}
where $F^2_f$ denotes the derivative with respect to the third argument, and $\hat F^c$ is the continuous component of $F^c$.  Since by Assumption  \ref{hyp:Cost Poisson OS}, $F^2 \in C^{1,2,1}_b$, the terms in the first line are bounded by $C\sqrt{\delta}$ for some $C<\infty$ by Lemma \ref{lem:supL2_H}. For the first term in the second line, we have, also using Lemma \ref{lem:supL2_H}:
\begin{align*}
  \mathbb E\left[\left| \int_0^\tau H^\delta_t F^2_x(t,X^0_t,\hat F_t)\bar b(t,X^0_t) dt\right|\right]&\leq C \mathbb E\left[ \int_0^T \left| H^\delta_t \right|^2 dt\right]^{\frac{1}{2}} \mathbb E\left[ \int_0^T \left| \bar b(t,X^0_t) \right|^2dt\right]^{\frac{1}{2}}\\
  &\leq C \mathbb E\left[ \int_0^T \left| \bar b(t,X^0_t) \right|^2dt\right]^{\frac{1}{2}} \sqrt{\delta},
\end{align*}
which can be further estimated using uniform Lipschitz bounds on $\bar b$. For the second term in the second line, we write:
$$
\mathbb E\left[\left| \int_0^\tau H^\delta_t F^2_x(t,X^0_t,\hat F_t) \sigma(t,X^0_t) dW^{\mathrm x}_t\right|\right]\leq \mathbb E\left[ \int_0^T |H^\delta_t|^2 |F^2_x(t,X^0_t,\hat F_t)|^2 |\sigma(t,X^0_t)|^2 dt\right]^{\frac{1}{2}},
$$
which we estimate as above, using the uniform bound on $\sigma$. The first term in the third line is treated similarly. For the second term in the third line, we write
$$
\mathbb E\left[\left|\int_0^\tau H^\delta_t F^2_f(t,X^0_t,\hat F_t) d\hat F^c_t\right|\right] \leq \mathbb E\left[ \|H^\delta_t\|_{\infty} \|F^2_f\|_{\infty} TV_{[0,T]}(\hat F^c)\right]<C \sqrt{\delta},
$$
using the properties of $F^2$, the bounded variation of $\hat F$ and Lemma  \ref{lem:supL2_H}. The sum in the 4th line is estimated similarly. \\

\noindent\emph{Part 2.}\quad  Now let $\hat F_t: =\langle \hat{f}_1, m_t^{\star(\text{x})} \rangle = \mathbb E[\hat f_1(X^{0,\Lambda^*}_t) ]$. By integration by parts, for $\psi \in C^1([0,T])$, 
\begin{align*}
  &\int_0^T \hat F_t \psi'(t) dt =   \mathbb E\left[\int_0^T \hat f_1(X^{0,\Lambda^*}_t) \psi'(t) dt\right] = \mathbb E\left[\hat f_1(X^{0,\Lambda^*}_{T})\psi(T) - \hat f_1(X^{0,\Lambda^*}_0)\psi(0)-\int_0^{T} \psi(t) d\hat f_1(X^{0,\Lambda^*}_t) \right] \\
  & = \mathbb E\left[\hat f_1(X^{0,\Lambda^*}_{T})\psi(T) - \hat f_1(X^{0,\Lambda^*}_0)\psi(0)-\int_0^{T} \psi(t) \hat f'_1(X^{0,\Lambda^*}_t) \bar b_1(t,X^{0,\Lambda^*}_t) dt\right]\\ &+\mathbb E\left[-\int_0^{T} \psi(t) \hat f'_1(X^{0,\Lambda^*}_t) \int_A b_2(X^{0,\Lambda^*}_t,a)\Lambda^*_t(da) dt -\frac{1}{2}\int_0^{T} \psi(t) \hat f^{\prime\prime}_1(X^{0,\Lambda^*}_t) \sigma^2(t,X^{0,\Lambda^*}_t) dt\right]\\ &\leq C \|\psi\|_\infty
\end{align*}
by our assumptions on $\hat f_1$ and the coefficients of $X$. Moreover, the dominated convergence theorem implies that $\hat F$ is continuous. 

Performing integration by parts as in the first part of the proof, we then have:
\begin{align*}
  \int_0^T F^2(t,X^{0,\Lambda^*}_t,\hat F_t)dH^\delta_t &=F^2(T,X^{0,\Lambda^*}_T,\hat F_T) H^\delta_T  - \int_0^T H^\delta_t F^2_t(t,X^{0,\Lambda^*}_t,\hat F_t) dt \\&- \int_0^T H^\delta_t F^2_x(t,X^{0,\Lambda^*}_t,\hat F_t)\bar b_1(t,X^{0,\Lambda^*}_t) dt\\ &- \int_0^T H^\delta_t F^2_x(t,X^{0,\Lambda^*}_t,\hat F_t)\int_A b_2(X^{0,\Lambda^*}_t,a)\Lambda_t^*(da) dt \\ &-\int_0^T H^\delta_t F^2_x(t,X^{0,\Lambda^*}_t,\hat F_t)\sigma(t,X^{0,\Lambda^*}_t) dW^{\mathrm x}_t  \\&- \frac{1}{2}\int_0^T H^\delta_t F^2_{xx}(t,X^{0,\Lambda^*}_t,\hat F_t) \sigma^2(t,X^{0,\Lambda^*}_t) dt - \int_0^T H^\delta_t F^2_f(t,X^{0,\Lambda^*}_t,\hat F_t) d\hat F_t
\end{align*}
The proof now follows as in the first part using Lemma \ref{lem:supL2_H}. 

\end{proof}

\begin{lemma}[Sup--norm $L^2$ bound for centered fast fluctuations]
\label{lem:supL2_H}
Let $\Lambda \in \mathcal{A}^{\mathcal{R}}(\mathbb{F}^{\bm{W}})$ and let $(X_t^{\delta,\Lambda},Y_t^\delta)_{t\in[0,T]}$ follow the controlled dynamics \eqref{eq:relaxed}. 
Let $F^1:[0,T]\times \mathbb R\times\R\to\R$ be such that $F^1(\cdot,\cdot,y) \in C^{1/2,1}_b$ for all $y\in \R$ and
$$
|F^1(t,x,y_1) - F^1(t,x,y_2)|\leq |y_1-y_2| (1+|y_1|^m + |y_2|^m)
$$
for some $m>0$ and all $y_1,y_2\in \mathbb R$. 
Define
\[
H_t^\delta := \int_0^t \Big(F^1(s,X_s^{\delta,\Lambda},Y_s^\delta)-\bar F^1(s,X_s^{\delta,\Lambda})\Big)\,ds,
\qquad t\in[0,T].
\]
Then there exists a constant $C>0$ independent of $\delta\in(0,1]$ such that
\[
\E\Big[\sup_{0\le t\le T}|H_t^\delta|^2\Big]\le C\,\delta.
\]
\end{lemma}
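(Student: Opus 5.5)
The plan is to use the Poisson equation method for the fast process, as in \cite{rockner2021averaging}. Set $\Phi(t,x,y):=F^1(t,x,y)-\bar F^1(t,x)$, which is centered with respect to $\bar\mu$ for each $(t,x)$. We solve the Poisson equation in the fast variable,
\[
\mathcal L_Y \psi(t,x,\cdot)(y) = -\Phi(t,x,y), \qquad \int_{\mathbb R}\psi(t,x,y)\,\bar\mu(dy)=0,
\]
with $(t,x)$ as parameters. Under Assumption \ref{hyp:Dynamics_Poisson_X_Y--c}(iii)--(v) (uniform ellipticity, dissipativity $yB(y)\to-\infty$, $C^1_b$ coefficients), the one-dimensional explicit representation, or the results quoted from \cite{rockner2021averaging}, should give a solution $\psi$ with polynomial growth in $y$: $|\psi|+|\partial_y\psi|\le C(1+|y|^{m'})$. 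The same bounds should hold for $\partial_x\psi$, $\partial_{xx}\psi$ and for $1/2$-H\"older regularity in $t$, inherited from $F^1(\cdot,\cdot,y)\in C^{1/2,1}_b$ and the polynomial Lipschitz bound in $y$. Proposition \ref{pro:E invariant m estimate} and standard uniform-in-$\delta$ moment bounds for $Y^\delta$, since $m_0^Y\in\mathcal P_2$ (and possibly higher moments obtained through the Lyapunov function $|y|^p$), make all polynomial factors integrable.

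Next, apply It\^o's formula to $\delta\,\psi(t,X^{\delta,\Lambda}_t,Y^\delta_t)$. The $1/\delta$ generator term produces exactly $-\Phi\,dt$, so
\[
H^\delta_t=\delta\big(\psi(0,X_0,Y_0)-\psi(t,X_t,Y_t)\big)+\delta\!\int_0^t(\partial_s+\mathcal L_X^{\Lambda})\psi\,ds+\delta\!\int_0^t\partial_x\psi\,\sigma\,dW^{\rm x}_s+\sqrt\delta\!\int_0^t\partial_y\psi\,(\Sigma\,dW^{\rm y}_s+\hat\Sigma\,dW^{\rm c}_s).
\]
There is no cross-variation term, because $W^{\rm x}$ is independent of $(W^{\rm y},W^{\rm c})$. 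Here $\mathcal L^{\Lambda}_X$ uses the drift $b_1+\int_A b_2\,\Lambda_t(da)$, which is bounded by $C(1+|y|)$ uniformly in $\Lambda$ since $b_2$ is bounded.

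We then take $\sup_t$ and second moments. The boundary terms require $\delta^2\,\mathbb E[\sup_t|\psi(t,X_t,Y_t)|^2]\le \delta^2\,\mathbb E[\sup_t(1+|Y^\delta_t|^{2m'})]$. This is the main obstacle, since $\sup_t|Y^\delta_t|$ for a fast process grows as $\delta\to0$, only logarithmically or like $\delta^{-\epsilon}$. We handle it with the standard estimate $\mathbb E\sup_{t\le T}|Y^\delta_t|^p\le C_p\,\delta^{-1/2}$ or so, obtained by applying It\^o to $|Y|^p$ with BDG, which leaves $\delta^{2-1/2}\le\delta$. The Lebesgue term is $O(\delta^2)$ by Cauchy--Schwarz and the moment bounds. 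By BDG, the $W^{\rm x}$-martingale is $O(\delta^2)$ and the fast martingale is $\delta\,\mathbb E\int_0^T|\partial_y\psi|^2(\Sigma^2+\hat\Sigma^2)\,ds\le C\delta$. Summing gives $\mathbb E[\sup_t|H^\delta_t|^2]\le C\delta$.

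If the time regularity of $\psi$ is only H\"older, so that $\partial_t\psi$ does not exist, we mollify $F^1$ in $t$ at scale $\eta$. The mollification error is $O(\eta^{1/2})$ in sup norm, the $\partial_t$ term costs $\delta\eta^{-1/2}$, and choosing $\eta=\delta$ keeps the total at $O(\delta)$ after squaring.
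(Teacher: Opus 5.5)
Your proposal follows essentially the same route as the paper. Both use a centered Poisson corrector in the fast variable, with polynomial bounds taken from the proof of Lemma 4.1 in \cite{rockner2021averaging}, then It\^o's formula for $\delta\,u(t,X^{\delta,\Lambda}_t,Y^\delta_t)$ and BDG: the $\sqrt{\delta}$-martingales driven by $W^{\mathrm y}$ and $W^{\mathrm c}$ give the leading $C\delta$, and the remaining terms are $O(\delta^2)$ up to moment factors.

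You are more careful than the paper on the boundary term. The paper asserts $\sup_{\delta}\E[\sup_{s\le T}|Y^\delta_s|^p]<\infty$, which fails for a fast ergodic process, and your observation that any growth up to order $\delta^{-1}$ is absorbed by the $\delta^2$ prefactor is the right fix. One correction: the global It\^o/BDG argument on $|Y^\delta|^p$ yields $C_p\delta^{-1}$ because of the $\delta^{-1}$ drift, not $\delta^{-1/2}$, but this still suffices.
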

\begin{proof}

Set
\[
\Phi(t,x,y):=F^1(t,x,y)-\bar F^1(t,x), \qquad (t,x,y)\in [0,T]\times \mathbb R\times \mathbb R.
\]
By construction, $\int_{\mathbb R}\Phi(t,x,y)\bar\mu(dy)=0$ for every $(t,x)\in [0,T]\times \mathbb R$. We now follow the Poisson-equation argument used in the proof of Lemma 4.1 in \cite{rockner2021averaging}. More precisely, for every $(t,x)$ there exists a centered solution $u(t,x,\cdot)$ of
\[
\mathcal L_Y u(t,x,\cdot)=\Phi(t,x,\cdot), \qquad \int_{\mathbb R}u(t,x,y)\bar\mu(dy)=0,
\]
and the proof of \cite[Lemma 4.1]{rockner2021averaging} yields polynomial-growth bounds on $u$ and its derivatives in the variables that appear below. In particular, for some integer $\ell\geq 1$ and some constant $C>0$ independent of $(t,x,y)$,
\begin{equation}
\label{eq:altproof-corrector-bound}
|u(t,x,y)|+|\partial_t u(t,x,y)|+|\partial_x u(t,x,y)|+|\partial_{xx}u(t,x,y)|+|\partial_y u(t,x,y)|
\leq C(1+|y|^\ell).
\end{equation}

Next, let
\[
\tilde b^\Lambda(t,x,y,\omega):=b_1(t,x,y)+\int_A b_2(x,a)\Lambda_t(da)(\omega).
\]
Since $b_2$ is bounded by Assumption \ref{hyp:Dynamics_Poisson_X_Y--c} and $\Lambda_t\in\mathcal P(A)$, the control term is uniformly bounded in $(t,\omega)$, and therefore the usual moment estimates for \eqref{eq:relaxed} imply that for every $p\geq 2$,
\begin{equation}
\label{eq:altproof-moment-bound}
\sup_{\delta\in(0,1]}\E\left[\sup_{0\leq s\leq T}|X_s^{\delta,\Lambda}|^p+\sup_{0\leq s\leq T}|Y_s^\delta|^p\right]<\infty.
\end{equation}
In particular, the constants in these estimates are uniform in the admissible relaxed control $\Lambda$.

Applying It\^o's formula to the process $u(s,X_s^{\delta,\Lambda},Y_s^\delta)$ and using the identity $\mathcal L_Yu=\Phi$, we obtain for every $t\in[0,T]$,
\begin{align*}
\int_0^t \Phi(s,X_s^{\delta,\Lambda},Y_s^\delta)\,ds
&= \delta\Big(u(t,X_t^{\delta,\Lambda},Y_t^\delta)-u(0,X_0,Y_0)\Big)
\\
&\quad -\delta\int_0^t \Big(\partial_su+\tilde b^\Lambda(s,X_s^{\delta,\Lambda},Y_s^\delta)\partial_xu+\tfrac12\sigma^2(s,X_s^{\delta,\Lambda})\partial_{xx}u\Big)(s,X_s^{\delta,\Lambda},Y_s^\delta)\,ds
\\
&\quad -\delta\int_0^t \sigma(s,X_s^{\delta,\Lambda})\partial_xu(s,X_s^{\delta,\Lambda},Y_s^\delta)\,dW_s^{\mathrm x}
\\
&\quad -\sqrt{\delta}\int_0^t \Sigma(Y_s^\delta)\partial_yu(s,X_s^{\delta,\Lambda},Y_s^\delta)\,dW_s^{\mathrm y}
-\sqrt{\delta}\int_0^t \hat\Sigma(Y_s^\delta)\partial_yu(s,X_s^{\delta,\Lambda},Y_s^\delta)\,dW_s^{\mathrm c}.
\end{align*}
Recalling the definition of $H_t^\delta$, this can be rewritten as
\[
H_t^\delta=R_t^{\delta,1}+R_t^{\delta,2}+M_t^{\delta,\mathrm x}+M_t^{\delta,\mathrm y}+M_t^{\delta,\mathrm c},
\]
with the obvious notation.

We estimate these terms separately. First, by \eqref{eq:altproof-corrector-bound}, \eqref{eq:altproof-moment-bound}, and the fact that $\delta\leq 1$,
\begin{align*}
\E\Big[\sup_{0\leq t\leq T}|R_t^{\delta,1}|^2\Big]
&\leq C\delta^2 \E\Big[\sup_{0\leq t\leq T}(1+|Y_t^\delta|^{2\ell})\Big]
\leq C\delta^2,
\\
\E\Big[\sup_{0\leq t\leq T}|R_t^{\delta,2}|^2\Big]
&\leq C\delta^2 \E\Big[\Big(\int_0^T (1+|Y_s^\delta|^\ell)\,ds\Big)^2\Big]
\leq C\delta^2.
\end{align*}
The boundedness of the control term enters only through $\tilde b^\Lambda$, hence the same constant $C$ works uniformly in $\Lambda$.

For the martingale driven by $W^{\mathrm x}$, the Burkholder--Davis--Gundy inequality, the boundedness of $\sigma$, and again \eqref{eq:altproof-corrector-bound}--\eqref{eq:altproof-moment-bound} give
\begin{align*}
\E\Big[\sup_{0\leq t\leq T}|M_t^{\delta,\mathrm x}|^2\Big]
&\leq C\delta^2 \E\left[\int_0^T |\sigma(s,X_s^{\delta,\Lambda})|^2 |\partial_xu(s,X_s^{\delta,\Lambda},Y_s^\delta)|^2\,ds\right]
\leq C\delta^2.
\end{align*}
Similarly, for the fast martingale terms,
\begin{align*}
\E\Big[\sup_{0\leq t\leq T}|M_t^{\delta,\mathrm y}|^2\Big]
&\leq C\delta \E\left[\int_0^T |\Sigma(Y_s^\delta)|^2 |\partial_yu(s,X_s^{\delta,\Lambda},Y_s^\delta)|^2\,ds\right]
\leq C\delta,
\\
\E\Big[\sup_{0\leq t\leq T}|M_t^{\delta,\mathrm c}|^2\Big]
&\leq C\delta \E\left[\int_0^T |\hat\Sigma(Y_s^\delta)|^2 |\partial_yu(s,X_s^{\delta,\Lambda},Y_s^\delta)|^2\,ds\right]
\leq C\delta.
\end{align*}
Combining the above bounds and using $(a_1+\cdots+a_5)^2\leq 5(a_1^2+\cdots+a_5^2)$, we conclude that
\[
\E\Big[\sup_{0\leq t\leq T}|H_t^\delta|^2\Big]\leq C\delta.
\]
This proves the claim.

\end{proof}

\section{Acknowledgement}
This work has been carried out at the Energy4Climate Interdisciplinary Center (E4C) of IP Paris, which is in part supported by 3rd Programme d'Investissements d'Avenir [ANR-18-EUR-0006-02], and by the Foundation of Ecole Polytechnique (Chaire ``Décarboner l'économie'' financed by BNP Paribas).

\end{document}